\theoremstyle{plain}
\newtheorem{thm}{Theorem}[section]
\newtheorem{lem}[thm]{Lemma}
\newtheorem{cor}[thm]{Corollary}
\newtheorem{prop}[thm]{Proposition}
\newtheorem*{thm1}{Theorem 1}
\newtheorem*{thm2}{Theorem 2}
\theoremstyle{definition}
\newtheorem{ex}{Example}
\def\COMMENT#1{$^{<\the\commentno>}$%
     \vadjust{\vbox to 0pt{\vss\vskip-8pt\rightline{%
     \rlap{\hbox{\hskip7mm \vbox{\pretolerance=-1
     \doublehyphendemerits=0 \finalhyphendemerits=0
     \hsize40mm\tolerance=10000\eightpoint
     \lineskip=0pt\lineskiplimit=0pt
     \rightskip=0pt plus16mm\baselineskip8pt\noindent
     \hskip0pt       %(without this, the first word is never hyphenated!)
     {$\langle$\the\commentno. #1$\rangle$}\endgraf}}}}\vss}}%
     \global\advance\commentno by1}%
\def\writecommentsasfootnotes{%
 \def\COMMENT{\global\advance\commentno by1\footnote{$^{<\the\commentno>}$}}%
 }
\def\nocomments{\def\COMMENT##1{}}
\def\?#1{\vadjust{\vbox to 0pt{\vss\vskip-8pt\leftline{%
     \llap{\hbox{\vbox{\pretolerance=-1
     \doublehyphendemerits=0\finalhyphendemerits=0
     \hsize16truemm\tolerance=10000\small
     \lineskip=0pt\lineskiplimit=0pt
     \rightskip=0pt plus16truemm\baselineskip8pt\noindent
     \hskip0pt        %(without this, the first word is never hyphenated!)
     #1\endgraf}\hskip7truemm}}}\vss}}}
\newenvironment{txteq}
  {
    \begin{equation}
    \begin{minipage}[c]{0.85\textwidth} % set width to 0.9 x textwidth
    \em                                % switch on emph
  }
  {\end{minipage}\end{equation}\ignorespacesafterend}
\newenvironment{txteq*}
  {
    \begin{equation*}
    \begin{minipage}[c]{0.85\textwidth} % set width to 0.9 x textwidth
    \em                                % switch on emph
  }
  {\end{minipage}\end{equation*}\ignorespacesafterend}
\def\specrel#1#2{\mathrel{\mathop{\kern0pt #1}\limits_{#2}}}
\def\Specrel#1#2{\mathrel{\mathop{\kern0pt #1}\limits^{#2}}}
\newcommand{\N}{\ensuremath{\mathbb{N}}}
\newcommand{\Lra}{\ensuremath{\Leftrightarrow}}
\newcommand{\sm}{\ensuremath{\smallsetminus}}
\newcommand{\Aut}{\textnormal{Aut}}
\newcommand{\closure}[1]{\overline{#1}}
\newcommand{\es}{\ensuremath{\emptyset}}
\newcommand{\sub}{\subseteq}
\newcommand{\cB}{\ensuremath{\mathcal B}}
\newcommand{\cE}{\ensuremath{\mathcal E}}
\newcommand{\cI}{\ensuremath{\mathcal I}}
\newcommand{\cN}{\ensuremath{\mathcal N}}
\newcommand{\cP}{\ensuremath{\mathcal P}}
\newcommand{\cR}{\ensuremath{\mathcal R}}
\newcommand{\cS}{\ensuremath{\mathcal S}}
\newcommand{\cT}{\ensuremath{\mathcal T}}
\newcommand{\cV}{\ensuremath{\mathcal V}}
\newcommand{\cX}{\ensuremath{\mathcal X}}
\def\td{tree-decom\-po\-si\-tion}
\newcommand{\csepn}[4]{\ensuremath{{(#1 \cap #2,}\, {#3 \cup #4)}}}
\newcommand{\sys}{separation system}
\newcommand{\sep}[2]{\ensuremath{{#1 \cap #2}}}
\newcommand{\sepn}[2]{\ensuremath{{(#1,#2)}}}
\newcommand{\AB}{\sepn AB}
\newcommand{\BA}{\sepn BA}
\newcommand{\CD}{\sepn CD}
\newcommand{\DC}{\sepn DC}
\newcommand{\EF}{\sepn EF}
\newcommand{\FE}{\sepn FE}
\newcommand{\XY}{\sepn XY}
\newcommand{\pre}{\ensuremath{\mbox{ is a predecessor of }}}
\newcommand{\minor}{\ensuremath{\preccurlyeq}}
\newcommand{\T}{\ensuremath{\cT(\cN)}}
\renewcommand{\cX}{\ensuremath{t}}
\newcommand{\alt}[2]{\left\{\begin{array}{l}#1\\#2\end{array} \right.}
\newcommand{\claim}[2]{\begin{enumerate}\item[#1] {\it #2} \end{enumerate}}
\renewcommand{\diamond}{{*}}
\title{Connectivity and tree structure\\ in finite graphs}
\date{20 March, 2013}
\author{J. Carmesin \and R. Diestel \and F. Hundertmark \and M. Stein\footnote{Supported by Fondecyt grant 11090141.}}
\begin{document}

\maketitle

\begin{abstract}\noindent
Considering systems of separations in a graph that separate every pair of a given set of vertex sets that are themselves not separated by these separations, we determine conditions under which such a separation system contains a nested subsystem that still separates those sets and is invariant under the automorphisms of the graph.

As an application, we show that the $k$-blocks~-- the maximal vertex sets that cannot be separated by at most $k$ vertices~-- of a graph~$G$ live in distinct parts of a suitable \td\ of~$G$ of adhesion at most~$k$, whose decomposition tree is invariant under the automorphisms of~$G$. This extends recent work of Dunwoody and Kr\"on and, like theirs, generalizes a similar theorem of Tutte for $k=2$.

Under mild additional assumptions, which are necessary, our decompositions can be combined into one overall \td\ that distinguishes, for all $k$ simultaneously, all the $k$-blocks of a finite graph.

\end{abstract}

\section{Introduction} \label{sec_intro}

Ever since graph connectivity began to be systematically studied, from about 1960 onwards, it has been an equally attractive and elusive quest to `decompose a $k$-connected graph into its $(k+1)$-connected components'. The idea was modelled on the well-known block-cutvertex tree, which for $k=1$ displays the global structure of a connected graph `up to 2-connectedness'. For general~$k$, the precise meaning of what those `$(k+1)$-connected components' should be varied, and came to be considered as part of the problem. But the aim was clear: it should allow for a decomposition of the graph into those `components', so that their relative structure would display some more global structure of the graph.\looseness=-1

While originally, perhaps, these `components' were thought of as subgraphs, it soon became clear that, for larger~$k$, they would have to be defined differently.
For $k=2$, Tutte~\cite{TutteGrTh} found a decomposition which, in modern terms,%
   \footnote{Readers not acquainted with the terminology of graph minor theory can skip the details of this example without loss. The main point is that those `torsos' are not subgraphs, but subgraphs plus some additional edges reflecting the additional connectivity that the rest of the graph provides for their vertices.}
   would be described as a \td\ of adhesion~2 whose torsos are either 3-connected or cycles.

For general~$k$, Robertson and Seymour~\cite{GMX} re-interpreted those `$(k+1)$-connected components' in a radically new (but interesting) way as `tangles of order~$k+1$'. They showed, as a cornerstone of their theory on graph minors, that every finite graph admits a \td\ that separates all its maximal tangles, regardless of their order, in that they inhabit different parts of the decomposition. Note that this solves the modified problem for all $k$ simultaneously, a feature we shall achieve also for the original problem.

More recently still, Dunwoody and Kr\"on~\cite{DunwoodyKroenArXiv}, taking their lead directly from Tutte (and from Dunwoody's earlier work on tree-structure induced by edge-cuts~\cite{CuttingUpGraphs}), followed up Tutte's observation that his result for $k=2$ can alternatively be described as a tree-like decomposition of a graph $G$ into cycles and vertex sets that are `2-inseparable': such that no set of at most~2 vertices can separate any two vertices of that set in~$G$. Note that such `$k$-inseparable' sets of vertices, which were first studied by Mader~\cite{mader78}, differ markedly from $k$-connected subgraphs, in that their connectivity resides not on the set itself but in the ambient graph. For example, joining $r>k$ isolated vertices pairwise by $k+1$ independent paths of length~2, all disjoint, makes this set into a `$k$-block', a maximal $k$-inseparable set of vertices.%
   \COMMENT{}
This then plays an important structural (hub-like) role for the connectivity of the graph, but it is still independent.%
   \COMMENT{}

External connectivity of a set of vertices in the ambient graph had been considered before in the context of \td s and tangles~\cite{excludedGrid, ReedConnectivityMeasure}.%
   \COMMENT{}
   But it was Dunwoody and Kr\"on who realized that $k$-inseparability can serve to extend Tutte's result to $k>2$: they showed that the $k$-blocks of a finite $k$-connected graph can, in principle, be separated canonically in a tree-like way~\cite{DunwoodyKroenArXiv}. We shall re-prove this in a simpler and stronger form,%
   \COMMENT{}
   extend it to graphs of arbitrary connectivity, and cast the `tree-like way' in the standard form of \td s. We show in particular that every finite graph has a canonical \td\ of adhesion at most~$k$ such that distinct $k$-blocks are contained in different parts (Theorem~1); this appears to solve the original problem for fixed $k$ in a strongest-possible way. For graphs whose $k$-blocks have size at least~$3k/2$ for all~$k$, a~weak but necessary additional assumption, these decompositions can be combined into one unified \td\ that distinguishes all the blocks of the graph, simultaneously for all~$k$ (Theorem~2).

Our paper is independent of the results stated in~\cite{DunwoodyKroenArXiv}.%
   \footnote{The starting point for this paper was that, despite some effort, we were unable to verify some of the results claimed in~\cite{DunwoodyKroenArXiv}.}%
   \COMMENT{}
   Our approach will be as follows. We first develop a more general theory of separation systems to deal with the following abstract problem. Let \cS\ be a set of separations in a graph, and let \cI\ be a collection of \emph{\cS-inseparable} sets of vertices, sets which, for every separation $\AB\in\cS$, lie entirely in $A$ or entirely in~$B$. Under what condition does \cS\ have a \emph{nested} subsystem \cN\ that still separates all the sets in~\cI? In a further step we show how such nested separation systems \cN\ can be captured by \td s.%
   \footnote{It is easy to see that \td s give rise to nested separation systems. The converse is less clear.}

The gain from having an abstract theory of how to extract nested subsystems from a given separation system is its flexibility. For example, we shall use it in~\cite{profiles} to prove that every finite graph has a canonical (in the sense above) \td\  separating all its maximal tangles. This improves on the result of Robertson and Seymour~\cite{GMX} mentioned earlier, in that their decomposition is not canonical in our sense: it depends on an assumed vertex enumeration to break ties when choosing which of two crossing separations should be picked for the nested subsystem. The choices made by our decompositions will depend only on the structure of the graph. In particular, they will be invariant under its automorphisms, which thus act naturally also on the set of parts of the decomposition and on the associated decomposition tree.

To state our main results precisely, let us define their terms more formally. In addition to the terminology explained in~\cite{DiestelBook10noEE} we say that a set $X$ of vertices in a graph~$G$ is \emph{$k$-inseparable} in~$G$ if $|X|>k$%
   \COMMENT{}
   and no set $S$ of at most $k$ vertices separates two vertices of $X\sm S$ in~$G$. A~maximal $k$-inseparable set of vertices is a \emph{$k$-block},%
   \footnote{For reasons of compatibility with tangles and other concepts in graph minor theory, we shall call this a `$(k+1)$-block' in future papers, rather than a `$k$-block'. Thus, in future, a {\it $k$-block\/} will be a maximal $(k-1)$-inseparable set of vertices: a maximal set of at least $k$ vertices such that no two of them are separated by $<k$ vertices in~$G$. The assertions of Theorems 1 and~2 below will change only in that the adhesion of their \td s will change to $<k$, and we encourage readers to use the new definition and adapted statements of Theorems 1 and~2 should they wish to cite them. However, for reading the current paper it will be perfect to keep with the old definition of a $k$-block to avoid confusion. See~\cite{ForcingBlocks} for examples of $k$-blocks.\looseness=-1}%
   \COMMENT{}
   or simply a {\em block\/}. The smallest $k$ for which a block is a $k$-block is the {\em rank} of that block; the largest such $k$ is its {\em order\/}.

The intersections $V_t\cap V_{t'}$ of `adjacent' parts in a \td\ $(\cT, \cV)$ of $G$ (those for which $tt'$ is an edge of~$\cT$) are the {\em adhesion sets\/} of~$(\cT, \cV)$; the maximum size of such a set is the {\em adhesion\/} of $(\cT, \cV)$. A~\td\ of adhesion at most~$k$%
   \COMMENT{}
   {\em distinguishes\/} two $k$-blocks $b_1,b_2$ of $G$ if they are contained in different parts, $V_{t_1}$ and~$V_{t_2}$ say. It does so {\em efficiently\/} if the $t_1$--$t_2$ path in the decomposition tree~$\cT$ has an edge $tt'$ whose adhesion set (which will separate $b_1$ from~$b_2$ in~$G$) has size $\kappa(b_1,b_2)$, the minimum size of a $b_1$--$b_2$ separator in~$G$. The \td\ $(\cT, \cV)$ is $\Aut(G)$-{\em invariant\/} if the automorphisms of $G$ act on the set of parts in a way that induces an action on the tree~\cT.%
   \COMMENT{}

\begin{thm1}
Given any integer $k\ge 0$, every finite graph $G$ has an $\Aut(G)$-invariant \td\ of adhesion at most~$k$ that efficiently distinguishes all its $k$-blocks.
\end{thm1}

Unlike in the original problem, the graph $G$ in Theorem~1 is not required to be $k$-connected. This is a more substantial improvement than it might seem. It becomes possible only by an inductive approach which refines, for increasing $\ell=0,1,\dots$, each part of a given \td\ of $G$ of adhesion at most~$\ell$ by a finer \td\ of adhesion at most~$\ell+1$, until for $\ell=k$ the desired decomposition is achieved. The problem with this approach is that, in general, a graph $G$~need not admit a unified \td\ that distinguishes its $\ell$-blocks for all $\ell\in\N$ simultaneously. Indeed, we shall see in Section~\ref{sec_kinsep} an  example where $G$ has two $\ell$-blocks separated by a unique separation of order at most~$\ell$, as well as two $(\ell+1)$-blocks separated by a unique separation of order at most~$\ell+1$, but where these two separations `cross': we cannot adopt both for the same \td\ of~$G$. The reason why this inductive approach nonetheless works for a proof of Theorem~1 is that we aim for slightly less there: at stage $\ell$ we only separate those $\ell$-blocks of $G$ that contain a $k$-block for the fixed $k$ given in the theorem, not all the $\ell$-blocks of~$G$.

However, there is a slight strengthening of the notion of a block that does make it possible to construct an overall \td\ separating all the blocks of a graph at once. We shall call such blocks {\em robust\/}. Their precise definition is technical and will be given later; it essentially describes the exact way in which the offending block of the above counterexample lies in the graph.%
   \footnote{Thus we shall prove that our counterexample is essentially the only one: all graphs not containing it have a unified \td\ distinguishing all their blocks.}
  In practice `most' blocks of a graph will be robust, including all $k$-blocks that are complete or have size at least $3k/2$. 

If all the blocks of a graph~$G$ are robust, how will they lie in the unified \td\ of $G$ that distinguishes them all? Some blocks (especially those of large order) will reside in a single part of this decomposition, while others (of smaller order) will inhabit a subtree consisting of several parts. Subtrees accommodating distinct $k$-blocks, however, will be disjoint. Hence for any fixed~$k$ we can contract them to single nodes, to reobtain the \td\ from Theorem~1 in which the $k$-blocks (for this fixed~$k$) inhabit distinct single parts. As $k$ grows, we thus have a sequence $\left(\cT_k,\cV_k\right)_{k\in\N}$ of \td s, each refining the previous, that gives rise to our overall \td\ in the last step of the sequence.

Formally, let us write $\left(\cT_m,\cV_m\right) \minor \left(\cT_n,\cV_n\right)$ for \td s $\left(\cT_m,\cV_m\right)$ and $\left(\cT_n,\cV_n\right)$ if the decomposition tree $\cT_m$ of the first is a minor of the decomposition tree~$\cT_n$ of the second, and a part $V_t\in\cV_m$ of the first  decomposition is the union of those parts $V_{t'}$ of the second whose nodes $t'$ were contracted to the node $t$ of~$\cT_m$.

\begin{thm2}
For every finite graph $G$ there is a sequence $\left(\cT_k,\cV_k\right)_{k\in\N}$ of tree-decompositions such that, for all~$k$,
\begin{enumerate}[\rm (i)]
\item $\left(\cT_k,\cV_k\right)$ has adhesion at most $k$ and distinguishes all robust $k$-blocks;%
   \COMMENT{}
\item $\left(\cT_k,\cV_k\right) \minor \left(\cT_{k+1},\cV_{k+1}\right)$;
\item $\left(\cT_k,\cV_k\right)$ is $\Aut(G)$-invariant.
\end{enumerate}
\end{thm2}

\noindent
The decomposition $\left(\cT_k,\cV_k\right)$ will in fact distinguish distinct robust $k$-blocks $b_1,b_2$ efficiently, by (i) for $k'= \kappa(b_1,b_2)$%
   \COMMENT{}
   and (ii). In Section~\ref{sec_kinsep} we shall prove Theorem~2 in a stronger form, which also describes how blocks of different rank or order are distinguished.

This paper is organized as follows. In Section~\ref{sec_nested} we collect together some properties of pairs of separations, either crossing or nested. In Section~\ref{sec_tree} we define a structure tree \cT\ associated canonically with a nested set of separations of a graph~$G$. In Section~\ref{sec_td} we construct a \td\ of~$G$ modelled on~\cT, and study its parts. In Section~\ref{sec_sys} we find conditions under which, given a set \cS\ of separations and a collection \cI\ of \cS-inseparable set of vertices, there is a nested subsystem of \cS\ that still separates all the sets in~\cI. In Section~\ref{sec_kinsep}, finally, we apply all this to the case of $k$-separations and $k$-blocks. We shall derive a central result, Theorem~\ref{thm_kinsep_sys}, which includes Theorems 1 and~2 as special cases.

\section{Separations} \label{sec_nested}
Let $G = (V,E)$ be a finite graph. A \emph{separation} of~$G$ is an ordered pair \AB~such that $A,B \sub V$ and $G[A] \cup G[B] = G$. A separation \AB\ is \emph{proper} if neither $A\sm B$ nor $B\sm A$ is empty. The \emph{order} of a separation \AB\ is the cardinality of its \emph{separator} \sep AB; the sets $A,B$ are its {\em sides\/}. A~separation of order $k$ is a \emph{$k$-separation}.

A separation \AB~\emph{separates} a set $I \sub V$ if $I$ meets both $A \sm B$ and $B \sm A$.
   \COMMENT{}
Two sets $I_0, I_1$ are \emph{weakly separated} by a separation \AB~if $I_i \sub A$ and $I_{1-i} \sub B$ for an $i \in 
\{0,1\}$. They are \emph{properly separated}, or simply \emph{separated}, by \AB\ if in addition neither $I_0$ nor $I_1$ is contained in \sep AB.

Given a set $\cS$ of separations, we call a set of vertices \emph{\cS-inseparable} if no separation in \cS~separates it. A maximal \cS-inseparable set of vertices is an \emph{\cS-block}, or simply a \emph{block} if \cS\ is fixed in the context.

\begin{lem}\label{block_lem}
Distinct \cS-blocks $b_1,b_2$ are  separated by some $\AB\in\cS$.
\end{lem}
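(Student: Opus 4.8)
The plan is to exploit the maximality of $b_1$ and $b_2$ together with the fact that they are $\cS$-inseparable, and to chase the definitions carefully. First I would note that, since $b_1$ and $b_2$ are distinct \emph{maximal} $\cS$-inseparable sets, neither contains the other; in particular $b_1\cup b_2$ properly contains $b_1$, so by maximality of $b_1$ the set $b_1\cup b_2$ fails to be $\cS$-inseparable. Hence there is a separation $\AB\in\cS$ that separates the set $b_1\cup b_2$, i.e.\ $b_1\cup b_2$ meets both $A\sm B$ and $B\sm A$. This $\AB$ will be the separation we want.

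Next I would locate $b_1$ and $b_2$ relative to $\AB$. Since $b_1$ is $\cS$-inseparable, $\AB$ does not separate $b_1$, so $b_1$ misses one of $A\sm B$, $B\sm A$; as $b_1\sub A\cup B$, this forces $b_1\sub A$ or $b_1\sub B$, and likewise $b_2\sub A$ or $b_2\sub B$. They cannot both be contained in $A$, since then $b_1\cup b_2\sub A$ would miss $B\sm A$, contradicting the choice of $\AB$; nor can both be contained in~$B$. So, after swapping $A$ and $B$ if necessary, I may assume $b_1\sub A$ and $b_2\sub B$, so that $b_1$ and $b_2$ are weakly separated by~$\AB$.

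It remains to upgrade this to a \emph{proper} separation, i.e.\ to check that neither $b_1$ nor $b_2$ is contained in $\sep AB$. Here I would use that $b_1\cup b_2$ meets $A\sm B$: since $b_2\sub B$ is disjoint from $A\sm B$, the witnessing vertex lies in $b_1$, so $b_1$ meets $A\sm B$ and hence $b_1\not\sub\sep AB$; symmetrically $b_2$ meets $B\sm A$ and $b_2\not\sub\sep AB$. Thus $\AB\in\cS$ separates $b_1$ and $b_2$, as claimed.

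I do not expect a real obstacle in this argument; it is essentially a definition chase. The one point that needs care is the distinction between a separation \emph{separating a single set} (meeting both of its exclusive sides) and \emph{separating two sets} (which additionally requires that neither set lie in the separator). The proof above handles this by observing that the required `properness' comes for free: it is forced by the fact that $b_1\cup b_2$ meets both $A\sm B$ and $B\sm A$ while each $b_i$ already sits on a single side.
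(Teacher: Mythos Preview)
Your proof is correct and follows essentially the same approach as the paper: take a separation $\AB\in\cS$ that separates the union $b_1\cup b_2$ (which exists by maximality and distinctness), observe that each $b_i$ lies in $A$ or in $B$, and conclude that $\AB$ separates $b_1$ from~$b_2$. The paper's proof is simply a terser version of yours; your extra paragraph verifying that neither $b_i$ lies in the separator $\sep AB$ makes explicit a step the paper leaves to the reader.
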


\begin{proof}
Since $b_1$ and $b_2$ are maximal \cS-inseparable sets, $b:=b_1 \cup b_2$ can be separated by some $\AB\in\cS$. Then $b\sm B\neq\emptyset\neq b\sm A$, but being \cS-inseparable, $b_1$~and $b_2$ are each contained in $A$ or~$B$. Hence $(A,B)$ separates $b_1$ from~$b_2$.
\end{proof}

A set of vertices is \emph{small} with respect to \cS\ if it is contained in the separator of some separation in \cS. If \cS\ is given from the context, we simply call such a set \emph{small}. Note that if two sets are weakly but not properly separated by some separation in \cS\ then at least one of them is small.

Let us look at how different separations of~$G$ can relate to each other. The set of all separations of $G$ is partially ordered by
\begin{equation}\label{def_PO}
\AB \le \CD\ :\Lra\ A\sub C\ \text{and}\ B \supseteq D.
\end{equation}
Indeed, reflexivity, antisymmetry and transitivity follow easily from the corresponding properties of set inclusion on~$\cP(V)$. Note that changing the order in each pair reverses the relation:
\begin{equation}\label{eq_order_flip}
\AB \le \CD\ \Lra\ \BA \ge  \DC.
\end{equation}
Let \CD\ be any proper separation.
\begin{txteq}\label{comp}
No proper separation \AB\ is $\le$-comparable with both \CD\ and~\DC. In particular, $\CD\not\le\DC$.
\end{txteq}
Indeed, if $\AB \le \CD$ and also $\AB\le\DC$, then $A\sub C\sub B$ and hence $A \sm B = \emptyset$, a contradiction. By~\eqref{eq_order_flip}, the other cases all reduce to this case by changing notation: just swap $(A,B)$ with $\BA$ or \CD\ or~\DC.%
   \COMMENT{}

\medbreak

The way in which two separations relate to each other can be illustrated by a \emph{cross-diagram} as in Figure \ref{fig:corners}. In view of such diagrams, we introduce the following terms for any set $\{\AB,\CD\}$ of two separations, not necessarily distinct. The set $A \cap B \cap C \cap D$ is their \emph{centre}, and \sep AC, \sep AD, \sep BC, \sep BD are their \emph{corners}. The corners \sep AC and \sep BD are \emph{opposite\/}, as are the corners \sep AD and \sep BC. Two corners that are not opposite are \emph{adjacent}. The {\em link} between two adjacent corners is their intersection minus the centre. A~corner minus its links and the centre is the \emph{interior} of that corner; the rest~-- its two links and the centre~-- are its \emph{boundary}. We shall write $\partial K$ for the boundary of a corner~$K$.

   \begin{figure}[htpb]
\centering%\vskip-6pt\vskip0pt
   	  \includegraphics{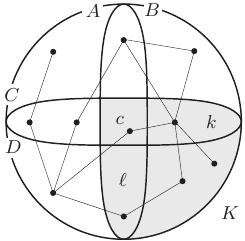}
   	  \caption{The cross-diagram $\{\AB,\CD\}$ with centre~$c$ and a~corner~$K$ and its links~$k,\ell$.}
   \label{fig:corners}\vskip-12pt\vskip0pt
   \end{figure}

A corner forms a separation of $G$ together with the union of the other three corners. We call these separations \emph{corner separations}. For example, \csepn ACBD\ (in this order)%
   \COMMENT{}
 is the corner separation for the corner \sep AC in~$\{\AB,\CD\}$.

The four corner separations of a cross-diagram compare with the two separations forming it, and with the inverses of each other, in the obvious way:
\begin{txteq}\label{cornerlessthanhalf}
Any two separations \AB, \CD\ satisfy $(A\cap C, B\cup D) \le \AB$.
\end{txteq}
 \vskip-1.5\baselineskip
\begin{txteq}\label{cornersnested}
If $\sepn IJ$ and $\sepn KL$ are distinct corner separations of the same cross-diagram, then $\sepn IJ\le\sepn LK$.
\end{txteq}%
   \COMMENT{}

\sloppy
Inspection of the cross-diagram for \AB\ and~\CD\ shows that ${\AB \le \CD}$ if and only if the corner \sep AD has an empty interior and empty links, i.e., the entire corner \sep AD is contained in the centre:
\begin{equation}\label{nested}
\AB \le \CD\ \Lra\ \sep AD \sub \sep BC.
\end{equation}
\fussy
Another consequence of $\AB\le\CD$ is that $\sep AB \sub C$ and ${\sep CD\sub B}$. So both separators live entirely on one side of the other separation.%
   \COMMENT{}

A separation \AB\ is \emph{tight} if every vertex of \sep AB has a neighbour in $A\sm B$ and another neighbour in~$B\sm A$. For tight separations, one can establish that $\AB\le\CD$ by checking only one of the two inclusions in~\eqref{def_PO}:
\begin{txteq}\label{tightorder}
If \AB\ and \CD\ are separations such that $A\sub C$ and \CD\ is tight, then $\AB\le\CD$.
\end{txteq}
Indeed, suppose $D\not\subseteq B$. Then as $A\sub C$, there is a vertex $x\in (C\cap D)\sm B$. As  \CD\ is tight, $x$ has a neighbour $y\in D\sm C$, but since $x\in A\sm B$ we see that $y\in A$. So $A\sm C\neq\emptyset$, contradicting our assumption.

\medbreak

Let us call \AB\ and \CD\ \emph{nested}, and write $\AB\|\CD$, if \AB\ is comparable with \CD\ or with~\DC\ under~$\le$. By~\eqref{eq_order_flip}, this is a symmetrical relation. For example, we saw in \eqref{cornerlessthanhalf} and~\eqref{cornersnested} that the corner separations of a cross-diagram are nested with the two separations forming it, as well as with each other.

Separations \AB\ and \CD\ that are not nested are said to~\emph{cross}; we then write $\AB\nparallel\CD$.

Nestedness is invariant under `flipping' a separation: if $\AB\|\CD$ then also $\AB\|\DC$, by definition of~$\|$, but also $\BA\|\CD$ by~\eqref{eq_order_flip}. Thus although nestedness is defined on the separations of~$G$, we may think of it as a symmetrical relation on the unordered pairs $\{A,B\}$ such that \AB\ is a separation.%
   \COMMENT{}

By~\eqref{nested}, nested separations have a simple description in terms of cross-diagrams:
\begin{txteq}\label{emptycorner}
Two separations are nested if and only if one of their four corners has an empty interior and empty links.
\end{txteq}
In particular:
\begin{txteq}\label{cross-separators}
Neither of two nested separations separates the separator of the other\rlap{.}
\end{txteq}
The converse of \eqref{cross-separators} fails only if there is a corner with a non-empty interior whose links are both empty.

Although nestedness is reflexive and symmetric, it is not in general transitive. However when transitivity fails, we can still say something:%
   \COMMENT{}

\begin{lem}\label{lem_nested_or}
If $\AB\|\CD$ and $\CD\|\EF$ but $\AB\nparallel\EF$, then \CD\ is nested with every corner separation of $\{\AB,\EF\}$, and for one corner separation $\sepn IJ$ we have either $\CD \le \sepn IJ$ or ${\DC \le \sepn IJ}$.
\end{lem}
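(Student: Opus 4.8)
The plan is to normalise the configuration using the symmetry of the statement under reversing separations, and then to read everything off the cross-diagram of $\{\AB,\EF\}$ via~\eqref{cornersnested}.

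First I would observe that reversing any one of $\AB$, $\CD$, $\EF$ (replacing it by its inverse) leaves both the hypotheses and the conclusion intact: nestedness and crossing are symmetric on the unordered pairs; swapping $A\leftrightarrow B$ or $E\leftrightarrow F$ merely permutes the four corners of $\{\AB,\EF\}$ and so fixes the set of corner separations; and the clause ``$\CD\le\sepn IJ$ or $\DC\le\sepn IJ$'' is symmetric in $C$ and~$D$. Hence, using~\eqref{eq_order_flip}, from $\AB\|\CD$ I may assume after a suitable change of notation that $\CD\le\AB$, and from $\CD\|\EF$, after possibly also swapping $E\leftrightarrow F$, that $\CD\le\EF$ or $\EF\le\CD$. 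The latter alternative is impossible: together with $\CD\le\AB$ it would give $\EF\le\AB$ by transitivity, hence $\AB\|\EF$, contrary to hypothesis. So it remains to treat the single case $\CD\le\AB$ and $\CD\le\EF$.

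In that case $C\sub A\cap E$ and $D\supseteq B\cup F$, so $\CD\le\csepn AEBF$, the corner separation of the corner $\sep AE$ in $\{\AB,\EF\}$; this is the corner separation $\sepn IJ$ the lemma asks for, and in particular $\CD$ is nested with it. For any other corner separation $\sepn KL$ of $\{\AB,\EF\}$: if it equals $\csepn AEBF$ we are done, and otherwise~\eqref{cornersnested} gives $\csepn AEBF\le\sepn LK$, so that $\CD\le\csepn AEBF\le\sepn LK$; thus $\CD$ is $\le$-comparable with $\sepn LK$ and hence nested with $\sepn KL$. This covers all four corner separations and finishes the argument.

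The geometric content is light; the step to watch is the normalisation — checking that each reversal really does preserve the precise form of both the hypotheses and the conclusion (so the ``without loss of generality'' is legitimate), and that the notation changes applied to $\AB$ and $\CD$ do not clash with the one later applied to~$\EF$. One should also note that~\eqref{cornersnested} may be invoked in a degenerate cross-diagram (empty corners, or coinciding corner separations); but those cases are trivial, since then $\csepn AEBF$ is $\le$ everything in sight or is literally equal to $\sepn KL$.
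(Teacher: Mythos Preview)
Your proof is correct and follows essentially the same route as the paper's. Both arguments normalise so that $\CD$ (or its inverse) is $\le$ both $\AB$ and $\EF$, identify the corner $A\cap E$ (the paper's $B\cap F$, after its dual normalisation $\AB\le\CD$) as the one whose corner separation dominates $\CD$, and then invoke~\eqref{cornersnested} to handle the remaining three corners. Your treatment of the ``without loss of generality'' step is more explicit than the paper's, which simply asserts it; otherwise the two proofs are interchangeable.
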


\begin{proof}
Changing notation as necessary,%
  \COMMENT{}
   we may assume that $\AB\le\CD$, and that \CD\ is comparable with~\EF.%
   \footnote{Note that such change of notation will not affect the set of corner separations of the cross-diagram of \AB\ and~\EF, nor the nestedness (or not) of \CD\ with those corner separations.}
   If $\CD\le\EF$ we have $\AB\le\EF$, contrary to our assumption. Hence $\CD\ge\EF$, or equivalently by~\eqref{eq_order_flip}, $\DC\le\FE$. As also $\DC\le\BA$, we thus have $D\sub F\cap B$ and $C\supseteq E\cup A$ and therfore $$\DC\le\csepn FBEA\specrel\le{\eqref{cornersnested}} \sepn LK$$
for each of the other three corner separations $\sepn KL$ of $\{\AB,\EF\}$.
\end{proof}

   \begin{figure}[htpb]
\centering
   	  \includegraphics{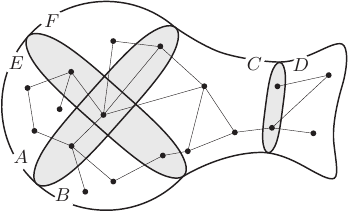}
   	  \caption{Separations as in Lemma~\ref{lem_nested_or}}
   \label{fig:nontrans}\vskip-12pt\vskip0pt
   \end{figure}

Figure~\ref{fig:nontrans} shows an example of three separations witnessing the non-transiti\-vi\-ty of nestedness. Its main purpose, however, is to illustrate the use of Lemma~\ref{lem_nested_or}. We shall often be considering which of two crossing separations, such as \AB\ and \EF\ in the example, we should adopt for a desired collection of nested separations already containing some separations such as~\CD. The lemma then tells us that we can opt to take neither, but instead choose a suitable corner separation.

Note that there are two ways in which three separations can be pairwise nested. One is that they or their inverses form a chain under~$\le$. But there is also another way, which will be important later; this is illustrated in Figure~\ref{fig:nested}.

   \begin{figure}[htpb]
\centering\vskip-6pt\vskip0pt
   	  \includegraphics{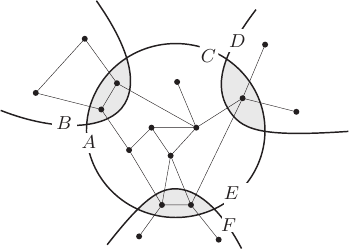}
   	  \caption{Three nested separations not coming from a $\le$-chain}
   \label{fig:nested}\vskip-6pt\vskip0pt
   \end{figure}

We need one more lemma.

\penalty-999

\begin{lem}\label{lem_block_sep}
Let \cN\ be a set of separations of $G$ that are pairwise nested. Let \AB\ and \CD\ be two further separations, each nested with all the separations in~\cN. Assume that $\AB$ separates an \cN-block~$b$, and that $\CD$ separates an \cN-block $b' \neq b$. Then $\AB\|\CD$. Moreover, $\sep AB\sub b$ and $C\cap D\sub b'$.
\end{lem}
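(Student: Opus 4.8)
The plan is to reduce the whole lemma to one elementary observation about how $\AB$ can be nested with a separation that has $b$ on one side, and then do a short orientation-tracking argument.

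\textbf{Observation.} Suppose $\sepn XY$ is a separation with $b\sub X$ and $\AB\|\sepn XY$. Then, since $\AB$ separates $b$, either $A\sub X$ and $Y\sub B$, or $B\sub X$ and $Y\sub A$; in both cases $\sep AB\sub X$. I would prove this by expanding $\AB\|\sepn XY$ into its four possibilities via~\eqref{def_PO} and~\eqref{eq_order_flip}. The possibilities $A\sub X,\,Y\sub B$ and $Y\sub A,\,B\sub X$ give the conclusion at once, since then $\sep AB$ lies in $A$ or in $B$, both contained in~$X$. The other two, $X\sub A,\,B\sub Y$ and $A\sub Y,\,X\sub B$, are impossible: the first forces $B\sm A\sub Y\sm X$, a set disjoint from $b\sub X$, so $b$ cannot meet $B\sm A$, contradicting that $\AB$ separates $b$; the second is symmetric with $A\sm B$ in place of $B\sm A$. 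This is the only point where the hypothesis ``$\AB$ separates $b$'' is used in an essential way, and essentially all the content of the lemma sits here — which is to say, not much.

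\textbf{The ``moreover''.} To get $\sep AB\sub b$, fix $w\in\sep AB$ and check that $b\cup\{w\}$ is still $\cN$-inseparable; maximality of the $\cN$-block $b$ then forces $b\cup\{w\}=b$, i.e. $w\in b$. Indeed, for any $\sepn XY\in\cN$ the $\cN$-inseparable set $b$ lies on one side, say $b\sub X$; by the Observation $w\in\sep AB\sub X$, so $b\cup\{w\}\sub X$ and $\sepn XY$ does not separate it. The same argument applied to $\CD$ and $b'$ gives $C\cap D\sub b'$.

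\textbf{Nestedness.} Apply Lemma~\ref{block_lem} to the distinct $\cN$-blocks $b,b'$ to obtain a separation $\sepn XY\in\cN$ separating them, and orient it so that $b\sub X$ and $b'\sub Y$. Since $\AB$ separates $b\sub X$ and is nested with $\sepn XY$, the Observation leaves only $A\sub X,\,Y\sub B$ or $B\sub X,\,Y\sub A$; since $\CD$ separates $b'\sub Y$ and is nested with $\sepn XY$, the same observation with the roles of $X$ and $Y$ interchanged leaves only $C\sub Y,\,X\sub D$ or $D\sub Y,\,X\sub C$. In each of the four combinations one obtains a chain of inclusions — for instance $A\sub X\sub D$ together with $C\sub Y\sub B$ — which by~\eqref{def_PO} makes $\AB$ $\le$-comparable with $\CD$ or with $\DC$, i.e. $\AB\|\CD$. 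The only thing that needs care here is bookkeeping: keeping straight which side of $\sepn XY$ (and of $\AB$, $\CD$) each block lies on, and stating the Observation symmetrically enough that it can be reapplied with $X$ and $Y$ swapped. Beyond that the argument uses nothing but the definition of~$\le$, the flip rule~\eqref{eq_order_flip}, and the definition of an $\cN$-block.
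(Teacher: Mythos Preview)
Your proof is correct. The ``moreover'' part is essentially the same as the paper's, just phrased as a direct inclusion rather than a contradiction: the paper assumes some $(K,L)\in\cN$ separates $b\cup(A\cap B)$ with $b\sub L$ and deduces $(L,K)\le\AB$ or $(L,K)\le\BA$, hence $b\sub L\sub A$ or $b\sub L\sub B$, contradicting that $\AB$ separates~$b$; your Observation is the contrapositive of exactly this.

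For the nestedness claim, however, you take a genuinely different route. The paper argues by contradiction via Lemma~\ref{lem_nested_or}: it picks $\EF\in\cN$ separating $b$ from $b'$, assumes $\AB\nparallel\CD$, and invokes Lemma~\ref{lem_nested_or} to place $\EF$ below one of the corner separations of $\{\AB,\CD\}$, which then traps $b$ in a single side of~$\AB$. Your argument instead applies the Observation twice, once for $\AB$ relative to $\sepn XY$ and once for $\CD$ relative to $\sepn YX$, and chains the resulting inclusions directly to get comparability. Your version is more elementary---it uses only the definition of~$\le$ and never touches corners or Lemma~\ref{lem_nested_or}---and has the side benefit of isolating the Observation as a reusable tool. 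The paper's version, on the other hand, is shorter once Lemma~\ref{lem_nested_or} is in hand and showcases that lemma's utility.
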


\begin{proof}
By Lemma~\ref{block_lem}, there is a separation $\EF\in \cN$ with $b \sub E$ and ${b' \sub F}$. Suppose $\AB \nparallel \CD$. By symmetry and Lemma~\ref{lem_nested_or} we may assume that
$$\EF \le \csepn ACBD.$$
But then $b \sub E \sub \sep AC \sub A$, contradicting the fact that \AB\ separates~$b$. Hence $\AB\|\CD$, as claimed.

If $A\cap B\not\sub b$, then there is a $(K,L)\in\cN$ which separates $b\cup (A\cap B)$. We may assume that $b\sub L$ and that $A\cap B\not\sub L$. The latter implies that $(K,L)\not \leq \AB$%
   \COMMENT{}
   and  $(K,L)\not\leq \BA$. So $(K,L)\|\AB$%
   \COMMENT{}
   implies that either $(L,K)\leq \AB$ or $(L,K)\leq \BA$. Thus $b\sub L\sub A$ or $b\sub L\sub B$, a contradiction to the fact that \AB\ separates $b$. Similarly we obtain $\sep CD \sub b'$.
\end{proof}

\section{Nested separation systems and tree structure} \label{sec_tree}

A set \cS\ of separations is \emph{symmetric} if $\AB\in\cS$ implies ${\BA\in\cS}$, and \emph{nested} if every two separations in \cS\ are nested. Any symmetric set of proper separations is a \emph{\sys}. Throughout this section and the next, we consider a fixed nested \sys~\cN\ of our graph~$G$.

Our aim in this section will be to describe \cN\ by way of a \emph{structure tree} $\cT=\T$, whose edges will correspond to the separations in~\cN. Its nodes%
   \footnote{While our graphs $G$ have {\em vertices\/}, structure trees will have {\em nodes\/}.}
    will correspond to subgraphs of~$G$. Every automorphism of~$G$ that leaves \cN\ invariant will also act on~\cT. Although our notion of a separation system differs from that of Dunwoody and Kr\"on~\cite{DunwoodyKroenArXiv, CuttingUpGraphs}, the main ideas of how to describe a nested system by a structure tree can already be found there.

Our main task in the construction of~\cT\ will be to define its nodes. They will be the equivalence classes of the following equivalence relation $\sim$ on~\cN, induced by the ordering $\le$ from~\eqref{def_PO}: 
\begin{equation}\label{def_sim}
\AB\sim\CD\ :\Lra \alt{\!\!\!\AB = \CD \mbox{ or}}{\!\!\!\BA\pre\CD\mbox{ in }(\cN,\le).}
\end{equation}
(Recall that, in a partial order $(P,\le)$, an element $x\in P$ is a {\em predecessor\/} of an element $z\in P$ if $x<z$ but there is no $y\in P$ with $x < y < z$.)

Before we prove that this is indeed an equivalence relation, it may help to look at an example: the set of vertices in the centre of Figure~\ref{fig:nested} will be the node of \cT\ represented by each of the equivalent nested separations \AB, \CD\ and~\EF.

\goodbreak

\begin{lem}\label{lem_eq}
The relation $\sim$ is an equivalence relation on \cN.
\end{lem}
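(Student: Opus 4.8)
The plan is to verify the three defining properties of an equivalence relation for $\sim$ on $\cN$: reflexivity, symmetry, and transitivity. Reflexivity is immediate from the first alternative in \eqref{def_sim}. For symmetry, suppose $\AB\sim\CD$ with $\AB\neq\CD$, so $\BA$ is a predecessor of $\CD$ in $(\cN,\le)$; I must show $\DC$ is a predecessor of $\AB$. Applying \eqref{eq_order_flip} to $\BA < \CD$ gives $\AB > \DC$, i.e.\ $\DC < \AB$; and if some $\sepn XY\in\cN$ satisfied $\DC < \sepn XY < \AB$, then \eqref{eq_order_flip} would give $\BA < \YX < \CD$, contradicting that $\BA$ is a predecessor of $\CD$. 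So $\DC$ is a predecessor of $\AB$, giving $\CD\sim\AB$.

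Transitivity is the substantive part. Suppose $\AB\sim\CD$ and $\CD\sim\EF$ with all three distinct (the cases where two coincide follow from the other two properties). Then $\BA$ is a predecessor of $\CD$, and $\DC$ is a predecessor of $\EF$. I want to conclude $\AB\sim\EF$, i.e.\ that $\BA$ is a predecessor of $\EF$. First I would establish that $\BA < \EF$: since $\BA < \CD$ and $\DC < \EF$, and using that all these separations are pairwise nested (they lie in the nested system $\cN$), I can compare $\BA$ with $\EF$. The key observation is that $\BA < \CD$ says $\BA$ lies below $\CD$, while $\DC < \EF$ flips to $\FE < \CD$, so both $\BA$ and $\FE$ lie below $\CD$; by \eqref{comp} applied to the proper separation $\CD$, the separation $\BA$ cannot also be comparable with $\DC$, so since $\BA\|\EF$ (nestedness) we must have $\BA$ comparable with $\EF$ itself, and the direction is forced to be $\BA \le \EF$ (if $\EF \le \BA$ then combined with $\DC < \EF$ we'd get $\DC < \BA$, but $\DC$ and $\BA$ lie on the same side below $\CD$ — one checks this contradicts properness via \eqref{comp}). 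Strictness $\BA\neq\EF$ holds because $\BA < \CD$ but $\EF > \DC$ forces $\EF\neq\BA$ lest $\CD\not\le\DC$ be violated; more directly, equality would make $\CD$ and $\DC$ both above $\BA=\EF$, again contradicting \eqref{comp}.

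It then remains to show there is no $\sepn XY\in\cN$ with $\BA < \sepn XY < \EF$. Suppose such an $\sepn XY$ exists. Since $\sepn XY\in\cN$ it is nested with $\CD$. I would argue that $\sepn XY$ must be comparable with $\CD$ (not with $\DC$): from $\BA < \sepn XY$ and $\BA < \CD$, and from $\sepn XY < \EF$ with $\FE < \CD$, both $\BA$ and $\FE$ sit below $\CD$; one shows that $\sepn XY$ sitting above $\BA$ and below $\EF$ cannot be comparable with $\DC$ without violating \eqref{comp} for $\CD$ or $\sepn XY$ itself. So $\sepn XY$ is comparable with $\CD$. If $\sepn XY \le \CD$, then since $\BA$ is a predecessor of $\CD$ and $\BA < \sepn XY \le \CD$, we get $\sepn XY = \CD$; but then $\BA < \CD < \EF$ contradicts $\CD\sim\EF$ only if $\BA<\CD$ is strict and $\CD<\EF$—wait, here $\DC<\EF$, so $\CD<\EF$ fails in general. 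Instead I'd note $\sepn XY=\CD$ gives $\CD<\EF$, whereas we know $\DC<\EF$; having both $\CD<\EF$ and $\DC<\EF$ contradicts \eqref{comp}. If instead $\CD \le \sepn XY$, then $\DC \ge \YX$, combined with $\sepn XY < \EF$ i.e.\ $\FE < \YX$; but $\DC$ is a predecessor of $\EF$, and from $\FE < \YX \le \DC$ flipping gives $\DC \le \sepn XY < \EF$, wait $\sepn XY<\EF$ and $\CD\le\sepn XY$ gives $\CD\le\sepn XY<\EF$, and $\DC<\EF$ is the predecessor relation, so comparability of $\CD$ with $\DC$ forced here contradicts \eqref{comp} again. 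The main obstacle is bookkeeping the flips via \eqref{eq_order_flip} and invoking \eqref{comp} at exactly the right moment to rule out the "wrong-side" comparabilities; once the nestedness of all separations in $\cN$ is used to guarantee comparability in some orientation, \eqref{comp} pins down which orientation, and the predecessor conditions close the argument.
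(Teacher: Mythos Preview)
Your overall strategy matches the paper's: reflexivity is immediate, symmetry comes from \eqref{eq_order_flip} (your explicit argument for the predecessor property is correct and more detailed than the paper's one-line appeal), and transitivity is handled by showing $\BA<\EF$ and then ruling out any intermediate $\sepn XY$ by comparing it with $\CD$ and invoking~\eqref{comp}.

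There is, however, one genuine gap. When you write ``by \eqref{comp} applied to the proper separation $\CD$, the separation $\BA$ cannot also be comparable with $\DC$, so since $\BA\|\EF$ (nestedness) we must have $\BA$ comparable with $\EF$ itself'', the conclusion does not follow from the premise. Nestedness only tells you that $\BA$ is comparable with $\EF$ \emph{or} with $\FE$, and the fact that $\BA$ is incomparable with $\DC$ says nothing about which of these holds. You never rule out the case that $\BA$ is comparable only with $\FE$. The paper handles this more cleanly: from the predecessor relations one gets that $\DC$ is a predecessor of both $\AB$ and $\EF$, so if $\AB$ and $\EF$ were comparable one would equal the other; hence $\AB$ is incomparable with $\EF$, and nestedness then forces $\BA$ to be comparable with $\EF$. (Alternatively you could directly rule out $\BA$ comparable with $\FE$: if $\FE\le\BA$ then $\FE\le\BA<\CD$ with $\FE$ a predecessor of $\CD$ forces $\FE=\BA$; if $\BA\le\FE$ then $\DC<\EF\le\AB$ with $\DC$ a predecessor of $\AB$ forces $\EF=\AB$.)

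Apart from this, your case analysis for the intermediate $\sepn XY$ reaches the right contradictions, though the exposition (with its ``wait'' asides and backtracking) would need to be rewritten before it could stand as a proof. The paper's version organises the four predecessor relations (i)--(iv) at the outset and then dispatches each case in one line, which you might find a useful template.
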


\begin{proof}
Reflexivity holds by definition, and symmetry follows from (\ref{eq_order_flip}). To show transitivity assume that $\AB\sim\CD$ and $\CD\sim\EF$, and that all these separations are distinct. Thus,
\begin{enumerate}[(i)]
\item \BA\pre\CD;
\item \DC\pre\EF.
\end{enumerate}
And by~(\ref{eq_order_flip}) also
\begin{enumerate}[(i)]
\setcounter{enumi}{2}
\item \DC\pre\AB;
\item \FE\pre\CD.
\end{enumerate}

\goodbreak

By (ii) and (iii), \AB\ is incomparable with \EF. Hence, since \cN\ is nested, \BA\ is comparable with \EF. If $\EF\leq\BA$ then by (i) and~(ii), 
$\DC\leq\CD$, which contradicts~\eqref{comp} (recall that all separations in a separation system are required to be proper). Thus $\BA<\EF$, as desired.

Suppose there is a separation $\XY\in\cN$ with $\BA<\XY<\EF$. As \cN\ is nested, \XY~is comparable with either \CD\ or \DC. By (i) and~(ii), $\XY\not <\CD$ and $ \DC\not <\XY$. Now if $\CD\le\XY<\EF$ then by (iv), \CD\ is comparable to both \EF\ and \FE, contradicting~\eqref{comp}.
Finally, if $\DC\geq\XY> \BA$, then by (iii), \DC\ is comparable to both \BA\ and \AB, again contradicting~\eqref{comp}.%
  \COMMENT{}
   We have thus shown that \BA\pre\EF, implying that $\AB\sim\EF$ as claimed.
\end{proof}

Note that, by~\eqref{comp}, the definition of equivalence implies:%
   \COMMENT{}
\begin{txteq}\label{incomp}
Distinct equivalent proper separations are incomparable under~$\le$.
\end{txteq}

We can now define the nodes of $\cT=\T$ as planned, as the equivalence classes of~$\sim\,$:
 $$V(\cT) := \big\{ [\AB] : \AB\in\cN\big\}.$$\medbreak

Having defined the nodes of~$\cT$, let us define its edges. For every separation $\AB\in\cN$ we shall have one edge, joining the nodes represented by \AB\ and~\BA, respectively. To facilitate notation later, we formally give \cT\ the abstract edge set
 $$E(\cT) := \big\{\{\AB,\BA\}\mid\AB\in\cN\big\}$$
and declare an edge $e$ to be incident with a node $\cX\in{V(\cT)}$ whenever $e\cap\cX \neq \emptyset$ (so that the edge $\{\AB,\BA\}$ of~\cT\ joins its nodes $[\AB]$ and $[\BA]$). We have thus, so far, defined a multigraph~$\cT$.

As $\AB\not\sim\BA$ by definition of $\sim$, our multigraph
 \cT~has no loops. Whenever an edge $e$ is incident with a node~$\cX$, the non-empty set $e\cap\cX$ that witnesses this is a singleton set containing one separation.%
   \COMMENT{}
   We denote this separation by~$(e\cap\cX)$. Every separation $\AB\in\cN$ occurs as such an $(e\cap\cX)$, with $\cX = [\AB]$ and $e = \{\AB,\BA\}$. Thus,
\begin{txteq}\label{nodes}
Every node \cX\ of~\cT\ is the set of all the separations $(e\cap\cX)$ such that $e$~is incident with~\cX. In particular, $\cX$~has degree~$|\cX|$ in~\cT.
\end{txteq}

Our next aim is to show that $\cT$ is a tree.

\begin{lem}\label{lem_walk}
Let $W = \cX_1e_1\cX_2e_2\cX_3$ be a walk in \cT\ with $e_1 \neq e_2$. Then $(e_1\cap\cX_1)$ is a predecessor of~$(e_2\cap\cX_2)$.
\end{lem}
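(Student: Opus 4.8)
The plan is to trace through the definitions of the vertices and edges of~$\cT$ given by~\eqref{def_sim} and~\eqref{nodes}: once the two separations carried by $e_1$ and $e_2$ are named with the right orientations, the claimed predecessor relation will be immediate, so the only real work lies in setting up that notation and in pinpointing where the hypothesis $e_1\neq e_2$ is used.

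First I would name the separations on the two edges. Every edge of~$\cT$ is an unordered pair $\{\AB,\BA\}$ of the two orientations of a separation in~$\cN$, and $\cT$ has no loops, so $[\AB]\neq[\BA]$ and the two endpoints of $e_1$ in~$\cT$ are exactly $[\AB]$ and $[\BA]$. Choosing the labels $A,B$ suitably I may therefore assume $\cX_1=[\AB]$, so that $\cX_2=[\BA]$; then $(e_1\cap\cX_1)=\AB$ by~\eqref{nodes}. In the same way write $e_2=\{\CD,\DC\}$ with the labels chosen so that $\cX_2=[\CD]$ (hence $\cX_3=[\DC]$), so that $(e_2\cap\cX_2)=\CD$.

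Next I would read off what it means for $e_1$ and $e_2$ to share the endpoint~$\cX_2$. From $[\BA]=\cX_2=[\CD]$ we get $\BA\sim\CD$, so by~\eqref{def_sim} either $\BA=\CD$ or $\AB\pre\CD$ in~$(\cN,\le)$. The first alternative would force $e_2=\{\CD,\DC\}=\{\BA,\AB\}=e_1$, contrary to hypothesis; hence $\AB\pre\CD$, that is, $(e_1\cap\cX_1)\pre(e_2\cap\cX_2)$, as desired.

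I do not expect a genuine obstacle: the argument is pure definition-chasing, and the only point needing care is the orientation bookkeeping above — checking that the labels can indeed be chosen as claimed and that $(e_i\cap\cX_j)$ then comes out as stated. Note that the hypothesis $e_1\neq e_2$ is used precisely to discard the trivial alternative in~\eqref{def_sim}, i.e.\ to record that the walk does not immediately retrace~$e_1$. (By the same token $\cX_1\neq\cX_3$: otherwise $\AB\sim\DC$, so $\BA\pre\DC$ — the alternative $\AB=\DC$ again giving $e_1=e_2$ — and flipping orientations via~\eqref{eq_order_flip} turns this into $\CD\pre\AB$, contradicting $\AB\pre\CD$.)
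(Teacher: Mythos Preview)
Your proof is correct and follows essentially the same route as the paper's: name $(e_1\cap\cX_1)=\AB$ and $(e_2\cap\cX_2)=\CD$, observe $\BA\sim\CD$ because both lie in~$\cX_2$, and use $e_1\ne e_2$ to rule out $\BA=\CD$, leaving the predecessor alternative from~\eqref{def_sim}. The parenthetical remark about $\cX_1\ne\cX_3$ is a correct extra observation but is not needed for the lemma itself.
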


\begin{proof}
Let $(e_1 \cap \cX_1) = \AB$ and $(e_2 \cap \cX_2) = \CD$. Then $\BA = (e_1 \cap \cX_2)$ and $\BA \sim \CD$. Since $e_1 \neq e_2$ we have $\BA\neq\CD$. Thus, $\AB$~is a predecessor of~$\CD$ by definition of~$\sim$.
\end{proof}

And conversely:

\sloppy
\begin{lem}\label{path_in_T}
Let $(E_0,F_0), \dots , (E_k,F_k)$ be separations in~\cN\ such that each $(E_{i-1},F_{i-1})$ is a predecessor of $(E_i,F_i)$ in~$(\cN,\le)$. Then $[(E_0,F_0)], \dots, [(E_k,F_k)]$ are the nodes of a walk in~\cT, in this order.%
   \COMMENT{}
\end{lem}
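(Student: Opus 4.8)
The statement claims that if $(E_0,F_0),\dots,(E_k,F_k)$ is a chain of predecessors in $(\cN,\le)$, then the equivalence classes $[(E_0,F_0)],\dots,[(E_k,F_k)]$ are the successive nodes of a walk in $\cT$. My plan is to induct on $k$, the key case being $k=1$: given that $(E_0,F_0)$ is a predecessor of $(E_1,F_1)$, I must exhibit an edge of $\cT$ joining $[(E_0,F_0)]$ and $[(E_1,F_1)]$. The natural candidate is the edge $e_1:=\{(F_0,E_0),(E_0,F_0)\}$, which by construction joins the nodes $[(E_0,F_0)]$ and $[(F_0,E_0)]$. So the crux is to show $[(F_0,E_0)]=[(E_1,F_1)]$, i.e.\ $(F_0,E_0)\sim(E_1,F_1)$. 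By the definition~\eqref{def_sim} of $\sim$ this holds precisely because $(E_0,F_0)$ is a predecessor of $(E_1,F_1)$ — that is exactly the second alternative in the definition (with $(F_0,E_0)$ in the role of $\BA$ and $(E_1,F_1)$ in the role of $\CD$). So the base case is essentially a direct unfolding of definitions.

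For the inductive step, suppose the nodes $[(E_0,F_0)],\dots,[(E_{k-1},F_{k-1})]$ form a walk $W$ ending at $[(E_{k-1},F_{k-1})]$, realised by edges $e_1,\dots,e_{k-1}$ with $e_i=\{(E_i,F_i),(F_i,E_i)\}$ read appropriately; and that $(E_{k-1},F_{k-1})$ is a predecessor of $(E_k,F_k)$. As in the base case, the edge $e_k:=\{(E_{k-1},F_{k-1}),(F_{k-1},E_{k-1})\}$ joins $[(E_{k-1},F_{k-1})]$ to $[(F_{k-1},E_{k-1})]=[(E_k,F_k)]$, so appending $e_k$ extends the walk by one step and reaches $[(E_k,F_k)]$. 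I should double-check that $e_k$ is genuinely a (new) edge and incident with $[(E_{k-1},F_{k-1})]$: incidence holds because $(E_{k-1},F_{k-1})\in e_k\cap[(E_{k-1},F_{k-1})]$, which is nonempty. (We do not need the edges $e_i$ to be distinct for this to be a walk; the statement only asks for a walk.)

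The only genuine subtlety, and the step I expect to need the most care, is making sure the edge used at each stage is correctly incident with the right node: an edge $\{\AB,\BA\}$ is incident with a node $\cX$ iff $\{\AB,\BA\}\cap\cX\neq\es$, and since $(E_{i-1},F_{i-1})$ and $(F_{i-1},E_{i-1})$ lie in different equivalence classes (as $(E_{i-1},F_{i-1})\not\sim(F_{i-1},E_{i-1})$, the system being proper — cf.\ the discussion after Lemma~\ref{lem_eq} and \eqref{comp}), the edge $e_i$ joins two distinct nodes, namely exactly $[(E_{i-1},F_{i-1})]$ and $[(E_i,F_i)]$. Once this incidence bookkeeping is set up, the whole argument is just the repeated application of the defining clause of $\sim$, so no serious obstacle remains beyond keeping the orientations of the pairs straight.
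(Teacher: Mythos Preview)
Your proposal is correct and takes essentially the same approach as the paper: both arguments observe that, since $(E_{i-1},F_{i-1})$ is a predecessor of $(E_i,F_i)$, the definition of $\sim$ gives $(F_{i-1},E_{i-1})\sim(E_i,F_i)$, so the edge $\{(E_{i-1},F_{i-1}),(F_{i-1},E_{i-1})\}$ joins $[(E_{i-1},F_{i-1})]$ to $[(E_i,F_i)]$. The paper simply states this for all $i=1,\dots,k$ at once rather than framing it as an induction, and your extra bookkeeping (distinctness of endpoints, incidence checks) is already absorbed into the paper's earlier observation that $\cT$ has no loops.
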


\begin{proof}
By definition of~$\sim$, we know that $(F_{i-1},E_{i-1})\sim (E_i,F_i)$. Hence for all $i=1,\dots,k$, the edge $\{(E_{i-1},F_{i-1}), (F_{i-1}, E_{i-1})\}$ of~\cT\ joins the node $[(E_{i-1}, F_{i-1})]$ to the node $[(E_i, F_i)] = [(F_{i-1}, E_{i-1})]$.
\end{proof}

\fussy
\begin{thm}\label{thm_tree}
The multigraph \T\ is a tree.
\end{thm}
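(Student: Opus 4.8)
The plan is to show that $\cT$ is connected and acyclic. For connectedness, I would exploit the fact that $\cN$ is finite (as $G$ is finite, there are only finitely many separations), so every separation lies above some $\le$-minimal separation of $\cN$; more to the point, any two separations can be linked by a zigzag of $\le$-steps through $\cN$. Concretely, given nodes $[\AB]$ and $[\CD]$, since $\cN$ is nested and finite one can descend from $\AB$ to a $\le$-minimal separation and likewise from $\CD$; using \eqref{comp} and nestedness one checks these minimal separations can be joined, and then Lemma~\ref{path_in_T} turns each maximal ascending chain of predecessors into a walk in $\cT$, so the two nodes lie in the same component. Here the cleanest route is probably: for any $\AB\in\cN$ the nodes $[\AB]$ and $[\BA]$ are joined by the edge $\{\AB,\BA\}$, and then one shows that from any node one can reach a node containing a $\le$-minimal (equivalently, a $\ge$-maximal) element, so it suffices to see all such ``extremal'' nodes coincide or are mutually reachable — which follows because two $\le$-maximal separations of a nested system, by \eqref{comp}, must be each other's inverses or be $\|$ in a way forcing them into the same class.

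For acyclicity, the key tool is Lemma~\ref{lem_walk}: along any walk $\cX_1 e_1 \cX_2 e_2 \cX_3$ with $e_1\ne e_2$, the separation $(e_1\cap\cX_1)$ is a predecessor of $(e_2\cap\cX_2)$, hence in particular $(e_1\cap\cX_1) < (e_2\cap\cX_2)$. Iterating this along any walk $\cX_1 e_1 \cX_2 \cdots e_n \cX_{n+1}$ in which consecutive edges are distinct, the separations $(e_i\cap\cX_i)$ form a strictly increasing chain in $(\cN,\le)$ — more carefully, $(e_1\cap\cX_1)<(e_2\cap\cX_2)<\cdots<(e_n\cap\cX_n)$, using transitivity of $<$ together with Lemma~\ref{lem_walk} applied to each consecutive triple (note $e_{i}\ne e_{i+1}$ is exactly the condition needed, and in a cycle all consecutive edges are distinct). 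A closed walk that is a cycle would return to $\cX_1$, forcing $(e_n\cap\cX_{n+1})=(e_n\cap\cX_1)$; I would compare this with $(e_1\cap\cX_1)$. Since $e_1\ne e_n$ (a cycle has at least three edges, or at least two distinct ones — one must be slightly careful with length‑$2$ closed walks, but those are excluded because $\cT$ has no loops and the two edges of a $2$‑cycle would be equal as sets $\{\AB,\BA\}$), Lemma~\ref{lem_walk} applied to the triple $\cX_n e_n \cX_1 e_1 \cX_2$ gives $(e_n\cap\cX_n) < (e_1\cap\cX_1)$, contradicting the strict chain $(e_1\cap\cX_1)<\cdots<(e_n\cap\cX_n)$. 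Hence no cycle exists.

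The main obstacle I anticipate is the bookkeeping around which edges in a putative cycle are distinct, and handling the degenerate short cases (loops are already ruled out; ``digons'' need the observation that an edge of $\cT$ is the unordered pair $\{\AB,\BA\}$, so two parallel edges between the same pair of nodes would have to be literally equal). Everything else is a mechanical application of Lemma~\ref{lem_walk} and transitivity of the strict order, with \eqref{comp} and \eqref{incomp} available to exclude the equality $(e_i\cap\cX_i)=(e_j\cap\cX_j)$ for $i\ne j$ along the chain. Once acyclicity is in hand, connectedness is the genuinely new piece of work, and I would present it first, then derive acyclicity as above; alternatively, if the paper's later sections only need that each component is a tree, one could even state connectedness as a separate small lemma. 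I expect the finiteness of $G$ (hence of $\cN$) to be used exactly once, to guarantee that ascending/descending chains of predecessors terminate, giving the extremal nodes needed for the connectedness argument.
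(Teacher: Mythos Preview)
Your acyclicity argument is correct and is exactly the paper's: iterate Lemma~\ref{lem_walk} around a putative cycle to obtain a strict $<$-chain in $\cN$ returning to its starting point. The paper dispatches the parallel-edge case in one line by noting that acyclicity (no strict chain $\AB<\AB$) already rules it out; your separate discussion of digons is harmless but unnecessary.

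Your connectedness argument, however, has a genuine gap. The claim that ``two $\le$-maximal separations of a nested system, by~\eqref{comp}, must be each other's inverses or be $\|$ in a way forcing them into the same class'' is false. Take $G$ the path $v_1v_2v_3v_4v_5$ and $\cN$ its six proper $1$-separations. The $\le$-maximal elements are $(A_4,B_4)=(\{v_1,\dots,v_4\},\{v_4,v_5\})$ and $(B_2,A_2)=(\{v_2,\dots,v_5\},\{v_1,v_2\})$; these are neither inverses nor equivalent, since $(B_4,A_4)<(B_3,A_3)<(B_2,A_2)$ shows $(B_4,A_4)$ is not a predecessor of $(B_2,A_2)$. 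So the ``all extremal nodes coincide or are mutually reachable'' step does not follow from the stated reasons, and your detour through extremal separations collapses.

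The paper's route is far more direct, and you have all the ingredients for it. Given nodes $[\AB]$ and $[\CD]$, nestedness of~$\cN$ says $\AB$ is $\le$-comparable with $\CD$ or with~$\DC$; since $[\CD]$ and $[\DC]$ are adjacent, assume the former and, reversing the walk if needed, that $\AB<\CD$. By finiteness of~$G$ (hence of~$\cN$), refine this to a maximal chain $\AB=(E_0,F_0)<\cdots<(E_k,F_k)=\CD$ in which each term is a predecessor of the next, and apply Lemma~\ref{path_in_T}. No extremal elements are needed; nestedness already hands you the comparison between \emph{arbitrary} pairs.
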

\begin{proof}
We have seen that $\cT$ is loopless. Suppose that \cT\ contains a cycle $\cX_1 e_1 \cdots \cX_{k-1} e_{k-1} \cX_k$, with ${\cX_1 = \cX_k}$ and $k>2$.
Applying Lemma \ref{lem_walk} $(k-1)$ times yields
 $$\AB := (e_1 \cap \cX_1) < \ldots < (e_{k-1} \cap \cX_{k-1}) <  (e_{1} \cap \cX_{k})  = \AB,$$
a contradiction. Thus, \cT~is acyclic; in particular, it has no parallel edges.

% Applying Lemma \ref{lem_walk} $(k-2)$ times yields
% $$\AB := (e_1 \cap \cX_1) < (e_{k-1} \cap \cX_{k-1}) =: \CD.$$
%Then $\DC \neq \AB$ by~\eqref{comp}, and $\CD \not< \AB$, so $\DC\not\sim\AB$. But we also have $(D,C) = (e_{k-1}\cap\cX_k)$. So we obtain
% $$\cX_1 = [\AB] \ne [\DC] = \cX_k,$$
%contrary to our assumption. Thus, \cT~is acyclic.

It remains to show that \cT\ contains a path between any two given nodes $[\AB]$ and~$[\CD]$. As $\cN$ is nested, we know that \AB\ is comparable with either \CD\ or \DC. Since $[\CD]$ and $[\DC]$ are adjacent, it suffices to construct a walk between [\AB] and one of them. Swapping the names for $C$ and $D$ if necessary, we may thus assume that \AB\ is comparable with~\CD. Reversing the direction of our walk if necessary, we may further assume that $\AB<\CD$. Since our graph $G$ is finite, there is a chain
 $$\AB = (E_0,F_0) < \cdots <(E_k,F_k) = \CD$$
such that $(E_{i-1},F_{i-1})$ is a predecessor of $(E_i,F_i)$, for every~$i = 1,\dots,k$. By Lemma~\ref{path_in_T}, \cT~contains the desired path from $[\AB]$ to~$[\CD]$.
\end{proof}

\begin{cor}\label{cor_tn_Gamma}
If \cN\ is invariant under a group $\Gamma \le \Aut(G)$ of automorphisms of~$G$, then $\Gamma$ also acts on $\cT$ as a group of automorphisms.
\end{cor}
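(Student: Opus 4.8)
The plan is to make each $\gamma\in\Gamma$ act on the separations of $G$ in the obvious way, via $\gamma\colon\AB\mapsto(\gamma A,\gamma B)$, and then to check that this action descends to the nodes and edges of $\cT=\T$ and respects incidence.

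First I would note that $(\gamma A,\gamma B)$ is again a separation of $G$: applying the automorphism $\gamma$ to $G[A]\cup G[B]=G$ yields $G[\gamma A]\cup G[\gamma B]=G$, and properness is preserved since $\gamma A\sm\gamma B=\gamma(A\sm B)$ and $\gamma B\sm\gamma A=\gamma(B\sm A)$. As $\cN$ is $\Gamma$-invariant, $\gamma$ thus restricts to a permutation of $\cN$. Next I would check that this permutation preserves the partial order~\eqref{def_PO}: indeed $\AB\le\CD$ means $A\sub C$ and $B\supseteq D$, which holds if and only if $\gamma A\sub\gamma C$ and $\gamma B\supseteq\gamma D$, i.e.\ if and only if $(\gamma A,\gamma B)\le(\gamma C,\gamma D)$. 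So $\gamma$ is an automorphism of the poset $(\cN,\le)$; in particular it preserves the predecessor relation, and hence the relation $\sim$ of~\eqref{def_sim}: we have $\AB\sim\CD$ if and only if $(\gamma A,\gamma B)\sim(\gamma C,\gamma D)$.

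It then follows that $\gamma$ permutes the $\sim$-classes, so $\gamma[\AB]:=[(\gamma A,\gamma B)]$ is a well-defined permutation of $V(\cT)$; likewise $\gamma\{\AB,\BA\}:=\{(\gamma A,\gamma B),(\gamma B,\gamma A)\}$ is a well-defined permutation of $E(\cT)$, since this is again a set of the form $\{\CD,\DC\}$ with $\CD\in\cN$. Because $\gamma$ is injective on separations we have $\gamma(e\cap\cX)=\gamma e\cap\gamma\cX$ for every edge $e$ and node $\cX$, and therefore $e\cap\cX\neq\es$ if and only if $\gamma e\cap\gamma\cX\neq\es$; that is, $\gamma$ preserves incidence, so it induces an automorphism of the multigraph $\cT$. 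Finally I would observe that $\mathrm{id}_G$ induces the identity of $\cT$ and that, directly from the displayed formulas, $\gamma_1\gamma_2$ induces the composition of the automorphisms induced by $\gamma_1$ and $\gamma_2$; hence $\gamma\mapsto(\text{induced map on }\cT)$ is a homomorphism $\Gamma\to\Aut(\cT)$, and $\Gamma$ acts on $\cT$ as a group of automorphisms.

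Every step here is a routine verification, so I do not anticipate a genuine obstacle; the one point that deserves a moment's care is that the induced maps on $V(\cT)$ and $E(\cT)$ must fit together into a single graph automorphism, which is why one explicitly records that $\gamma$ is injective on separations and hence commutes with the intersections $e\cap\cX$ that define incidence in~\eqref{nodes}.
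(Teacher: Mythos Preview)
Your proof is correct and follows essentially the same approach as the paper: both argue that automorphisms of $G$ preserve separations and the partial order~\eqref{def_PO}, hence (given $\Gamma$-invariance of~$\cN$) the equivalence relation~$\sim$, and therefore act on the nodes and edges of~$\cT$. You have simply spelled out in more detail what the paper states tersely, including the explicit verification of incidence preservation and the group-homomorphism property.
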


\begin{proof}
Any automorphism $\alpha$ of $G$ maps separations to separations, and preserves their partial ordering defined in~\eqref{def_PO}. If both $\alpha$ and $\alpha^{-1}$ map separations from~\cN\ to separations in~\cN, then $\alpha$ also preserves the equivalence of separations under~$\sim$.%
   \COMMENT{}
   Hence~$\Gamma$, as stated, acts on the nodes of~\cT\ and preserves their adjacencies and non-adjacencies.
\end{proof}

\section{From structure trees to \td s}\label{sec_td}

Throughout this section, \cN\ continues to be an arbitrary nested separation system of our graph~$G$. Our aim now is to show that $G$ has a \td, in the sense of Robertson and Seymour, with the structure tree~$\cT=\T$ defined in Section~\ref{sec_tree} as its decomposition tree. The separations of $G$ associated with the edges of this decomposition tree%
   \footnote{as in the theory of \td s, see e.g.~\cite[Lemma 12.3.1]{DiestelBook10noEE}}
   will be precisely the separations in~\cN\ identified by those edges in the original definition of~\cT.

Recall that a \emph{\td} of $G$ is a pair $(T,\cV)$ of a tree $T$ and a family $\cV=(V_t)_{t\in T}$ of vertex sets $V_t\sub V(G)$, one for every node of~$T$, such that:
\begin{enumerate}[(T1)]
\item $V(G) = \bigcup_{t\in T}V_t$;
\item for every edge $e\in G$ there exists a $t \in T$ such that both ends of $e$ lie in~$V_t$;
\item $V_{t_1} \cap V_{t_3} \sub V_{t_2}$ whenever $t_2$ lies on the $t_1$--$t_3$ path in~$T$.
\end{enumerate}

To define our desired \td~$(\cT,\cV)$, we thus have to define the family $\cV = (V_\cX)_{\cX\in{V(\cT)}}$ of its parts: with every node $\cX$ of~\cT\ we have to associate a set $V_\cX$ of vertices of~$G$. We define these as follows:
\begin{equation}\label{VtDef}
 V_\cX := \bigcap\big\{\,A\ |\ \AB \in\cX\,\big\}
\end{equation}

\begin{ex}\label{block-cutvx}
Assume that $G$ is connected, and consider as \cN\ the nested set of all proper 1-separations \AB\ and~\BA\ such that $A\sm B$ is connected in~$G$.%
   \COMMENT{}
   Then \cT\ is very similar to the block-cutvertex tree of~$G$: its nodes will be the blocks in the usual sense (maximal 2-connected subgraphs or bridges) plus those cutvertices that lie in at least three blocks.%
   \COMMENT{}

   \begin{figure}[htpb]
\centering
   	  \includegraphics{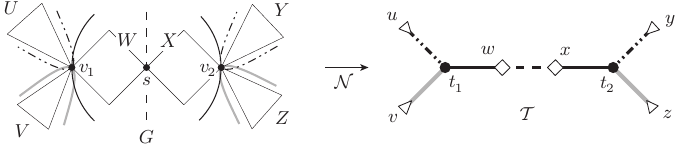}
   	  \caption{\cT~has an edge for every separation in~\cN. Its nodes correspond to the blocks and some of the cutvertices of~$G$.}
   \label{fig:bc1}\vskip-6pt\vskip0pt
   \end{figure}

In Figure~\ref{fig:bc1}, this separation system \cN~contains all the proper 1-separations of~$G$. The separation \AB\ defined by the cutvertex~$s$, with $A:= U\cup V\cup W$ and $B:= X\cup Y\cup Z$ say, defines the edge $\{\AB, \BA\}$ of~\cT\ joining its nodes $w = [\AB]$ and $x = [\BA]$.%
   \COMMENT{}

\begin{figure}[htpb]
\centering
   	  \includegraphics{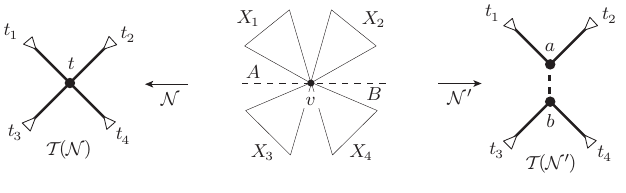}
   	  \caption{$\cT' = \cT(\cN')$ has distinct nodes $a,b$ whose parts in the \td\ $(\cT',\cV)$ coincide: $V_a = \{v\} = V_b$.}
   \label{fig:bc2}\vskip-6pt\vskip0pt
   \end{figure}

In Figure~\ref{fig:bc2} we can add to \cN\ one of the two crossing 1-separations not in~\cN\ (together with its inverse), to obtain a set $\cN'$ of separations that is still nested. For example, let
 $$\cN':= \cN\cup\{\AB,\BA\}$$
with $A:= X_1\cup X_2$ and $B:= X_3\cup X_4$. This causes the central node~$\cX$ of \cT\ to split into two nodes $a = [\AB]$ and $b=[\BA]$ joined by the new edge $\{\AB,\BA\}$. However the new nodes $a,b$ still define the same part of the \td\ of~$G$ as $t$ did before: $V_a = V_b = V_t = \{v\}$.
\end{ex}

Before we prove that $(\cT,\cV)$ is indeed a \td, let us collect some information about its parts $V_\cX$, the vertex sets defined in~\eqref{VtDef}.

\begin{lem}\label{Vt_insep}
Every 
$V_\cX$ is \cN-inseparable.
\end{lem}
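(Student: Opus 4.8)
The goal is to show that no separation $\AB \in \cN$ separates $V_\cX$, where $V_\cX = \bigcap\{A' : (A',B')\in\cX\}$ for a node $\cX$ of $\cT$.

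\medbreak

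\textbf{Plan.} Fix a separation $\AB\in\cN$; I want to show $\AB$ does not separate $V_\cX$, i.e.\ $V_\cX\sub A$ or $V_\cX\sub B$ (weak separation is not enough to rule out a priori, but in fact I will get containment in one side outright). The key observation is that the elements of $\cX$ form an antichain of pairwise incomparable separations by \eqref{incomp}, and more importantly that $\cX$, as a node of the structure tree, ``points consistently'' in the direction of $\AB$. Concretely, since $\cN$ is nested, $\AB$ is comparable with $\CD$ or with $\DC$ for every $(C,D)\in\cX$. I will argue that one of the two separations $\AB,\BA$ lies ``above'' all of $\cX$ in the sense that $\DC\le\AB$ (equivalently $\BA\le\CD$) for every $(C,D)\in\cX$, or symmetrically with the roles of $A,B$ swapped. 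Then from $\BA\le\CD$ for all $(C,D)\in\cX$ I get $B\supseteq\bigcup\{D : (C,D)\in\cX\}$... more usefully, from $\DC\le\AB$ I get $D\sub A$, and since this holds for every $(C,D)\in\cX$ and $V_\cX\sub C\cap D\sub D$ for at least... hmm, let me instead use: if $(C,D)\in\cX$ then $V_\cX = \bigcap\{C' : (C',D')\in\cX\}\sub C$; combining with $\DC\le\AB$, i.e.\ $D\sub A$ and $C\supseteq B$, I want $V_\cX\sub A$. This needs $V_\cX\sub D$ or a direct argument — so the better route is to find a single $(C,D)\in\cX$ with $\CD\le\AB$ (equivalently $C\sub A$), which immediately gives $V_\cX\sub C\sub A$ and we are done.

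\medbreak

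\textbf{Finding such a $(C,D)$.} If $\AB\in\cX$ itself, take $(C,D)=\AB$ and $V_\cX\sub A$ trivially. Otherwise, pick any $(C,D)\in\cX$. By nestedness, $\AB$ is comparable with $\CD$ or $\DC$; by \eqref{eq_order_flip} and relabelling I may assume $\AB$ is comparable with $\CD$. If $\AB\le\CD$... that's the wrong direction; if instead $\CD\le\AB$ then $C\sub A$ and we are done. So the problematic case is $\AB\le\CD$ with $\AB\ne\CD$, i.e.\ $\AB<\CD$. Here I claim we can walk ``up'' within $\cX$ or rather reach a contradiction: consider whether $\BA$ is comparable with elements of $\cX$. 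Since $\AB<\CD$, in particular $\AB$ and $\BA$ are both comparable with $\CD$ (namely $\AB<\CD$ and, using \eqref{comp}, $\BA$ is comparable with $\CD$ but not $\le\CD$, so $\CD<\BA$ or $\CD$ incomparable with $\BA$ — actually since $\cN$ is nested $\BA$ is comparable with $\CD$ or $\DC$). The clean way: use that $[\AB]$ and $[\CD]$ are joined by a path in the tree $\cT$ (Theorem~\ref{thm_tree}); chase predecessors. Actually the slickest argument: suppose for contradiction $\AB$ separates $V_\cX$, so $V_\cX$ meets both $A\sm B$ and $B\sm A$. Pick $(C,D)\in\cX$; since $V_\cX\sub C$ and $V_\cX\not\sub A$, we have $C\not\sub A$, so $\CD\not\le\AB$; and $V_\cX\sub C$, $V_\cX\cap(B\sm A)\ne\es$ with $V_\cX\sub D$ requires reconsidering. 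Let me restructure: I will show directly that $\bigcap\{A':(A',B')\in\cX\}$ equals the centre-type intersection and use \eqref{incomp} together with \eqref{cornersnested}-style facts to show any $\AB\in\cN$ has, among $\cX$, a witness $(C,D)$ with $\CD\le\AB$. The point is that $\cX$ is a ``star'': all its separations, oriented as in $\cX$, point away from a common region, so for any further nested $\AB$, sliding $\AB$ past $\cX$ lands $A$ inside some side $C$.

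\medbreak

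\textbf{Main obstacle.} The delicate part is the last paragraph: cleanly proving that for \emph{every} $\AB\in\cN$ there is $(C,D)\in\cX$ with $C\sub A$ (equivalently $\CD\le\AB$), or by symmetry $D\sub A$ won't suffice — I really need the $C\sub A$ version to conclude $V_\cX\sub A$. I expect to handle this by: take any $(C_0,D_0)\in\cX$; if $C_0\sub A$ done; else by nestedness and \eqref{comp}, since $(C_0,D_0)\not\le\AB$ and $\AB$ comparable with $(C_0,D_0)$ or $(D_0,C_0)$, deduce either $\AB<(C_0,D_0)$ or $\AB$ relates to $(D_0,C_0)$; in the subcase $\AB<(C_0,D_0)$ one shows $(D_0,C_0)\sim\AB$ or rather $\AB$ has a predecessor equivalent into $\cX$ — using Lemma~\ref{lem_walk}/Lemma~\ref{path_in_T} and that $\cX$ is an equivalence class, one moves from $(C_0,D_0)$ to another element of $\cX$ strictly closer to $\AB$, and finiteness of $G$ (hence of $\cN$) terminates the process at the desired $(C,D)$. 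I'd write this termination argument carefully, as it is the crux; everything else is bookkeeping with \eqref{def_PO} and \eqref{eq_order_flip}.
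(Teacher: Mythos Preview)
Your plan is on the right track but overcomplicates the crucial case, and the ``termination'' argument you sketch would not work as stated. The paper's proof is a two-case argument that finishes in one step each.

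In your notation: pick any $\CD\in\cX$; by nestedness, after swapping the names of $A$ and $B$ if necessary, $\CD$ is $\le$-comparable with $\AB$. If $\CD\le\AB$ then $V_\cX\sub C\sub A$ and you are done. The remaining case is $\AB<\CD$. Here the paper does not iterate: it simply takes a $\le$-predecessor $\EF$ of $\CD$ in~$\cN$ with $\AB\le\EF$ (such exists by finiteness, taking a $\le$-maximal element strictly below~$\CD$ and $\ge\AB$). Then $\FE\sim\CD$ by the definition of~$\sim$, so $\FE\in\cX$, whence $V_\cX\sub F$. Since $\AB\le\EF$ gives $B\supseteq F$, we obtain $V_\cX\sub F\sub B$, and $\AB$ does not separate~$V_\cX$.

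Your proposed mechanism --- ``move from $(C_0,D_0)$ to another element of $\cX$ strictly closer to~$\AB$'' --- cannot work as written, because by~\eqref{incomp} the elements of $\cX$ form a $\le$-antichain: no element of $\cX$ is $\le$-closer to $\AB$ than any other, so there is nothing to iterate on. What is needed is not a walk within~$\cX$ but a single predecessor step \emph{outside}~$\cX$ (the separation $\EF$ above), whose \emph{inverse} $\FE$ then lands in~$\cX$. You gesture at this with ``$\AB$ has a predecessor equivalent into~$\cX$'', but that is backwards: it is $\CD\in\cX$ that has a predecessor $\EF\ge\AB$, and it is $\FE$, not $\EF$, that lies in~$\cX$.
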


\begin{proof}
Let us show that a given separation $\CD\in\cN$ does not separate~$V_t$. Pick $\AB\in\cX$. Since \cN\ is nested, and swapping the names of $C$ and $D$ if necessary, we may assume that \AB\ is $\le$-comparable with \CD. If ${\AB\le\CD}$ then $V_t \sub A\sub C$, so \CD\ does not separate~$V_t$. If $\CD<\AB$, there is a $\le$-predecessor \EF\ of \AB\ with $\CD\le\EF$. Then $\FE\sim\AB$ and hence $V_t\sub F \sub D$, so again \CD\ does not separate~$V_t$.
\end{proof}

The sets $V_\cX$ will come in two types: they can be

\begin{itemize}
\item \cN-blocks (that is, maximal \cN-inseparable sets of vertices), or

\item `hubs' (defined below).
\end{itemize}
Nodes $\cX\in\cT$ such that $V_t$ is an \cN-block are \emph{block nodes\/}. A~node $\cX\in\cT$ such that $V_t = A\cap B$ for some $\AB\in t$ is a \emph{hub node} (and $V_t$ a \emph{hub}).%
   \COMMENT{}

In Example~\ref{block-cutvx}, the \cN-blocks were the (usual) blocks of~$G$; the hubs were singleton sets consisting of a cutvertex. Example~\ref{ex:td} will show that \cX\ can be a hub node and a block node at the same time. Every hub is a subset of a block: by~\eqref{cross-separators}, hubs are \cN-inseparable, so they extend to maximal \cN-inseparable sets.

Hubs can contain each other properly (Example~\ref{ex:td} below). But a hub $V_t$ cannot be properly contained in a separator $A\cap B$ of any $\AB\in t$. Let us prove this without assuming that $V_t$ is a hub:

\begin{lem}\label{nosmallhubs}
Whenever $\AB\in t\in\cT$, we have $A\cap B\sub V_t$. In particular, if $V_t\sub A\cap B$,  then $V_t = A\cap B$ is a hub with hub node~$t$.
\end{lem}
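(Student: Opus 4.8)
The statement to prove is Lemma~\ref{nosmallhubs}: whenever $\AB\in t\in\cT$, we have $A\cap B\sub V_t$; and the ``in particular'' clause follows immediately, since $V_t\sub A\cap B$ together with $A\cap B\sub V_t$ gives $V_t=A\cap B$, which is then by definition a hub with hub node~$t$. So the whole content is the inclusion $A\cap B\sub V_t$.

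Recall $V_t=\bigcap\{\,A'\mid (A',B')\in t\,\}$, so to show $A\cap B\sub V_t$ it suffices to fix an arbitrary separation $(A',B')\in t$ and prove $A\cap B\sub A'$. Since $\AB$ and $(A',B')$ both lie in the equivalence class~$t$, we know $\AB\sim(A',B')$. The plan is to split into the cases coming from the definition~\eqref{def_sim} of~$\sim$. If $\AB=(A',B')$ there is nothing to prove. Otherwise, by~\eqref{def_sim}, either $\BA$ is a predecessor of $(A',B')$ or $(B',A')$ is a predecessor of $\AB$ (after using symmetry of~$\sim$, which was established via~\eqref{eq_order_flip}). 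In the first case $\BA\le(A',B')$, i.e.\ $B\sub A'$; but also, applying the remark after~\eqref{nested} (``another consequence of $\AB\le\CD$ is that $\sep AB\sub C$''), from $\BA\le(A',B')$ we get $\sep BA=\sep AB\sub A'$. Combining, $A\cap B\sub B\sub A'$. In the second case $(B',A')\le\AB$, equivalently by~\eqref{eq_order_flip} $\BA\le(A',B')$ again, so the same argument applies; alternatively $(B',A')\le\AB$ gives $B'\sub A$ and $\sep{B'}{A'}=\sep{A'}{B'}\sub A$, hence $A\cap B\sub A'\cap B'\sub A$ — wait, that is the wrong direction, so I will phrase it via the flip: $(B',A')\pre\AB$ means $(B',A')<\AB$, so $\BA<(A',B')$ by~\eqref{eq_order_flip}, hence $B\sub A'$ and in particular $A\cap B\sub A'$.

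Since $(A',B')\in t$ was arbitrary, intersecting over all such $A'$ yields $A\cap B\sub\bigcap\{A'\mid(A',B')\in t\}=V_t$, as required. For the ``in particular'' part: if additionally $V_t\sub A\cap B$, then $V_t=A\cap B$; by definition a node~$t$ with $V_t=A\cap B$ for some $\AB\in t$ is a hub node and $V_t$ a hub, so $t$ is the hub node of the hub $V_t=A\cap B$.

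\textbf{Main obstacle.} The argument is essentially bookkeeping with the definition of~$\sim$ and the order~$\le$; the only subtle point is making sure that in \emph{every} case arising from $\AB\sim(A',B')$ one genuinely ends up with $B\sub A'$ (equivalently $\BA\le(A',B')$), so that $A\cap B\sub B\sub A'$. Concretely: in one branch of~\eqref{def_sim} we directly have $\BA\pre(A',B')$, hence $\BA\le(A',B')$; in the other branch, after symmetrizing, we have $(B',A')\pre\AB$, i.e.\ $(B',A')\le\AB$, which by~\eqref{eq_order_flip} is exactly $\BA\le(A',B')$. So both branches collapse to the single inequality $\BA\le(A',B')$, and from $B\sub A'$ the desired inclusion is immediate. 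I do not anticipate any real difficulty beyond keeping the flips straight.
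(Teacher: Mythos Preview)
Your proposal is correct and takes essentially the same approach as the paper: both arguments rest on the observation that if $\AB$ and $(A',B')$ are distinct elements of~$t$, then $\BA\le(A',B')$, whence $A\cap B\sub B\sub A'$. The paper phrases this as a one-line contradiction (pick $v\in(A\cap B)\sm V_t$, find $\CD\in t$ with $v\notin C$, contradict $B\sub C$), while you verify the inclusion directly; your split into two ``branches'' of~$\sim$ is harmless but unnecessary, since the definition~\eqref{def_sim} already gives $\BA\pre(A',B')$ outright in the non-trivial case.
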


\begin{proof}
Consider any vertex $v\in (A\cap B)\sm V_t$. By definition of~$V_t$, there exists a separation $\CD\in t$ such that $v\notin C$. This contradicts the fact that $B\sub C$ since $\AB\sim\CD$.
\end{proof}

\begin{lem}\label{lem_eq_block}
Every node of \cT\ is either a block node or a hub node.
\end{lem}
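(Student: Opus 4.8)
The plan is to combine Lemma~\ref{Vt_insep}, which says $V_t$ is \cN-inseparable, with Lemma~\ref{nosmallhubs}, which says $A\cap B\sub V_t$ for every $\AB\in t$ and that $V_t\sub A\cap B$ forces $V_t=A\cap B$ to be a hub with hub node~$t$. Since $V_t\sub A$ holds by definition of $V_t$ for every $\AB\in t$, it therefore suffices to show: if $t$ is not a block node, then $V_t\sub B$ for some $\AB\in t$.

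So suppose $t$ is not a block node. By Lemma~\ref{Vt_insep} the set $V_t$ is \cN-inseparable, and as it is not an \cN-block it is not a maximal such set; hence there is an \cN-inseparable set $b$ with $V_t\psub b$. Pick $v\in b\sm V_t$. Since $V_t=\bigcap\{A:\AB\in t\}$, the vertex $v$ lies outside some $A$-side; that is, there is $\CD\in t$ with $v\notin C$, and then $v\in D\sm C$ because $v\in V(G)=C\cup D$. I claim $V_t\sub D$. Indeed, if some $w\in V_t\sm D$, then $w\in C\sm D$ (as $V_t\sub C$), so the separation $\CD$, meeting $C\sm D$ in~$w$ and $D\sm C$ in~$v$, separates the set $\{v,w\}$; but $\{v,w\}\sub b$ (recall $V_t\sub b$) and $\CD\in t\sub\cN$, contradicting the \cN-inseparability of~$b$. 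Hence $V_t\sub D$, so $V_t\sub C\cap D$, and Lemma~\ref{nosmallhubs} finishes the proof: $V_t=C\cap D$ is a hub and $t$ is a hub node.

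There is no real obstacle here; the only thing to notice is that a vertex missing from $V_t$ must be missing from one of the sets $A$ with $\AB\in t$, which is immediate from the definition of $V_t$ as an intersection. One should resist the temptation to single out a `minimal' separation in~$t$: by~\eqref{incomp} the separations in~$t$ are pairwise incomparable, so instead, as above, one simply takes any $\CD\in t$ chosen so as to miss the witness vertex~$v$.
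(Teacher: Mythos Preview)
Your proof is correct and is essentially the same argument as the paper's, just with the contrapositive taken in the opposite direction: the paper assumes $t$ is not a hub node and shows $V_t$ is a maximal \cN-inseparable set, while you assume $V_t$ is not maximal and deduce $V_t\sub C\cap D$ for some $\CD\in t$. The core step---picking a witness vertex outside~$V_t$, finding a separation in~$t$ that excludes it from its $A$-side, and then comparing with a vertex of~$V_t$---is identical in both.
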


\begin{proof}
Suppose $\cX\in\cT$ is not a hub node; we show that $\cX$ is a block node. By Lemma~\ref{Vt_insep}, $V_\cX$~is \cN-inseparable. We show that $V_t$~is maximal in $V(G)$ with this property: that for every vertex $x\notin V_t$ the set $V_t\cup\{x\}$ is not \cN-inseparable.

By definition of~$V_t$, any vertex $x\notin V_t$ lies in $B\sm A$ for some $(A,B)\in t$. Since \cX\ is not a hub node,  Lemma~\ref{nosmallhubs} implies that $V_t\not\sub A\cap B$. As $V_t\sub A$, this means that $V_t$ has a vertex in~$A\sm B$. Hence $(A,B)$ separates  $V_t\cup\{x\}$, as desired.
\end{proof}

Conversely, all the \cN-blocks of $G$ will be parts of our \td:

\begin{lem}\label{blocksarenodes}
Every \cN-block is the set $V_\cX$ for a node $\cX$ of \cT.
\end{lem}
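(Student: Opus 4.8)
The plan is to start from an arbitrary $\cN$-block $b$ and exhibit a node $\cX$ of $\cT$ with $V_\cX = b$. The natural candidate is built from the separations in $\cN$ that have $b$ on one side and are as `tight against $b$' as possible. Concretely, for each separation of $\cN$ that weakly separates something from $b$, orient it as $\AB$ with $b\sub A$; among all such separations consider the $\le$-minimal ones, i.e.\ the $\le$-predecessors, in $(\cN,\le)$, of separations lying strictly above them, or more precisely the set
$$\cX := \{\,\AB\in\cN \mid b\sub A,\ \text{and no }\EF\in\cN\text{ with }b\sub E\text{ satisfies }\EF<\AB\,\}.$$
First I would check that any two separations in $\cX$ are equivalent under $\sim$: given $\AB,\CD\in\cX$, nestedness of $\cN$ forces $\AB$ comparable with $\CD$ or with $\DC$; the case $\AB\le\CD$ or $\CD\le\AB$ is impossible by minimality unless they are already related as predecessors, and the case $\AB\le\DC$ (or its symmetric variants) is excluded because it would put $b\sub A\sub C$ and $b\sub A\sub D$, forcing $b$ into the small set $\sep CD$, which contradicts $|b|>k$-type reasoning — here it contradicts that $\CD$ does not weakly separate $b$ into $\sep CD$ on one side unless $b$ is small, but $b$ being an $\cN$-block is not small (a maximal $\cN$-inseparable set cannot lie inside a separator, else it could be enlarged). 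So all separations in $\cX$ lie in a single equivalence class, hence $\cX$ is contained in a node of $\cT$.

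Next I would show $\cX$ is in fact a full equivalence class, i.e.\ a node: if $\CD\sim\AB$ for some $\AB\in\cX$, then by definition of $\sim$ either $\CD=\AB$ or $\DC\pre\AB$, i.e.\ $\DC<\AB$ with nothing strictly between. In the latter case $\AB\in\cX$ gives $b\sub A$; I must argue $b\sub D$ (so that $\CD$, oriented as $\DC$, again has $b$ on the ``$D$'' side and is minimal). Since $\DC<\AB$ and $b\sub A$, and since $\CD\in\cN$ is nested with everything, $b$ lies entirely in $C$ or entirely in $D$; if $b\sub C$ then $\DC$ and $\AB$ are two separations each with $b$ on the larger side and $\DC<\AB$ would violate minimality of $\AB$ in $\cX$ unless $\DC$ is itself in $\cX$ — but actually this is the point: I want to conclude $b\sub D$, equivalently $b\not\sub C\sm D$, which follows because if $b\sub C$ then replacing $\AB$ by $\DC$ in the definition shows $\AB\notin\cX$, a contradiction. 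Hence $b\sub D$, and then minimality of $\DC$ follows from minimality of $\AB$ together with $\DC\pre\AB$. So $\CD$ (oriented suitably) lies in $\cX$, proving $\cX\in V(\cT)$.

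Finally I would compute $V_\cX = \bigcap\{A : \AB\in\cX\}$ and show it equals $b$. The inclusion $b\sub V_\cX$ is immediate since every $\AB\in\cX$ has $b\sub A$. For $V_\cX\sub b$: $V_\cX$ is $\cN$-inseparable by Lemma~\ref{Vt_insep}, and $b$ is a \emph{maximal} $\cN$-inseparable set, so it suffices to show $V_\cX\cup b$ is $\cN$-inseparable, whence $V_\cX\sub b$ by maximality. Suppose some $\CD\in\cN$ separates $V_\cX\cup b$; since $b$ is $\cN$-inseparable, $b$ lies wholly in one side, say $b\sub C$, and then some vertex of $V_\cX$ lies in $D\sm C$. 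Orient $\CD$ as is (with $b\sub C$); by the descending-chain argument in the proof of Lemma~\ref{Vt_insep} there is a $\le$-predecessor $\EF$ of the relevant separation with $b\sub E$, and either $\EF\in\cX$ or something below it is, forcing $V_\cX\sub E\sub C$ — contradicting that a vertex of $V_\cX$ lies outside $C$. Hence no such $\CD$ exists, $V_\cX\cup b$ is $\cN$-inseparable, and $V_\cX = b$.

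The main obstacle I anticipate is the bookkeeping in the middle step: verifying cleanly that the minimal-orientation set $\cX$ is exactly one $\sim$-class, not just contained in one, and in particular handling the possibility that for a given separation of $\cN$ \emph{both} orientations have $b$ on the ``large'' side (which cannot happen for a proper separation by~\eqref{comp}, but must be explicitly invoked) and the possibility that the $\le$-minimum is not attained — here finiteness of $G$ rescues us, exactly as in the proof of Theorem~\ref{thm_tree}. Everything else reduces to the already-established facts that separators are not maximal $\cN$-inseparable sets and that $\cN$ is nested.
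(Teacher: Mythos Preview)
Your approach for the case where $b$ is not small is essentially the paper's: define $t(b)$ as the set of $\le$-minimal $\AB\in\cN$ with $b\sub A$, show it is a $\sim$-class, and conclude $V_{t(b)}=b$. Two small remarks on that part. First, your middle step has the orientations swapped: from $\DC\pre\AB$ you want to conclude $b\sub C$ (so that $\CD\in\cX$), and the reason is that $b\sub D$ together with $\DC<\AB$ would contradict the minimality of~$\AB$. Second, your final step is easier than you make it: once $b\sub V_\cX$, Lemma~\ref{Vt_insep} gives that $V_\cX$ is $\cN$-inseparable, and maximality of $b$ yields $V_\cX=b$ at once; the detour via $V_\cX\cup b$ is redundant since $b\sub V_\cX$ already.

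The genuine gap is your treatment of the small case. Your assertion that ``a maximal $\cN$-inseparable set cannot lie inside a separator, else it could be enlarged'' is false. Example~\ref{ex:td} in the paper exhibits a nested system in which $A\cap B$ is an $\cN$-block. In that example your candidate $\cX$ contains both $\AB$ and $\BA$: each has $b=A\cap B$ in its first side, and one checks that each is $\le$-minimal with that property (every $(Y_i,X_i)$ lies strictly above one of them). But $\AB\not\sim\BA$, so $\cX$ straddles the two distinct nodes $[\AB]$ and~$[\BA]$ and is not itself a node of~$\cT$. The paper therefore handles the small case by a separate argument: one first observes $b=A\cap B$ for some $\AB\in\cN$ (using~\eqref{cross-separators}), sets $t:=[\AB]$, and then shows $V_t\sub A\cap B$ directly, invoking~\eqref{emptycorner} to analyse how a separation $\EF\in\cN$ witnessing the maximality of $b$ must sit relative to~$\AB$.
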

\begin{proof}
Consider an arbitrary \cN-block $b$.

Suppose first that $b$ is small. Then there exists a separation $\AB\in\cN$ with $b\sub\sep AB$. As \cN\ is nested, $A\cap B$ is \cN-inseparable by~\eqref{cross-separators}, so in fact $b = \sep AB$ by the maximality of~$b$. We show that $b = V_t$ for $t = [\AB]$. By Lemma~\ref{nosmallhubs}, it suffices to show that $V_t\sub b = A\cap B$. As $V_t\sub A$ by definition of $V_t$, we only need to show that $V_t\sub B$.
Suppose there is an $x\in V_t\sm B$. As $x\notin A\cap B = b$, the maximality of~$b$ implies that there exists a separation ${\EF\in\cN}$ such that
\begin{equation*}
F \not\supseteq b \sub E \mbox{ and } x\in F\sm E\eqno(*)
\end{equation*}
(compare the proof of Lemma~\ref{block_lem}). By~$(*)$, all corners of the cross-diagram $\{\AB,\EF\}$ other than \sep BF\ contain vertices not in the centre. Hence by~\eqref{emptycorner}, the only way in which \AB\ and \EF\ can be nested%
   \COMMENT{}
   is that $B\cap F$ does lie in the centre, i.e.\ that $\BA\le\EF$. Since $\BA\ne\EF$, by~$(*)$ and $b= A\cap B$, this means that \BA\ has a successor $\CD\le\EF$. But then $\CD\sim\AB$ and $x\notin E\supseteq C\supseteq V_t$,%
   \COMMENT{}
   a contradiction.

Suppose now that $b$ is not small. We shall prove that $b = V_t$ for $t = t(b)$, where $t(b)$ is defined as the set of separations \AB\ that are minimal with $b \sub A$. Let us show first that $t(b)$ is indeed an equivalence class, i.e., that the separations in $t(b)$ are equivalent to each other but not to any other separation in~\cN.

Given distinct $\AB,\CD\in\cX(b)$, let us show that $\AB\sim\CD$. Since both \AB\ and \CD\ are minimal as in the definition of~$t(b)$, they are incomparable. But as elements of \cN\ they are nested, so \AB\ is comparable with~\DC. If $\AB\leq\DC$ then $b\sub A\cap C \sub \sep DC$, which contradicts our assumption that $b$ is not small. Hence $\DC<\AB$. To show that \DC\ is a predecessor of~\AB, suppose there exists a separation $\EF\in\cN$ such that $\DC<\EF<\AB$. This contradicts the minimality either of \AB, if $b\sub E$, or of~\CD, if $b\sub F$. Thus, $\CD\sim\AB$ as desired.

\sloppy
Conversely, we have to show that every $\EF\in\cN$ equivalent to some ${\AB\in t(b)}$ also lies in~$t(b)$. As $\EF\sim\AB$, we may assume that ${\FE < \AB}$. Then $b\not\sub F$ by the minimality of \AB\ as an element of~$t(b)$, so $b\sub E$. To show that \EF\ is minimal with this property, suppose that $b\sub X$ also for some $\XY\in\cN$ with $\XY<\EF$.
Then \XY\ is incomparable with~\AB\,: by \eqref{incomp} we cannot have $\AB\le\XY<\EF$, and we cannot have $\XY < \AB$ by the minimality of~\AB\ as an element of~$t(b)$.
But ${\XY\|\AB}$, so \XY\ must be comparable with~\BA. Yet if $\XY\le\BA$, then $b\sub X\cap A\sub\sep BA$, contradicting our assumption that $b$ is not small, while  $\BA< \XY < \EF$ is impossible, since \BA\ is a predecessor of~\EF. 

\fussy
Hence $t(b)$ is indeed an equivalence class, i.e., $t(b)\in {V(\cT)}$. By definition of~$t(b)$, we have $b\sub \bigcap\,\{\,A\mid\AB\in t(b)\,\} =  V_{t(b)}$. The converse inclusion follows from the maximality of $b$ as an \cN-inseparable set.%
   \COMMENT{}
\end{proof}

We have seen so far that the parts $V_t$ of our intended \td\ associated with~\cN\ are all the \cN-blocks of~$G$, plus some hubs. The following proposition shows what has earned them their name:

\begin{prop}\label{funnyhub}
A hub node $\cX$ has degree at least~3 in~\cT, unless  it has the form $t = \{\AB,\CD\}$ with $A\supsetneq D$ and $B=C$ (in which case it has degree~2).
\end{prop}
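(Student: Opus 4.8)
The plan is to use the description of nodes and their degrees from~\eqref{nodes}: a hub node $\cX$ has degree $|\cX|$ in~\cT, so it suffices to analyse the structure of the equivalence class~$\cX$. Since $\cX$ is a hub node, there is some $\AB\in\cX$ with $V_\cX = A\cap B$. If $|\cX|=1$, then $\cX=\{\AB\}$; but $\BA\pre\AB$ would be needed for $\cX$ to have only one element in a nontrivial way, and more to the point a single-separation class gives $\cX$ degree~$1$, and we must rule this out — I would show that $V_\cX=A\cap B$ together with $\AB$ being proper forces $\BA$ to have a predecessor or $\AB$ to have a successor in~\cN, via Lemma~\ref{blocksarenodes}'s style of argument (if $\cX=\{\AB\}$ is a leaf, then $A\sm B\neq\es$ contains a vertex $v$ with $v\notin V_\cX=A\cap B$, yet $v\in A$, and no separation in $\cX$ excludes it — consistent, so instead one argues that \cN\ being a separation system of proper separations and $\cX$ being an equivalence class forces $|\cX|\ge 2$ here because a class of size~1 must satisfy $\BA\pre\CD$ for no other reason than triviality). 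Let me instead take the cleaner route below.

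First I would record that, by~\eqref{nodes}, $\deg_\cT(\cX)=|\cX|$, so the claim is equivalent to: \emph{if $\cX$ is a hub node with $|\cX|\le 2$, then $|\cX|=2$ and $\cX=\{\AB,\CD\}$ with $A\supsetneq D$ and $B=C$.} So suppose $\cX$ is a hub node with $|\cX|\le 2$. The case $|\cX|=1$ is impossible: if $\cX=\{\AB\}$, then by~\eqref{def_sim} applied to the pair $(\AB,\AB)$ the class is a singleton only when $\AB$ has no predecessor-partner, but every separation in \cN\ lies in {\em some} class and, crucially, for a hub node $V_\cX=A\cap B\subsetneq A$ (as $\AB$ is proper), so there is a vertex $v\in A\sm B$; since $\AB$ is the only separation in~$\cX$ and $v\in A$, nothing in the definition of $V_\cX$ excludes $v$ — contradiction with $v\notin V_\cX$. (If the singleton case genuinely cannot be excluded this way, it is excluded because a hub node by Lemma~\ref{nosmallhubs} has $V_\cX=A\cap B$ and Proposition contexts guarantee degree; I will make this rigorous by noting $\BA\in\cN$ too and $\BA\notin\cX=\{\AB\}$, so $\BA$ lies in a different class, hence $\BA$ has a predecessor or successor distinct from~$\AB$, and tracing it back through $\sim$ produces a second element of~$\cX$.)

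So $|\cX|=2$, say $\cX=\{\AB,\CD\}$ with $\AB\ne\CD$. By~\eqref{incomp} these two are incomparable under~$\le$; since they are nested, $\AB$ is comparable with~$\DC$. By~\eqref{def_sim}, either $\BA\pre\CD$ or $\DC\pre\AB$; by symmetry (relabelling the pair) assume $\BA\pre\CD$, so $\BA<\CD$, i.e.\ $B\subseteq C$ and $A\supseteq D$, with no separation of~\cN\ strictly between. Now $V_\cX = A\cap C$ (intersecting the two ``$A$-sides'' $A$ and~$C$), and this equals the hub $A\cap B$ (or $C\cap D$) for one of the two separations. Using $B\subseteq C$ we get $A\cap C\supseteq A\cap B$, and $V_\cX=A\cap B$ would say $A\cap C\subseteq A\cap B$, forcing $A\cap(C\sm B)=\es$; combined with $A\supseteq D$ and the properness of~\CD, I would deduce $C=B$: indeed if $x\in C\sm B$ then $x\notin A$, but also $x\notin D$ (as $D\subseteq A$), so $x\in C\sm(B\cup D)$, which via~\eqref{cross-separators} or a direct check contradicts nestedness being ``tight'' at this class — more simply, $\BA\pre\CD$ with $B\subsetneq C$ would let us interpose a separation, contradicting predecessor-ness, \emph{unless} the gap $C\sm B$ is absorbed, i.e.\ $B=C$. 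Then $A\supseteq D$, and $A=D$ would make $\AB=\DC$, contradicting incompatibility with~\eqref{comp}; so $A\supsetneq D$, giving exactly the stated form.

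The main obstacle I anticipate is the bookkeeping in the $|\cX|=2$ case: pinning down that $\BA\pre\CD$ forces $B=C$ (not merely $B\subseteq C$) requires showing that any vertex in $C\sm B$ would either produce an intermediate separation in~\cN\ — contradicting that $\BA$ is a \emph{predecessor} of~\CD\ — or violate that $V_\cX$ is a hub. This is where I would spend the care: construct from $x\in C\sm B$ the separation $(B\cup\{x\},\, \dots)$ or use the corner separations of the cross-diagram $\{\AB,\CD\}$ together with~\eqref{cornersnested} to exhibit something strictly between $\BA$ and~\CD, and check it lies in~\cN. Ruling out the singleton class should be routine once the second element of~$\cX$ is produced from $\BA\notin\cX$ by chasing the $\sim$-relation, but it needs to be written out cleanly.
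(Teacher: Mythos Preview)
Your overall strategy matches the paper's: use $\deg_\cT(\cX)=|\cX|$ from~\eqref{nodes}, rule out $|\cX|=1$, and for $|\cX|=2$ deduce $B=C$ and $A\supsetneq D$. But the execution has a genuine gap in the key step, and you are making the argument much harder than it is.

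For $|\cX|=1$: your first argument is already correct and is exactly what the paper does. If $\cX=\{\AB\}$ then by~\eqref{VtDef} $V_\cX = A$, while the hub assumption gives $V_\cX=A\cap B\subsetneq A$ (properness). Contradiction. There is no need for the alternative route via $\BA$ and chasing~$\sim$; drop it.

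For $|\cX|=2$: fix at the outset the $\AB\in\cX$ witnessing $V_\cX=A\cap B$ (you leave this ambiguous). You then correctly reach $B\subseteq C$, $D\subseteq A$, $V_\cX=A\cap C$, and hence $A\cap(C\sm B)=\emptyset$. At this point you are one line from the end but miss it: $\AB$ is a \emph{separation}, so $A\cup B=V(G)$; thus any $x\in C\sm B$ lies in $A$, giving $C\sm B\subseteq A$ and hence $C\sm B=A\cap(C\sm B)=\emptyset$. So $B=C$, and then $D\subsetneq A$ follows since $\BA<\CD$ is strict. This is precisely the paper's argument.

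Your proposed alternatives for this step are wrong. The claim that ``$\BA\pre\CD$ with $B\subsetneq C$ would let us interpose a separation'' is false: predecessor means no element of~$\cN$ lies strictly between, not that no separation of~$G$ does, and there is no mechanism forcing a separation like $(B\cup\{x\},\dots)$ or a corner separation to belong to~$\cN$. The appeal to~\eqref{cross-separators} or ``tightness'' is likewise irrelevant here. The whole difficulty you anticipate in the final paragraph does not exist once you use $A\cup B=V(G)$.
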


\begin{proof}
Let $\AB\in t$ be such that  $V_t=A\cap B$. As $\AB\in t$ but $V_t\neq A$, we have $d(t) = |t|\ge 2$; cf.~\eqref{nodes}. Suppose that $d(t)=2$, say $t = \{\AB,\CD\}$. Then $B\sub C$ by definition of~$\sim$,%
   \COMMENT{}
   and $C\sm B = (C\cap A)\sm B = V_t\sm B = \es$ by definition of~$V_t$%
   \COMMENT{}
   and $V_t\sub A\cap B$. So $B=C$. As \AB\ and \CD\ are equivalent but not equal, this implies $D\subsetneq A$.
\end{proof}

   \begin{figure}[htpb]
\centering
   	  \includegraphics{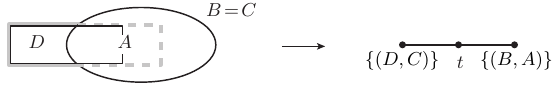}%
   \COMMENT{}
   	  \caption{A hub node $t = \{\AB,\CD\}$ of degree~2}
   \label{fig:funnyhub}\vskip-6pt\vskip0pt
   \end{figure}

Figure~\ref{fig:funnyhub} shows that the exceptional situation from Proposition~\ref{funnyhub} can indeed occur. In the example, we have $\cN = \{\AB, \BA, \CD, \DC\}$ with ${B=C}$ and $D\subsetneq A$. The structure tree $\cT$ is a path between two block nodes $\{\DC\}$ and~$\{\BA\}$ with a central hub node $\cX = \{\AB,\CD\}$,%
   \COMMENT{}
   whose set $V_\cX = A\cap B$ is not a block since it is properly contained in the \cN-inseparable set $B=C$.

Our last example answers some further questions about the possible relationships between blocks and hubs that will naturally come to mind:

   \begin{figure}[htpb]
\centering
   	  \includegraphics{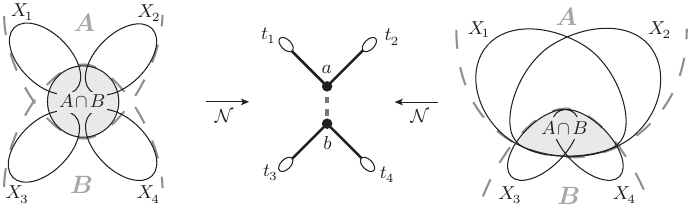}
   	  \caption{The two nested separation systems of Example~\ref{ex:td}, and their common structure tree}
   \label{fig:td}\vskip-12pt\vskip0pt
   \end{figure}

\begin{ex}\label{ex:td}
Consider the vertex sets $X_1,\dots,X_4$ shown on the left of Figure~\ref{fig:td}. Let $A$ be a superset of $X_1\cup X_2$ and $B$ a superset of $X_3\cup X_4$, so that $A\cap B\not\sub X_1\cup\dots\cup X_4$ and different $X_i$ do not meet outside~$A\cap B$. Let $\cN$ consist of \AB, \BA, and $(X_1,Y_1),\dots,(X_4,Y_4)$ and their inverses $(Y_i,X_i)$, where $Y_i := (A\cap B)\cup\bigcup_{j\ne i} X_j$. The structure tree $\cT = \T$ has four block nodes $\cX_1,\dots,\cX_4$, with $t_i = [(X_i,Y_i)]$ and $V_{\cX_i} = X_i$, and two central hub nodes
 $$a = \{\AB, (Y_1,X_1), (Y_2,X_2)\}\quad\text{and}\quad b = \{\BA, (Y_3,X_3), (Y_4,X_4)\}$$
joined by the edge~$\{\AB,\BA\}$. The hubs corresponding to $a$ and~$b$ coincide: they are $V_a = A\cap B = V_b$, which is also a block.

Let us now modify this example by enlarging $X_1$ and $X_2$ so that they meet outside $A\cap B$ and each contain $A\cap B$. Thus, $A = X_1\cup X_2$. Let us also shrink $B$ a little, down to $B = X_3\cup X_4$ (Fig.~\ref{fig:td}, right). The structure tree \cT\ remains unchanged by these modifications, but the corresponding sets $V_t$ have changed:
$$V_b = A\cap B\ \subsetneq\ X_1\cap X_2 = X_1\cap Y_1 = X_2\cap Y_2 = V_a,$$
and neither of them is a block, because both are properly contained in~$X_1$, which is also \cN-inseparable.
\end{ex}

Our next lemma shows that deleting a separation from our nested system~\cN\ corresponds to contracting an edge in the structure tree~\T. For a separation $\AB$ that belongs to different systems, we write  $[\AB]_{\cN}$ to indicate in which system \cN\ we are taking the equivalence class.

\begin{lem}\label{lem_extension}
Given $\AB\in\cN$, the tree $\cT':= \cT(\cN')$ for
 $$\cN' = \cN \sm \{\AB,\BA\}$$
arises from $\cT =\T$ by contracting the edge $e = \{\AB,\BA\}$.
  The contracted node $z$ of~$\cT'$ satisfies $z=x\cup y\sm e$ and $V_{z} = V_{x}\cup\, V_{y}$, where $x = [\AB]_{\cN}$ and $y = [\BA]_{\cN}$, and $V(\cT') \sm \{z\} = V(\cT) \sm \{x,y\}$.%
   \COMMENT{}%
   \footnote{The last identity says more than that there exists a canonical bijection between $V(\cT') \sm \{z\}$ and $V(\cT) \sm \{x,y\}$: it says that the nodes of $\cT-\{x,y\}$ and~$\cT'-z$ are the same also as sets of separations.}%
   \COMMENT{}
\end{lem}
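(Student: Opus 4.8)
The plan is to understand precisely how the equivalence relation $\sim$ changes when the pair $\{\AB,\BA\}$ is removed from $\cN$, and to show that the only effect is to merge the two classes $x=[\AB]_\cN$ and $y=[\BA]_\cN$ into one class $z$, all other classes being unchanged. First I would observe that $\sim$ is defined purely in terms of the predecessor relation in $(\cN,\le)$, so the crux is to compare the predecessor relations of $(\cN,\le)$ and $(\cN',\le)$. The key local fact is that in $(\cN,\le)$ the separation $\AB$ is a predecessor of $\BA$ is \emph{false} (they are incomparable, being proper, by~\eqref{comp}); what matters instead is: if $\sepn EF \in \cN'$ was a predecessor of $\sepn GH \in \cN'$ in $(\cN,\le)$, is it still one in $(\cN',\le)$, and conversely? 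Removing elements can only create new predecessor relations (by deleting an intermediate element), and the only elements deleted are $\AB$ and $\BA$. So I would show: $\sepn EF <\sepn GH$ with no $\cN'$-element strictly between them, but some $\cN$-element strictly between them, forces that intermediate element to be $\AB$ or $\BA$; and because $\AB,\BA$ are incomparable and $\le$ is transitive, at most one of them can lie strictly between a fixed comparable pair.

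Next I would carry out the identification of the classes. For a node $w \in V(\cT)$ with $w \ne x$ and $w \ne y$: I claim the separations forming $w$ are still pairwise $\sim$-equivalent in $\cN'$, and not equivalent to anything new. Pairwise equivalence within $w$ can only fail if witnessing predecessor-chains passed through $\AB$ or $\BA$; I would argue that if $\sepn EF,\sepn GH \in w$ and (say) $\sepn FE \pre \sepn GH$ in $\cN$ via directly being a predecessor, this relation does not involve $\AB,\BA$ at all, so it survives. Conversely, no separation outside $w$ becomes equivalent to one inside, because that would again require a newly-created predecessor relation, i.e. one of $\AB,\BA$ sitting strictly between two comparable separations; but chasing through the definition of $\sim$ and using~\eqref{comp} rules this out — essentially, if $\AB$ were the unique element between $\sepn FE$ and $\sepn GH$, then in $\cN'$ we get $\sepn FE \pre \sepn GH$, but I must check this does not illegitimately enlarge $w$; here I use that $\AB$ itself (together with $\BA$) forms the classes $x,y$, so the "gap" created sits precisely at the $x$--$y$ edge. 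For the merged class: $z := x \cup y \sm e$, i.e. all separations in $x$ or in $y$ except $\AB$ and $\BA$ themselves. I would verify every two of these are $\sim_{\cN'}$-equivalent: within $x$ or within $y$ this is inherited; across, if $\sepn EF \in x\sm\{\AB\}$ and $\sepn GH\in y\sm\{\BA\}$, then $\sepn FE \pre_\cN \AB$ (or $\AB\pre_\cN\sepn EF$) and $\sepn HG\pre_\cN\BA$ (or $\BA\pre_\cN\sepn HG$); using $\AB \le \CD$ for appropriate predecessors/successors and transitivity, I get a chain from $\sepn FE$ to $\sepn GH$ whose only possible intermediate $\cN$-element is $\AB$ or $\BA$ — hence in $\cN'$ they are each other's predecessors, so equivalent. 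That $z\ne x$ and $z\ne y$ as sets (when both are nonempty beyond $e$) and that $z$ is a single class follows.

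Then the tree-theoretic conclusion is almost formal. Since $E(\cT') = \{\{\sepn GH,\sepn HG\}\mid \sepn GH\in\cN'\} = E(\cT)\sm\{e\}$, and the node set of $\cT'$ is $(V(\cT)\sm\{x,y\})\cup\{z\}$ with every edge $e'\ne e$ incident to the same nodes in $\cT'$ as in $\cT$ after identifying $x,y$ with $z$ — because incidence is "$e'\cap w \ne\es$" and $e'\cap z = (e'\cap x)\cup(e'\cap y)$ for $e'\ne e$ — the graph $\cT'$ is exactly $\cT$ with the edge $e$ contracted. For the parts, $V_z = \bigcap\{A' : \sepn{A'}{B'}\in z\}$; since $z = (x\cup y)\sm e$ and, by~\eqref{incomp} applied inside $x$, adding or removing the extreme element $\AB$ of a class does not change the intersection of the $A$-sides — more carefully, $V_x = \bigcap_{\sepn{A'}{B'}\in x} A'$ and $\AB\in x$ with $A$ one of the intersectands, similarly $V_y$ with $B$ an intersectand of the $A$-sides of $y$ — one computes $V_z = V_x \cap (\text{the }A\text{-sides of }y) $. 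Here I expect the main obstacle: showing cleanly that $V_z = V_x \cup V_y$ rather than an intersection. The point is that $x$ and $y$ sit on opposite sides of the separation $\AB$: for $\sepn{A'}{B'}\in x$ we have $\sepn{A'}{B'} \ge$ some predecessor of $\AB$ or $\le \AB$, forcing $A \sub A'$ or $A' \sub A$ appropriately so that $\bigcap_{x} A' \sub A$ fails in general but the relevant containments give $V_x \sub A$; dually $V_y \sub B$; and a vertex in $V_z$ lies in all $A'$-sides for $\sepn{A'}{B'}\in x$ and all $A'$-sides for $y$, and one shows using nestedness that such a vertex lies in $V_x$ or in $V_y$ — essentially because the separator $A\cap B$ is contained in both $V_x$ and $V_y$ by Lemma~\ref{nosmallhubs}, and off the separator each side is governed by only one of the two sub-classes. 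I would spell this out by taking $v\in V_z\sm V_x$, producing (as in Lemma~\ref{Vt_insep}/\ref{nosmallhubs}) a separation $\sepn{A'}{B'}\in x$ with $v\notin A'$, deriving $v\in B\sm A$, and then checking $v$ lies in every $A$-side of $y$, i.e. $v\in V_y$. The reverse inclusion $V_x\cup V_y\sub V_z$ is easier: every intersectand of $V_z$ is an intersectand of $V_x$ or, after a $\le$-comparison, dominates one, so $V_x\sub V_z$ and $V_y\sub V_z$. Finally $V(\cT')\sm\{z\} = V(\cT)\sm\{x,y\}$ as sets of separations is immediate from the class analysis above, since those classes were shown unchanged.
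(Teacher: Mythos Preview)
Your treatment of the equivalence classes is essentially the paper's argument: the paper packages the whole node analysis into one biconditional claim~$(\diamond)$ --- that $\CD,\EF\in\cN'$ are $\sim_{\cN'}$-equivalent iff they are $\sim_\cN$-equivalent or both lie in $x\cup y\sm e$ --- and proves both directions just as you sketch. Two small points: your parenthetical ``(or $\AB\pre_\cN\EF$)'' is wrong, since by the definition of~$\sim$ the relation $\EF\sim\AB$ with $\EF\ne\AB$ gives exactly $\FE\pre\AB$, not an alternative; and the step ``only possible intermediate $\cN$-element is $\AB$ or $\BA$'' needs an argument. The paper handles this by invoking Lemma~\ref{path_in_T} and Theorem~\ref{thm_tree} (a second intermediate element would yield a second path in the tree~$\cT$), but one can also argue directly from nestedness and~\eqref{comp} as you suggest.

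Where you genuinely diverge is in proving $V_z = V_x\cup V_y$. Your vertex-chasing does work: for $v\in V_z\sm V_x$, the only $\sepn{A'}{B'}\in x$ with $v\notin A'$ must be $\AB$ itself (since $x'\sub z$), whence $v\in B\sm A$; and then $v$ lies in every first coordinate of $y$ because those are $B$ and the first coordinates of~$y'\sub z$. The paper's route is more direct: with $x' := x\sm\{\AB\}$ and $y' := y\sm\{\BA\}$, the relation $\BA\le\EF$ for all $\EF\in x'$ gives $B\sub V_{x'}$, hence $V_{x'} = V_x\cup B$ (using $V_x = V_{x'}\cap A$ and $A\cap B\sub V_x$ from Lemma~\ref{nosmallhubs}); symmetrically $V_{y'} = V_y\cup A$; and then
\[
V_z \;=\; V_{x'}\cap V_{y'} \;=\; (V_x\cup B)\cap(V_y\cup A) \;=\; V_x\cup V_y,
\]
the last equality by distributivity together with $V_x\sub A$, $V_y\sub B$, and $A\cap B\sub V_x\cap V_y$. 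This avoids the case analysis and makes both inclusions fall out at once.
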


\begin{proof}
To see that $V(\cT') \sm \{z\} = V(\cT) \sm \{x,y\}$ and $z=x\cup y\sm e$, we have to show for all $\CD\in\cN'$ that $[\CD]_{\cN}=[\CD]_{\cN'}$ unless $[\CD]_{\cN}\in\{x,y\}$, in which case $[\CD]_{\cN'}=x\cup y\sm e$. In other words, we have to show: 
$$\begin{minipage}[c]{0.85\textwidth}\it
Two separations $\CD, \EF\in\cN'$ are equivalent in $\cN'$ if and only if  they are equivalent in \cN\ or are both in $x\cup y\sm e$.
\end{minipage}\eqno{(\diamond)}$$
Our further claim that $\cT'=\cT/e$, i.e.\ that the node-edge incidences in $\cT'$ arise from those in~$\cT$ as defined for graph minors, will follow immediately from the definition of these incidences in $\cT$ and~$\cT'$.%
   \COMMENT{}
 
Let us prove the backward implication of~$(\diamond)$ first.
As $\cN'\sub\cN$, predecessors in $(\cN,\le)$ are still predecessors in~$\cN'$, and hence $\CD\sim_\cN\EF$ implies $\CD\sim_{\cN'}\EF$. Moreover if $\CD\in x$ and $\EF\in y$ then, in~\cN, \DC~is~a predecessor of \AB\ and \AB\ is a predecessor of \EF. In~$\cN'$, then, \DC\ is a predecessor of \EF, since by Lemma~\ref{path_in_T} and Theorem~\ref{thm_tree} there is no separation $(A',B')\ne\AB$ in~$\cN$ that is both a successor of \DC\ and a predecessor of~\EF. Hence $\CD\sim_{\cN'}\EF$.

For the forward implication in~$(\diamond)$ note that if \DC\ is a predecessor of \EF\ in $\cN'$ but not in~\cN, then in \cN\ we have a sequence of predecessors $\DC<\AB<\EF$ or  $\DC<\BA<\EF$. Then one of \CD\ and \EF\ lies in $x$ and the other in~$y$, as desired.

It remains to show that $V_{z} = V_{x}\cup\, V_{y}$. Consider the sets
 $${x'} := x \sm \{\AB\}\quad\text{and}\quad {y'} := y \sm \{\BA\}\,;$$
then $z = y' \cup x'$.%
   \COMMENT{}
   Since all $\EF \in {x'}$ are equivalent to $\AB$ but not equal to it, we have $\BA \le \EF$ for all those separations. That is,
\begin{equation}\label{bincluded}
B\ \sub \bigcap_{\EF\, \in\, {x'}}\!E\ = V_{x'}.
\end{equation}
By definition of $V_{x}$ we have $V_{x} = V_{x'} \cap A$. Hence \eqref{bincluded} yields ${V_{x'} = V_{x} \cup\, (B \sm A)}$, and since $\sep AB \sub V_{x}$ by Lemma~\ref{nosmallhubs}, we have $V_{x'} = V_{x} \cup B$. An analogous argument yields
 $$V_{y'}= \bigcap_{\EF\, \in\, {y'}}\!E\ =\ V_{y} \cup A.$$
Hence,
\vskip-18pt\begin{eqnarray*}
V_{z} &=& \bigcap_{\EF\, \in\, z}E\\
&=&V_{x'} \cap V_{y'} \\
&=& (V_{x} \cup B) \cap (V_{y} \cup A)\\
&=& (V_{x} \cap V_{y}) \cup (V_{x} \cap A) \cup (V_{y} \cap B) \cup (B \cap A)\\
&=& (V_{x} \cap V_{y}) \cup V_{x}\cup V_{y} \cup (B \cap A)\\
&=&V_{x} \cup V_{y}.%
   \COMMENT{}
\end{eqnarray*}\vskip-18pt
\end{proof}

Every edge $e$ of $\cT$ separates \cT\ into two components. The vertex sets $V_t$ for the nodes $t$ in these components induce a corresponding separation of~$G$, as in \cite[Lemma 12.3.1]{DiestelBook10noEE}. This is the separation that defined~$e$:

\begin{lem}\label{lem_treeedge_sep}
Given any separation $\AB\in\cN$, consider the corresponding edge $e = \{\AB,\BA\}$ of $\cT=\T$. Let $\cT_A$ denote the component of ${\cT-e}$ that contains the node~$[\AB]$, and let $\cT_B$ be the other component. Then ${\bigcup_{t\in\cT_A} V_t = A}$ and $\bigcup_{t\in\cT_B} V_t = B$.
\end{lem}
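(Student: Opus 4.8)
The plan is to prove the two set-equalities by a symmetric argument, so it suffices to show $\bigcup_{t\in\cT_A} V_t = A$; the other follows by swapping the roles of $A$ and $B$ (and hence of $[\AB]$ and $[\BA]$). I will prove the two inclusions separately. For $\subseteq$, I take any node $t\in\cT_A$ and any vertex $v\in V_t$, and I want $v\in A$. If $\AB\in t$, this is immediate from the definition~\eqref{VtDef} of $V_t$. Otherwise I use the structure-tree machinery: since $t\in\cT_A$, the edge $e=\{\AB,\BA\}$ is not on any walk inside $\cT_A$, and in particular $[\AB]$ lies on the $t$--$[\AB]$ path while $[\BA]$ does not lie on the $t$--$[\BA']$ path for the relevant neighbour; more precisely, I consider the path in $\cT$ from $t$ to $[\BA]$, which must pass through the edge $e$, entering it at the node $[\AB]$. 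By Lemma~\ref{lem_walk} (applied along the step of this path that leaves $[\AB]$ via $e$), the separation $(e'\cap t')$ for the edge $e'$ on this path incident with $[\AB]$ on the $\cT_A$-side is a predecessor of $\AB$; combining this with the earlier steps via transitivity of $<$, one gets a separation $\CD\in t$ with $\CD\le\AB$, hence $V_t\sub C\sub A$ and so $v\in A$.

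For the reverse inclusion $A\subseteq\bigcup_{t\in\cT_A} V_t$, I take $v\in A$ and must produce a node $t\in\cT_A$ with $v\in V_t$. By (T1), applied to the \td\ $(\cT,\cV)$ — whose status as a genuine \td\ is presumably established just after this lemma, but I will instead argue directly — every vertex lies in some $V_t$. The cleaner route: let $t^*$ be the node $[\AB]$. If $v\in V_{t^*}$ we are done since $t^*\in\cT_A$. Otherwise, by definition of $V_{t^*}$ there is some $\CD\in t^*$ with $v\notin C$, i.e. $v\in D$; here $\CD\sim\AB$ so (as $\CD\ne\AB$ would force $\DC\le\CD'$ for a predecessor, and in any case $\BA\le\CD$) we have $\BA\le\CD$, giving $A\cap D\sub A\cap B$ — but $v\in A\cap D$ so $v\in A\cap B\sub V_{t^*}$ by Lemma~\ref{nosmallhubs}, a contradiction. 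Hence in fact $A\sub V_{[\AB]}\sub\bigcup_{t\in\cT_A}V_t$.

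Wait — that last paragraph actually shows $A\sub V_{[\AB]}$, which together with the first inclusion (which gave every $V_t$ for $t\in\cT_A$ inside $A$) would force $V_t\sub V_{[\AB]}$ for all such $t$; that is false in general (nodes further out in $\cT_A$ can contain vertices not in $A\cap B$ that still lie in $A$). So the argument of the second paragraph is flawed: from $\CD\in[\AB]$ with $v\notin C$ one only gets $v\in A\cap B$ provided $v\in A$, which is what we assumed — so actually it \emph{does} show $A\sub V_{[\AB]}$?? No: the definition of $V_{[\AB]}$ is the intersection over \emph{all} $\CD\in[\AB]$ of the first coordinates, and $v\in A$ need not lie in every such $C$. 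The correct statement is: if $v\in A\sm V_{[\AB]}$, pick $\CD\in[\AB]$ with $v\notin C$; then $v\in D$, and since $\BA\le\CD$ (as $\CD\sim\AB$, $\CD\ne\AB$, so $\BA$ is a predecessor of $\CD$, giving $\BA\le\CD$ hence $A\sub C$ — wait, $\BA\le\CD$ unpacks via~\eqref{eq_order_flip} to $\DC\le\AB$, i.e. $D\sub A$ and $C\supseteq B$). From $D\sub A$ and $v\in D$ we only get $v\in A$, no contradiction. So I should \emph{not} claim $A\sub V_{[\AB]}$.

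\textbf{Corrected plan.} For $A\sub\bigcup_{t\in\cT_A}V_t$: given $v\in A$, use (T1) for the pair $(\cT,\cV)$ — established immediately hereafter, or directly: the sets $\{A': \AB'\in\cN, v\in A'\sm B'\}$ together with the separators containing $v$ can be used to locate $v$ — to get some node $t$ with $v\in V_t$. It remains to show $t\in\cT_A$. Suppose not, so $t\in\cT_B$; then by the already-proved $\subseteq$-inclusion for $B$ (the symmetric version of the first paragraph), $v\in V_t\sub B$. So $v\in A\cap B$. But then by Lemma~\ref{nosmallhubs} applied to $[\AB]\in\cT_A$, $v\in A\cap B\sub V_{[\AB]}$, and $[\AB]\in\cT_A$, so we may replace $t$ by $[\AB]$ and are done. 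Thus every $v\in A$ lies in $V_t$ for some $t\in\cT_A$. The main obstacle is the first inclusion — pinning down, via Lemmas~\ref{lem_walk} and~\ref{path_in_T} and Theorem~\ref{thm_tree}, that for $t\in\cT_A$ there is a $\CD\in t$ with $\CD\le\AB$ (equivalently that the edge $e$ is "oriented away from $t$"); once that is in hand, both inclusions are short, and the $B$-side is verbatim symmetric.
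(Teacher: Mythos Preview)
Your corrected plan is sound, and the argument goes through. For the inclusion $\bigcup_{t\in\cT_A}V_t\sub A$: given $t\in\cT_A$ with $t\ne[\AB]$, the path in $\cT$ from $t$ to $[\BA]$ passes through $[\AB]$ and then uses~$e$; iterating Lemma~\ref{lem_walk} along this walk yields a separation $\CD\in t$ with $\CD<\AB$, so $V_t\sub C\sub A$. For the reverse inclusion: given $v\in A$, the singleton $\{v\}$ is $\cN$-inseparable and extends to an $\cN$-block, which by Lemma~\ref{blocksarenodes} equals some~$V_t$; if $t\in\cT_B$ the symmetric $\subseteq$-direction for~$B$ gives $v\in B$, whence $v\in A\cap B\sub V_{[\AB]}$ by Lemma~\ref{nosmallhubs}. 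One caution: do cite Lemma~\ref{blocksarenodes} directly rather than~(T1), since (T1) is only recorded in Theorem~\ref{treedec}, which comes after this lemma (though its proof of (T1) uses nothing beyond Lemma~\ref{blocksarenodes}, so there is no genuine circularity).

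This is a genuinely different route from the paper's. The paper argues by induction on~$|E(\cT)|$: it picks an edge $e^*\ne e$, passes to $\cN':=\cN\sm e^*$, and invokes Lemma~\ref{lem_extension} to identify $\cT(\cN')$ with~$\cT/e^*$ and to see that the part at the contracted node is $V_x\cup V_y$; the claim then follows from the induction hypothesis applied to~$\cN'$. Your approach is more direct and avoids Lemma~\ref{lem_extension} entirely, using instead the order-theoretic description of paths in~$\cT$ (Lemma~\ref{lem_walk}) together with Lemmas~\ref{nosmallhubs} and~\ref{blocksarenodes}. The paper's inductive approach has the advantage that Lemma~\ref{lem_extension} is needed anyway for Theorem~\ref{treedec}\,(iv), so it comes for free; your approach has the advantage of being self-contained and of making transparent \emph{why} the tree edge~$e$ induces exactly the separation~$\AB$.
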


\begin{proof}
We apply induction on~$|E(\cT)|$. If \cT\ consists of a single edge, the assertion is immediate from the definition of~\cT. Assume now that $|E(\cT)|>1$. In particular, there is an edge $e^*=xy\neq e$. 

Consider $\cN':= \cN \sm e^*$, and let $\cT':= \cT(\cN')$. Then ${\cT' = \cT/e^*}$, by Lemma~\ref{lem_extension}. Let $z$ be the node of~$\cT'$ contracted from~$e^*$. 
Define $\cT'_A$ as the component of $\cT'-e$ that contains the node~$[\AB]$, and let $\cT'_B$ be the other component. We may assume $e^* \in \cT_A$. 
Then
\begin{equation*}
V(\cT_A) \sm \{x , y\} = V(\cT'_A) \sm \{z\}\text{ and }V(\cT_B) = V(\cT'_B).
\end{equation*}
As $V_z=V_x\cup V_y$ by Lemma~\ref{lem_extension},%
   \COMMENT{}
   we can use the induction hypothesis to deduce that
\begin{equation*}
     \bigcup_{t\in\cT_A} V_t =\bigcup_{t\in\cT'_A} V_t = A\quad\text{and}\quad  \bigcup_{t\in\cT_B} V_t =\bigcup_{t\in\cT'_B} V_t = B,
\end{equation*}
as claimed.
\end{proof}

Let us summarize some of our findings from this section. Recall that \cN\ is an arbitrary nested separation system of an arbitrary finite graph~$G$. Let $\cT := \cT(\cN)$ be the structure tree associated with \cN\ as in Section~\ref{sec_tree}, and let $\cV := (V_t)_{t\in\cT}$ be defined by~\eqref{VtDef}. Let us call the separations of $G$ that correspond as in \cite[Lemma 12.3.1]{DiestelBook10noEE} to the edges of the decomposition tree of a \td\ of $G$ the separations {\em induced by\/} this \td.

\begin{thm}\label{treedec}
The pair $\left(\cT,\cV\right)$ is a tree-decomposition of $G$.
\begin{enumerate}[\rm (i)]
\item Every \cN-block is a part of the decomposition.
\item Every part of the decomposition is either an \cN-block or a hub.
\item The separations of $G$ induced by the decomposition are precisely those in~\cN.
\item  Every $\cN'\! \sub \cN$ satsfies $(\cT',\cV') \minor (\cT,\cV)$ for $\cT'\! = \cT(\cN')$ and ${\cV'\!= V(\cT')}$.%
   \footnote{See the Introduction for the definition of $(\cT',\cV') \minor (\cT,\cV)$.}
\end{enumerate}
\end{thm}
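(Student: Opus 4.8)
The plan is to verify the three tree-decomposition axioms (T1)--(T3) and then dispatch the four listed properties, most of which have already been established as lemmas.

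\medbreak

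\textbf{Axioms.} For (T1), I would argue that every vertex $v$ lies in some $V_\cX$. Among all separations $\AB\in\cN$ with $v\in A$ (there is at least one, e.g.\ the proper separation forces neither side empty; if no separation has $v\in A$, then $v\notin A$ for all $\AB\in\cN$, so $v\in B$ for all, i.e.\ $v$ lies in the part of every node on ``the other end'' -- in any case $\cN$ symmetric means $v$ is in some side), pick one that is $\le$-minimal with $v\in A$ (possible as $G$ is finite). Then $v\in V_{[\AB]}$: every separation equivalent to $\AB$ is either $\AB$ itself or a successor $\CD\ge\AB'$ where $\AB'$ is a predecessor, so has $v\in C$ by minimality and the predecessor structure, exactly as in the proof of Lemma~\ref{blocksarenodes}. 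So (T1) holds. For (T2), given an edge $uv\in G$ and a separation $\AB\in\cN$, since $G[A]\cup G[B]=G$ we have $\{u,v\}\sub A$ or $\{u,v\}\sub B$. Choose, for each $\AB\in\cN$, the side containing $\{u,v\}$; I claim the set of separations whose chosen side we can take ``oriented towards'' a common node is consistent, and picking a $\le$-minimal $\AB$ with $\{u,v\}\sub A$ puts $u,v\in V_{[\AB]}$ by the same argument as for (T1) applied to the pair $\{u,v\}$ in place of a single vertex. For (T3), let $t_2$ lie on the $t_1$--$t_3$ path in $\cT$, and let $e=\{\CD,\DC\}$ be an edge of that path, with $\cT_C\ni t_1$ and $\cT_D\ni t_3$ say (consistently oriented along the path). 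By Lemma~\ref{lem_treeedge_sep}, $V_{t_1}\sub\bigcup_{t\in\cT_C}V_t = C$ and $V_{t_3}\sub D$, so $V_{t_1}\cap V_{t_3}\sub C\cap D = \sep CD$; and by Lemma~\ref{nosmallhubs}, $\sep CD\sub V_{t_2}$ since $\CD$ or $\DC$ lies in $t_2$ (as $t_2$ is an endpoint of $e$, which lies on the path, hence is incident with $t_2$ whenever $t_2$ is an interior node of the path; if $t_2\in\{t_1,t_3\}$ the inclusion is trivial). Hence $V_{t_1}\cap V_{t_3}\sub V_{t_2}$.

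\medbreak

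\textbf{The four properties.} Statement (i) is exactly Lemma~\ref{blocksarenodes}. Statement (ii) is exactly Lemma~\ref{lem_eq_block} combined with the definition of hub nodes and the observation (already made) that every hub is \cN-inseparable. Statement (iii): by Lemma~\ref{lem_treeedge_sep}, the edge $e=\{\AB,\BA\}$ of $\cT$ induces the separation $\bigl(\bigcup_{t\in\cT_A}V_t,\ \bigcup_{t\in\cT_B}V_t\bigr)=\AB$, so every separation in $\cN$ is induced; conversely every edge of $\cT$ is of the form $\{\AB,\BA\}$ for some $\AB\in\cN$ by construction of $E(\cT)$, so the induced separations are precisely $\cN$ (up to the orientation ambiguity, which is immaterial since $\cN$ is symmetric). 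Statement (iv): iterate Lemma~\ref{lem_extension}. Writing $\cN\sm\cN' = \{\AB_1,\BA_1,\dots,\AB_m,\BA_m\}$ and deleting the pairs one at a time, each deletion contracts one edge of the current structure tree and, crucially, replaces the two endpoints $x,y$ of that edge by a single node $z$ with $V_z = V_x\cup V_y$ while leaving all other nodes (as sets of separations, and hence their parts $V_t$) unchanged. After $m$ steps we reach $\cT(\cN') = \cT'$, which is therefore a minor of $\cT$ obtained by contracting a set of edges, and each part $V_t'$ of $(\cT',\cV')$ is the union of the parts $V_{t'}$ of $(\cT,\cV)$ over the nodes $t'$ contracted into $t$ -- this is precisely the relation $(\cT',\cV')\minor(\cT,\cV)$. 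One should check that the order of deletions does not matter and that Lemma~\ref{lem_extension} applies at each stage (it does, since each intermediate $\cN$ is still a nested separation system, being a subset of the original).

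\medbreak

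\textbf{Main obstacle.} The routine-looking but genuinely load-bearing point is (T1)/(T2): one must show that orienting each separation in $\cN$ ``towards'' the vertex (or edge) in question yields a \emph{consistent} orientation, i.e.\ one whose ``innermost'' separations are equivalent and thus form a node. This is really the content of the minimality argument already used inside the proof of Lemma~\ref{blocksarenodes}, and I would present it once as a small standalone claim -- ``for any $v\in V(G)$, the set of $\AB\in\cN$ that are $\le$-minimal with $v\in A$ is a single $\sim$-class, hence a node $\cX$ with $v\in V_\cX$'' -- and then invoke it for both (T1) (single vertex) and (T2) (the two ends of an edge simultaneously, using that for each $\AB\in\cN$ both ends lie on a common side). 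Everything else is bookkeeping built on Lemmas~\ref{nosmallhubs}, \ref{blocksarenodes}, \ref{lem_eq_block}, \ref{lem_extension} and~\ref{lem_treeedge_sep}.
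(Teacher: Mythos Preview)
Your proposal is correct and, for (T3) and items (i)--(iv), matches the paper's proof essentially line for line (same lemmas, same reasoning).

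The one place where you diverge is (T1)/(T2), which you flag as the ``main obstacle'' and propose to handle by extracting and re-running the $\le$-minimality argument from inside the proof of Lemma~\ref{blocksarenodes}. This works, but the paper's route is shorter: it simply observes that a singleton $\{v\}$, and the two-element set $\{u,v\}$ for any edge $uv$ of~$G$, are \cN-inseparable (the latter because $G[A]\cup G[B]=G$ forces both ends of an edge into a common side of every separation). Any \cN-inseparable set extends to an \cN-block, and by Lemma~\ref{blocksarenodes} every \cN-block equals some~$V_t$. That disposes of (T1) and (T2) in one sentence, using Lemma~\ref{blocksarenodes} as a black box rather than reopening its proof. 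Your standalone claim about consistent orientations is not wrong, but it duplicates work already packaged in that lemma---and note that your minimality sketch, as written, tacitly assumes the non-small case of Lemma~\ref{blocksarenodes}, so you would still have to treat separately the case $v\in A\cap B$ for some $\AB\in\cN$.
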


\begin{proof} Of the three axioms for a \td, (T1)~and (T2) follow from Lemma~\ref{blocksarenodes}, because single vertices and edges form \cN-inseparable vertex sets, which extend to \cN-blocks. For the proof of~(T3), let $e=\{\AB,\BA\}$ be an edge at~$t_2$ on the ${t_1}$--${t_3}$ path in~\cT. Since $e$ separates ${t_1}$ from ${t_3}$ in~\cT, Lemmas \ref{lem_treeedge_sep} and~\ref{nosmallhubs} imply that $V_{t_1}\cap V_{t_3} \sub A\cap B \sub V_{t_2}$.

Statement~(i) is Lemma~\ref{blocksarenodes}. Assertion~(ii) is Lemma~\ref{lem_eq_block}. Assertion~(iii) follows from Lemma~\ref{lem_treeedge_sep} and the definition of the edges of~\cT. Statement~(iv) follows by repeated application of Lemma~\ref{lem_extension}.
\end{proof}

\section{Extracting nested separation systems}\label{sec_sys}

Our aim in this section will be to find inside a given \sys~\cS\ a nested subsystem~\cN\ that can still distinguish the elements of some given set \cI\ of \cS-inseparable sets of vertices. As we saw in Sections \ref{sec_tree} and~\ref{sec_td}, such a nested subsystem will then define a \td\ of~$G$, and the sets from~\cI\ will come to lie in different parts of that decomposition.

This cannot be done for all choices of \cS\ and~\cI. Indeed, consider the following example of where such a nested subsystem does not exist. Let $G$ be the $3\times 3$-grid, let \cS\ consist of the two 3-separations cutting along the horizontal and the vertical symmetry axis, and let \cI\ consist of the four corners of the resulting cross-diagram. Each of these is \cS-inseparable, and any two of them can be separated by a separation in~\cS. But since the two separations in \cS\ cross, any nested subsystem contains at most one of them, and thus fails to separate some sets from~\cI.

However, we shall prove that the desired nested subsystem does exist if \cS\ and \cI\ satisfy the following condition. Given a \sys~\cS\ and a set~\cI\ of \cS-inseparable sets, let us say that \emph{\cS\ separates \cI\ well} if the following holds for every pair of crossing~-- that is, not nested~-- separations $\AB,\CD\in\cS$:
   \begin{txteq*}
For all $I_1,I_2 \in \cI$ with $I_1\sub \sep AC$ and $I_2\sub\sep BD$ there is an $\EF\in\cS$ such that $I_1\sub E\sub\sep AC$ and $F\supseteq B\cup D$.%
   \COMMENT{}
   \end{txteq*}
Note that such a separation satisfies both $\EF\le\AB$ and $\EF\le\CD$.

In our grid example, \cS\ did not separate \cI\ well, but we can mend this by adding to \cS\ the four corner separations. And as soon as we do that, there is a nested subsystem that separates all four corners~-- for example, the set of the four corner separations.

More abstractly, the idea behind the notion of \cS\ separating \cI\ well is as follows. In the process of extracting \cN\ from~\cS\ we may be faced with a pair of crossing separations \AB\ and \CD\ in~\cS\ that both separate two given sets $I_1,I_2\in\cI$, and wonder which of them to pick for~\cN. (Obviously we cannot choose both.) If \cS\ separates \cI\ well, however, we can avoid this dilemma by choosing~\EF\ instead: this also separates $I_1$ from~$I_2$, and since it is nested with both \AB\ and \CD\ it will not prevent us from choosing either of these later too, if desired.

Let us call a separation ${\EF\in\cS}$ \emph{extremal} in \cS\ if for all $\CD\in\cS$ we have either $\EF\le\CD$ or $\EF\le\DC$. In particular, extremal separations are nested with all other separations in~\cS. Being extremal implies being $\le$-minimal in~\cS;%
   \COMMENT{}
   if \cS\ is nested, extremality and $\leq$-minimality are equivalent. If $\EF\in\cS$ is extremal, then $E$ is an \cS-block;%
   \COMMENT{}
   we call it an \emph{extremal block} in~\cS.%
   \COMMENT{}

A separation system, even a nested one, typically contains many extremal separations. For example, given a \td\ of $G$ with decomposition tree~\cT, the separations corresponding to the edges of \cT\ that are incident with a leaf of~\cT\ are extremal in the (nested) set of all the separations of $G$ corresponding to edges of~\cT.%
   \footnote{More precisely, every such edge of \cT\ corresponds to an inverse pair of separations of which, usually, only one is extremal: the separation \AB\ for which $A$ is the part $V_t$ with $t$ a leaf of~\cT. The separation \BA\ will not be extremal, unless $\cT=K^2$.}

Our next lemma shows that separating a set \cI\ of \cS-inseparable sets well is enough to guarantee the existence of an extremal separation among those that separate sets from \cI. Call a separation \emph{\cI-relevant} if it weakly separates some two sets in~\cI. If all the separations in \cS\ are \cI-relevant, we call \cS\ itself \emph{\cI-relevant}.

\begin{lem}\label{lem_extremal}
Let $\cR$ be a \sys\ that is \cI-relevant for some set \cI\ of \cR-inseparable sets. If \cR\ separates \cI\ well, then every $\le$-minimal $\AB\in\cR$ is extremal in~\cR. In particular, if $\cR\ne\es$ then \cR\ contains an extremal separation.
\end{lem}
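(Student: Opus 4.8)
The plan is to show that a $\le$-minimal separation $\AB\in\cR$ is in fact extremal, i.e.\ that it is nested (in the comparable-with-one-orientation sense) with \emph{every} $\CD\in\cR$. So suppose for contradiction that some $\AB\in\cR$ is $\le$-minimal but not extremal: there is a $\CD\in\cR$ with $\AB\nparallel\CD$ (if \AB\ were nested with \CD\ then, being $\le$-minimal, it would satisfy $\AB\le\CD$ or $\AB\le\DC$). The first step is to exploit \cR-relevance: \AB\ weakly separates two sets $I_1,I_2\in\cI$, and since the sets in \cI\ are \cR-inseparable we get $I_1\sub A$, $I_2\sub B$ (or vice versa) with neither contained in \sep AB. Likewise \CD\ weakly separates two sets $J_1\sub C$, $J_2\sub D$ from \cI.

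The crux will be to arrange that one of the two pairs lies in opposite corners of the cross-diagram $\{\AB,\CD\}$, so that the ``\cR\ separates \cI\ well'' hypothesis applies. Since $I_1\sub A$ and $I_1$ is \cR-inseparable, $I_1$ must lie entirely in one of the two corners \sep AC, \sep AD of the cross-diagram on the $A$-side; say $I_1\sub\sep AC$ after swapping the names of $C,D$ if necessary (note that swapping $C\leftrightarrow D$ replaces \CD\ by \DC, which is still in \cR\ since \cR\ is symmetric, and does not affect $\AB\nparallel\CD$). Similarly $I_2\sub B$ forces $I_2$ into \sep BC\ or \sep BD. If $I_2\sub\sep BD$ we are done: $I_1,I_2$ sit in opposite corners. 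The remaining case is $I_2\sub\sep BC$, so that $I_1,I_2$ lie in the two corners on the ``$C$-side''. Here I would bring in the second pair $J_1,J_2$: the idea is that whichever corner $J_1$ (resp.\ $J_2$) lands in, we can pair up one of $I_1,I_2$ with one of $J_1,J_2$ in opposite corners, unless all four of $I_1,I_2,J_1,J_2$ happen to avoid a common corner—and a short case analysis over the (at most) four corners shows some opposite pair always materializes. So in every case we obtain $K_1,K_2\in\cI$ lying in opposite corners, say $K_1\sub\sep AC$ and $K_2\sub\sep BD$.

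Now apply the hypothesis that \cR\ separates \cI\ well to the crossing pair $\AB,\CD$ and the sets $K_1\sub\sep AC$, $K_2\sub\sep BD$: there is an $\EF\in\cR$ with $K_1\sub E\sub\sep AC$ and $F\supseteq B\cup D$. As noted right after the definition, such an \EF\ satisfies $\EF\le\AB$. Moreover $\EF\ne\AB$: since $E\sub\sep AC\sub\sep AB$, $E$ is small with respect to \cR, whereas $A\sm B\ne\es$ because \AB\ (weakly) separates $I_1$ from $I_2$ with neither contained in the separator—so $A\ne\sep AB\supseteq E$ and thus $E\psub A$. Hence $\EF<\AB$, contradicting the $\le$-minimality of \AB\ in~\cR. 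This proves that every $\le$-minimal $\AB\in\cR$ is extremal. Finally, since \cR\ is a separation system of the finite graph $G$, if $\cR\ne\es$ then \cR\ has a $\le$-minimal element, which by what we just proved is extremal; this gives the ``in particular'' clause.

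\textbf{Main obstacle.} The routine axioms (the cross-diagram geometry, $\EF\le\AB\le\CD$) are handled by the facts already recorded in Section~\ref{sec_nested}. The genuinely fiddly part is the corner bookkeeping: ensuring that the two $\cI$-relevant pairs for \AB\ and \CD\ can always be recombined into a single pair occupying \emph{opposite} corners of $\{\AB,\CD\}$, after possibly renaming $C\leftrightarrow D$ (and, if one reorients \AB, also $A\leftrightarrow B$). I expect the clean way to do this is to observe that each of $I_1,I_2,J_1,J_2$ lies in exactly one of the four corners, that $I_1,I_2$ lie in corners that are not on the same side of \CD\ while $J_1,J_2$ lie in corners not on the same side of \AB, and then to check the small finite list of placements; in each one, some $\{K_1,K_2\}$ with $K_i\in\{I_1,I_2,J_1,J_2\}$ occupies an opposite pair of corners. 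That combinatorial check, while short, is the step most likely to need care to state without gaps.
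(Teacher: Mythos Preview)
Your overall strategy matches the paper's: use $\cI$-relevance of both \AB\ and \CD\ together with $\cR$-inseparability to place sets from $\cI$ into opposite corners of the cross-diagram, then invoke ``separates well'' to produce $\EF\le\AB$. The corner bookkeeping you flag as the main obstacle is in fact routine (the paper dispatches it in one sentence): once $I_1\sub A\cap C$ and $I_2\sub B\cap C$, the set $J_2\sub D$ that witnesses relevance of \CD\ must land in $A\cap D$ or $B\cap D$, and in either case it is opposite to one of $I_1,I_2$.

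The genuine gap is in your final step. You write ``$E\sub\sep AC\sub\sep AB$'', i.e.\ $E\sub A\cap C\sub A\cap B$; but $A\cap C\sub A\cap B$ is simply false in general. You also assert that neither $I_1$ nor $I_2$ lies in $A\cap B$, but \emph{weak} separation does not give this (only proper separation would), so that claim is unjustified. Hence your argument that $\EF\ne\AB$, and thus $\EF<\AB$, does not stand as written.

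The fix is shorter than what you attempted: you already have not only $\EF\le\AB$ but also $\EF\le\CD$ (since $E\sub A\cap C\sub C$ and $F\supseteq B\cup D\supseteq D$). By $\le$-minimality of \AB\ you get $\AB=\EF\le\CD$, which directly contradicts $\AB\nparallel\CD$. This is exactly how the paper concludes (phrased there without the contradiction wrapper: it simply derives $\AB\le\CD$, which is the extremality one wanted).
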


\begin{proof}
Consider a $\le$-minimal separation $\AB\in\cR$, and let $\CD\in\cR$ be given. If \AB\ and \CD\ are nested, then the minimality of~$\AB$ implies that $\AB\leq \CD$ or $\AB\leq \DC$,
   \COMMENT{}
  as desired. So let us assume that \AB\ and \CD\ cross.

As \AB\ and \CD\ are \cI-relevant and the sets in \cI\ are \cR-inseparable, we can find opposite corners of the cross-diagram $\{\AB,\CD\}$ that each contains a set from~\cI. Renaming \CD\ as \DC\ if necessary, we may assume that these sets lie in \sep AC\ and \sep BD, say $I_1\sub\sep AC$ and $I_2\sub\sep BD$. As \cR\ separates \cI\ well, there exists $\EF\in\cR$ such that $I_1 \sub E \sub\sep AC$ and $F\supseteq B\cup D$, and hence $\EF\le\AB$ as well as $\EF\le\CD$. By the minimality of \AB, this yields $\AB=\EF\le\CD$ as desired.
\end{proof}

Let us say that a set \cS\ of separations \emph{distinguishes} two given \cS-inseparable sets $I_1,I_2$ (or \emph{distinguishes} them {\em properly\/}) if it contains a separation that separates them. If it contains a separation that separates them weakly, it \emph{weakly distinguishes\/} $I_1$ from~$I_2$. We then also call $I_1$ and $I_2$ \emph{(weakly) distinguishable} by~\cS, or \emph{(weakly) \cS-distinguishable}.

Here is our main result for this section:

\begin{thm}\label{thm_main}
Let \cS\ be any \sys\ that separates some set \cI\ of \cS-inseparable sets of vertices well. Then \cS\ has a nested \cI-relevant subsystem $\cN(\cS,\cI) \sub \cS$ that weakly distinguishes all weakly \cS-distinguishable sets in~\cI.
\end{thm}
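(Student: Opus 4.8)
The plan is to build $\cN(\cS,\cI)$ by a greedy/recursive procedure driven by Lemma~\ref{lem_extremal}: repeatedly pull out an extremal separation, record it, discard everything that crosses it or has become irrelevant, and recurse on what survives. First I would restrict attention to the \cI-relevant separations in~\cS; call this subsystem~$\cR_0$, which is again symmetric, still separates \cI\ well, and is \cI-relevant. If $\cR_0=\es$ we take $\cN(\cS,\cI)=\es$ and there is nothing to distinguish (vacuously). Otherwise, by Lemma~\ref{lem_extremal}, $\cR_0$ contains an extremal separation; pick one, say \AB, together with its inverse \BA, put both into~\cN, and form $\cR_1$ by deleting from $\cR_0$ every separation that is {\em not} nested with \AB\ (equivalently, not nested with \BA), and then discarding any separation that is no longer \cI-relevant. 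The key point is that $\cR_1$ is a separation system that still separates \cI\ well (the `well'-condition only quantifies over crossing pairs, and we have only removed separations) and is again \cI-relevant by construction, so the procedure can be iterated. Since $G$ is finite, $\cS$ has only finitely many separations, and each step removes at least the (finitely many) separations we just added together with at least one more or terminates; so after finitely many steps the surviving system is empty and the process halts, yielding a finite set~$\cN=\cN(\cS,\cI)$.

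By construction $\cN\sub\cS$, and $\cN$ is symmetric since we always add inverse pairs. It is nested: any two separations added at different stages are nested, because when we added the later one it survived into a subsystem all of whose members were required to be nested with the earlier one; and an inverse pair is trivially nested. It is \cI-relevant because every separation we add was chosen from an \cI-relevant subsystem. So the only substantive thing left is the distinguishing property: if $I_1,I_2\in\cI$ are weakly \cS-distinguishable, then some separation of~$\cN$ weakly distinguishes them.

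For this I would argue as follows. Suppose $I_1,I_2\in\cI$ are weakly distinguished by some $\CD\in\cS$ but by no separation of~\cN; among all such `bad' pairs choose one, and track it through the construction. Initially $\CD\in\cR_0$ (it is \cI-relevant, as it weakly separates $I_1$ and $I_2$). At the first stage where a separation weakly distinguishing $I_1$ from $I_2$ fails to survive into the next subsystem, there is such a separation $\CD'$ in the current $\cR_i$ that is {\em not} nested with the extremal separation \AB\ we picked at stage~$i$. Now \AB\ is extremal, so it {\em is} nested with every separation of~$\cR_i$, in particular with~$\CD'$ --- contradiction. Hence no separation weakly distinguishing $I_1$ from $I_2$ is ever lost for failing to be nested with a chosen extremal separation. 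The other way a separation leaves is by ceasing to be \cI-relevant after a deletion; but that cannot happen to a separation weakly separating $I_1$ from $I_2$ as long as $I_1,I_2$ are still present in~\cI. Therefore some separation weakly distinguishing $I_1$ from~$I_2$ survives until it is itself chosen as the extremal separation at some stage, at which point it enters~\cN, contradicting the assumption that \cN\ does not weakly distinguish $I_1$ from~$I_2$.

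\textbf{Expected main obstacle.} The delicate point is the invariant that each subsystem $\cR_i$ still separates \cI\ well and remains \cI-relevant, together with the bookkeeping that shows a separation weakly distinguishing a given pair is never discarded prematurely. The `well'-property is preserved because its hypothesis ranges only over crossing pairs and we only ever delete separations, never add; but one must check that the witnessing separation \EF\ promised by the `well'-condition, which must lie in the {\em current} subsystem, indeed survives --- this is exactly where one uses that \EF\ is nested with both \AB\ and \CD\ and hence with the extremal separation at every stage, and that one restricts at the end to \cI-relevant members. I expect the bulk of the write-up to consist of making this invariant precise and verifying it at each deletion step; the finiteness and the nestedness of the output are comparatively routine.
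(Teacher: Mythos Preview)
Your approach recurses on the separation system, whereas the paper recurses on~$\cI$, and this difference matters: your key invariant --- that each $\cR_i$ still separates $\cI$ well --- is not preserved, so Lemma~\ref{lem_extremal} ceases to apply. Your justification (``the `well'-condition only quantifies over crossing pairs, and we have only removed separations'') overlooks that the condition also asserts the \emph{existence} of a witness $\EF$ \emph{in the current system}; removing separations can remove precisely that witness. In fact, since the extremal $\AB$ you pick is nested with everything in~$\cR_i$, your `delete non-nested' step deletes nothing, and $\cI$ never changes, so the only thing leaving $\cR_i$ is the chosen pair itself. After finitely many steps you have removed some extremal separations of~$\cR_0$, and what survives can be of the form $\{\CD,\DC,\EF,\FE\}$ with $\CD\nparallel\EF$ (as in the $3\times 3$ grid example once the four corner separations have been chosen): this system is still $\cI$-relevant, it has no extremal element, and it does not separate $\cI$ well, because the only witness for this crossing pair was one of the corner separations you already removed. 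Your process halts with $\cR_i\ne\emptyset$, and your distinguishing argument --- that any separation weakly distinguishing $I_1$ from $I_2$ ``survives until it is itself chosen'' --- breaks down: $\CD$ survives forever but is never chosen.

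The paper avoids this by never shrinking~$\cS$. It takes the set $\cE$ of \emph{all} extremal separations of the $\cI$-relevant subsystem~$\cR$, shows that $\overline\cE$ already weakly distinguishes every pair $I_1,I_2$ with $I_1$ contained in the small side of some member of~$\cE$, and then recurses with the strictly smaller $\cI':=\cI\setminus\cI_\cE$ and the \emph{same}~$\cS$. Since the `well' condition is universally quantified over $I_1,I_2\in\cI$, shrinking $\cI$ can only help; and since $\cI'\subseteq\cI$ forces $\cN(\cS,\cI')\subseteq\cR$, the set $\overline\cE$ is automatically nested with everything produced by the recursive call. As a bonus, this construction makes no arbitrary choices, which is what yields Corollary~\ref{cor_properties}; your `pick one extremal separation' step would not.
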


\begin{proof}
If \cI\ has no two weakly distinguishable elements, let $\cN(\cS,\cI)$ be empty. Otherwise let $\cR\sub\cS$ be the subsystem of all \cI-relevant separations in~\cS. Then $\cR\ne\es$, and \cR~separates \cI\ well.%
   \COMMENT{}
   Let $\cE\sub\cR$ be the subset%
   \COMMENT{}
   of those separations that are extremal in \cR, and put
 $$\closure\cE := \{ \AB\;|\;\AB \mbox{ or }\BA \mbox{ is in }\cE \}.$$
By Lemma \ref{lem_extremal} we have $\closure\cE\neq\emptyset$, and by definition of extremality all separations in $\closure\cE$ are nested with all separations in~\cR. In particular, $\closure\cE$ is nested.

Let
 $$\cI_\cE := \{I\in\cI\;|\;\exists\EF\in\cE:I\sub E\}.$$
This is non-empty, since $\cE \sub \cR$ is non-empty and \cI-relevant. Let us prove that \cE\ weakly distinguishes all pairs of weakly distinguishable elements $I_1, I_2\in\cI$ with $I_1\in\cI_\cE$. Pick $\AB\in\cR$%
   \COMMENT{}
   with $I_1\sub A$ and $I_2\sub B$. Since $I_1\in\cI_\cE$, there is an $\EF\in\cE$ such that $I_1 \sub E$. By the extremality of \EF\ we have either $\EF\le\AB$, in which case $I_1 \sub E$ and $I_2 \sub B\sub F$, or we have $\EF\le\BA$, in which case $I_1 \sub E\cap A\sub \sep EF$. In both cases $I_1$ and $I_2$ are weakly separated by \EF.%
   \COMMENT{}

As $\cI' := \cI\sm\cI_\cE$ is a set of \cS-inseparable sets with fewer elements than~\cI,%
   \COMMENT{}
   induction gives us a nested $\cI'$-relevant subsystem $\cN(\cS,\cI')$ of~\cS\ that weakly distinguishes all weakly distinguishable elements of~$\cI'$. Then
 $$\cN(\cS,\cI) := \closure\cE \cup \cN(\cS,\cI')$$
 is \cI-relevant and weakly distinguishes all weakly distinguishable elements of~\cI.
As $\cI'\sub\cI$, and thus $\cN(\cS,\cI')\sub\cR$, the separations in $\closure\cE$ are nested with those in~$\cN(\cS,\cI')$.%
   \COMMENT{}
   Hence, $\cN(\cS,\cI)$ too is nested.
\end{proof}

An important feature of the proof of Theorem~\ref{thm_main} is that the subset $\cN(\cS,\cI)$ it constructs is \emph{canonical}, given \cS\ and~\cI: there are no choices made anywhere in the proof. We may thus think of $\cN$ as a recursively defined operator that assigns to every pair $(\cS,\cI)$ as given in the theorem a certain nested subsystem $\cN(\cS,\cI)$ of~$\cS$. This subsystem $\cN(\cS,\cI)$ is canonical also in the structural sense that it is invariant under any automorphisms of~$G$ that leave \cS\ and \cI\ invariant.

To make this more precise, we need some notation. Every automorphism $\alpha$ of $G$ acts also on (the set of) its vertex sets $U\sub V(G)$, on the collections $\cal X$ of such vertex sets, on the separations \AB\ of~$G$, and on the sets \cS\ of such separations. We write $U^\alpha$, ${\cal X}^\alpha$, $\AB^\alpha$ and $\cS^\alpha$ and so on for their images under~$\alpha$.

\begin{cor}\label{cor_properties}
Let \cS\ and \cI\ be as in Theorem \ref{thm_main}, and let $\cN(\cS,\cI)$ be the nested subsystem of~\cS\ constructed in the proof. Then for every automorphism $\alpha$ of~$G$ we have $\cN(\cS^\alpha,\cI^\alpha)= \cN(\cS,\cI)^\alpha$. In particular, if \cS\ and \cI\ are invariant under the action of a group $\Gamma$ of automorphisms of~$G$,%
   \COMMENT{}
    then so is $\cN(\cS,\cI)$.
\end{cor}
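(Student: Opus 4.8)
The plan is to verify that the construction in the proof of Theorem~\ref{thm_main} commutes with the action of any automorphism $\alpha$ of~$G$, by induction on $|\cI|$, mirroring the recursive structure of that proof. The key point is that every object introduced along the way~-- the subsystem $\cR$ of \cI-relevant separations, the set $\cE$ of extremal separations in~$\cR$, its symmetric closure $\closure\cE$, and the set $\cI_\cE$~-- is defined purely in terms of \cS, \cI, the vertex sets of~$G$, and the partial order~$\le$ on separations, all of which are preserved by~$\alpha$ (and by~$\alpha^{-1}$). So the first step is to record the basic compatibilities: $\alpha$ maps separations to separations, $\AB\le\CD$ iff $\AB^\alpha\le\CD^\alpha$, $I\sub A$ iff $I^\alpha\sub A^\alpha$, and hence $\alpha$ takes \cS-inseparable sets to $\cS^\alpha$-inseparable sets and takes a pair of sets weakly separated by \EF\ to a pair weakly separated by $\EF^\alpha$. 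In particular, \cS\ separates \cI\ well if and only if $\cS^\alpha$ separates $\cI^\alpha$ well, so the hypothesis of the theorem is $\alpha$-invariant.

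Next I would run the induction. In the base case, \cI\ has no two weakly distinguishable elements; then neither does $\cI^\alpha$ (by the compatibilities above), so $\cN(\cS^\alpha,\cI^\alpha)=\es=\es^\alpha=\cN(\cS,\cI)^\alpha$. For the inductive step, observe that the \cI-relevant subsystem satisfies $\cR^\alpha=\cR(\cS^\alpha,\cI^\alpha)$, because ``$\EF$ weakly separates two members of~\cI'' is an $\alpha$-invariant property; similarly extremality in~$\cR$ is defined by a $\le$-condition ranging over $\cR$, so $\cE^\alpha$ is exactly the set of separations extremal in~$\cR^\alpha$, whence $\closure{\cE^\alpha}=(\closure\cE)^\alpha$; and $\cI_{\cE^\alpha}=(\cI_\cE)^\alpha$ since membership in $\cI_\cE$ is the condition $\exists\EF\in\cE:I\sub E$, again transported by~$\alpha$. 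Consequently $\cI'=\cI\sm\cI_\cE$ satisfies $(\cI')^\alpha=\cI^\alpha\sm\cI_{\cE^\alpha}$, so applying the induction hypothesis to the pair $(\cS,\cI')$ and the automorphism~$\alpha$ gives $\cN(\cS^\alpha,(\cI')^\alpha)=\cN(\cS,\cI')^\alpha$. Taking unions,
\[
\cN(\cS^\alpha,\cI^\alpha)=\closure{\cE^\alpha}\cup\cN(\cS^\alpha,(\cI')^\alpha)=(\closure\cE)^\alpha\cup\cN(\cS,\cI')^\alpha=\big(\closure\cE\cup\cN(\cS,\cI')\big)^\alpha=\cN(\cS,\cI)^\alpha,
\]
as claimed. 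Finally, if \cS\ and \cI\ are $\Gamma$-invariant, then $\cS^\alpha=\cS$ and $\cI^\alpha=\cI$ for all $\alpha\in\Gamma$, so the displayed identity gives $\cN(\cS,\cI)^\alpha=\cN(\cS^\alpha,\cI^\alpha)=\cN(\cS,\cI)$, i.e.\ $\cN(\cS,\cI)$ is $\Gamma$-invariant.

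The only real thing to be careful about is not the argument itself~-- which is a routine ``the construction used no choices, hence it is equivariant'' induction~-- but making sure the induction is set up so that the automorphism is allowed to vary: one must prove the statement ``$\cN(\cS^\alpha,\cI^\alpha)=\cN(\cS,\cI)^\alpha$ for all $\alpha$'' for all admissible pairs $(\cS,\cI)$ simultaneously, since the recursive call replaces $(\cS,\cI)$ by $(\cS,\cI')$ (with $\cI'\psub\cI$) while keeping the same~$\alpha$. With the induction phrased this way, each bullet above is an immediate unwinding of a definition, and there is no genuine obstacle; the proof is essentially a matter of checking that each defined set in the construction of Theorem~\ref{thm_main} is specified by an $\alpha$-invariant formula.
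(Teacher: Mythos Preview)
Your proof is correct and takes essentially the same approach as the paper: the paper's own proof is a one-liner saying the first assertion ``is immediate from the construction of $\cN(\cS,\cI)$ from $\cS$ and~$\cI$'' and then deduces the second assertion exactly as you do. You have simply spelled out in full the inductive verification that the paper leaves implicit.
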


\begin{proof}
The proof of the first assertion is immediate from the construction of $\cN(\cS,\cI)$ from $\cS$ and~$\cI$. The second assertion follows, as
 $$\cN(\cS,\cI)^\alpha = \cN(\cS^\alpha,\cI^\alpha) = \cN(\cS,\cI)$$
for every $\alpha\in\Gamma$.%
   \COMMENT{}
\end{proof}

\section{\boldmath Separating the $k$-blocks of a graph}\label{sec_kinsep}
We now apply the theory developed in the previous sections to our original problem, of how to `decompose a graph $G$ into its $(k+1)$-connected components'. In the language of Section~\ref{sec_sys}, we consider as \cS\ the set of all proper $k$-separations of~$G$, and as \cI\ the set of its $k$-blocks. Our results from Section~\ref{sec_sys} rest on the assumption that the set~\cR\ of \cI-relevant separations in \cS\ separates~\cI\ well (Lemma~\ref{lem_extremal}). So the first thing we have to ask is: given crossing $k$-separations \AB\ and~\CD\ such that $A\cap C$ and $B\cap D$ contain $k$-blocks $b_1$ and~$b_2$, respectively, is there a $k$-separation $\EF$ such that $b_1\sub E\sub\sep AC$?

If $G$ is $k$-connected, there clearly is. Indeed, as the corners $A\cap C$ and $B\cap D$ each contain a $k$-block, they have order at least~$k+1$, so their boundaries cannot have size less than~$k$.%
   \COMMENT{}
    But the sizes of these two corner boundaries sum to ${|A\cap B| + |C\cap D| = 2k}$, so they are both exactly~$k$. We can thus take as \EF\ the corner separation ${(A\cap C}, {B\cup D})$.

If $G$ is not $k$-connected, we shall need another reason for these corner separations to have order at least~$k$. This is a non-trivial problem. Our solution will be to assume inductively that those $k$-blocks that can be separated by a separation of \text{order} $\ell < k$ are already separated by such a separation selected earlier in the induction. Then the two corner separations considered above will have order at least~$k$, since the $k$-blocks in the two corners are assumed not to have been separated earlier.

This approach differs only slightly from the more ambitious approach to build, inductively on~$\ell$, one nested set of separations which, for all $\ell$ at once, distinguishes every two $\ell$-blocks by a separation of order at most~$\ell$. We shall construct an example showing that such a unified nested separation system need not exist. The subtle difference between our approach and this seemingly more natural generalization is that we use $\ell$-separations for $\ell < k$ only with the aim to separate $k$-blocks; we do not aspire to separate all $\ell$-blocks, including those that contain no $k$-block.

However we shall be able to prove that the above example is essentially the only one precluding the existence of a unified nested set of separations. Under a mild additional assumption saying that all blocks considered must be `robust', we shall obtain one unified nested set of separations that distinguishes, for all $\ell$ simultaneously, all $\ell$-blocks by a separation of order at most~$\ell$. All $\ell$-blocks that have size at least ${3\over2}\ell$ will be robust.

Once we have found our nested separation systems, we shall convert them into \td s as in Section~\ref{sec_td}. Both our separation systems and our \td s will be canonical in that they depend only on the structure of~$G$. In particular,%
   \COMMENT{}
   they will be invariant under the automorphism group $\Aut(G)$ of~$G$.

   \begin{figure}[htpb]
\begin{center}
   	  \includegraphics[width=5cm]{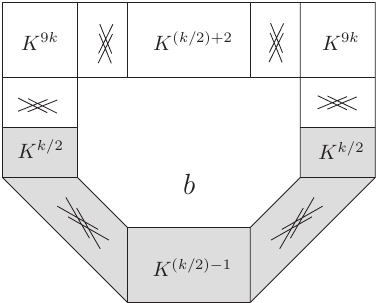}
      \captionsetup{margin=10pt,font=small,labelfont=it}
   	  \caption{A horizontal $k$-separation needed to distinguish two $k$-blocks,\penalty-200\ crossed by a vertical
 $(k+1)$-separation needed to distinguish two $(k+1)$-blocks.}
   	  \label{fig:robust_needed}\vskip-12pt\vskip0pt
\end{center}
   \end{figure}

Let us now turn to our example showing that a graph need not have a `unified' nested separation system \cN\ of separations of mixed order that distinguishes, for every~$\ell$, distinct $\ell$-blocks by a separation in \cN\ of order at most~$\ell$. The graph depicted in Figure~\ref{fig:robust_needed} arises from the disjoint union of a $K^{(k/2)-1}$, two $K^{k/2}$, a $K^{(k/2)+2}$ and two $K^{9k}$, by joining the $K^{(k/2)-1}$ completely to the two $K^{k/2}$, the $K^{(k/2)+2}$ completely to the two $K^{9k}$, the left $K^{k/2}$ completely to the left $K^{9k}$, and the right $K^{k/2}$ completely to the right $K^{9k}$. 
The horizontal $k$-separator consisting of the two $K^{k/2}$ defines the only 
separation of order at most~$k$ that distinguishes the two $k$-blocks consisting of the top five complete 
graphs versus the bottom three. On the other hand, the vertical $(k+1)$-separator consisting of the $K^{(k/2)-1}$ and the $K^{(k/2)+2}$ defines the only separation of order at most $(k+1)$ that distinguishes the two $(k+1)$-blocks consisting, respectively, of
the left $K^{k/2}$ and $K^{9k}$ and the $K^{(k/2)+2}$, and of the right $K^{k/2}$ and $K^{9k}$ and the $K^{(k/2)+2}$.
Hence any separation system that distinguishes all $k$-blocks as well 
as all $(k+1)$-blocks must contain both separations. Since the two separations cross, such a system cannot be nested.

In view of this example it may be surprising that we can find a separation system that distinguishes,%
   \COMMENT{}
   for all $\ell\ge 0$ simultaneously, all \emph{large} $\ell$-blocks of~$G$, those with at least $\lfloor{3\over 2}\ell\rfloor$ vertices. The example of Figure~\ref{fig:robust_needed} shows that this value is best possible:
here, all blocks are large except for the $k$-block $b$ consisting of the two $K^{k/2}$ and the~$K^{(k/2)-1}$, which has size~${3\over 2}k - 1$.%
 \COMMENT{}

Indeed, we shall prove something considerably stronger: that the only obstruction to the existence of a unified \td\ is a $k$-block that is not only not large but positioned exactly like $b$ in Figure~\ref{fig:robust_needed}, inside the union of a $k$-separator and a larger separator crossing it.

Given integers $k$ and~$K$ (where $k\le K$ is the interesting case, but it is important formally to allow $k>K$),%
   \COMMENT{}
   a $k$-inseparable set $U$ is called {\em $K$-robust}%
   \footnote{The parameter $k$ is important here, too, but we suppress it for readability; it will always be stated explicitly in the context.}
   if for every $k$-separation \CD\ with $U\sub D$ and every separation \AB\ of order at most $K$ such that $\AB\nparallel \CD$ and
\begin{equation}\label{robustbdries}
   |\partial (A\cap D)| <k>|\partial (B\cap D)|\,,
\end{equation}
we have either $U\sub A$ or $U\sub B$. By $U\sub D$ and~\eqref{robustbdries}, the only way in which this can fail is that $|A\cap B|>k$ and $U$ is contained in the union $T$ of the boundaries of $A\cap D$ and $B\cap D$ (Fig.~\ref{fig:robust}):%
   \COMMENT{}
  exactly the situation of $b$ in~Figure~\ref{fig:robust_needed}.

   \begin{figure}[htpb]
\begin{center}
   	  \includegraphics[width=4cm]{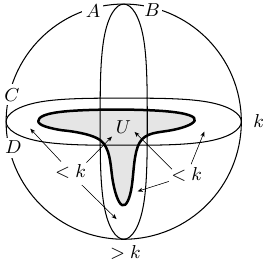}
   	  \caption{The shaded set $U$ is $k$-inseparable but not $K$-robust.}
   	  \label{fig:robust}\vskip-18pt\vskip0pt
\end{center}
   \end{figure}

It is obvious from the definition of robustness that
\begin{equation}\label{KinsepKrobust}
\text{\em for $k\ge K$, every $k$-inseparable set is $K$-robust.}
\end{equation}

Let us call a $k$-inseparable set, in particular a $k$-block of~$G$, {\em robust\/} if it is $K$-robust for every~$K$ (equivalently, for $K=|G|$). Our next lemma says that large $k$-blocks, those of size at least~$\lfloor{3\over 2}k\rfloor$, are robust. But there are more kinds of robust sets than these: the vertex set of any $K^{k+1}$ subgraph, for example, is a robust $k$-inseparable set.%
   \COMMENT{}

\begin{lem}\label{robusti}
Large $k$-blocks are robust.
\end{lem}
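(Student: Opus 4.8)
The plan is to argue by contradiction, reusing the cross-diagram bookkeeping of Section~\ref{sec_nested}. Suppose a large $k$-block $U$, so $|U|\ge\lfloor\frac{3}{2}k\rfloor$, fails to be $K$-robust for some~$K$. Then there are a $k$-separation $\CD$ with $U\sub D$ and a separation $\AB$ with $\AB\nparallel\CD$ and $|\partial(A\cap D)|<k>|\partial(B\cap D)|$, such that $U\not\sub A$ and $U\not\sub B$. (In fact neither the bound $K$ nor the crossing of $\AB$ and $\CD$ will be used; only the two boundary inequalities and $U\sub D$ enter the argument.)

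First I would pin down where $U$ lies. In the cross-diagram $\{\AB,\CD\}$, the corner separations $\csepn ADBC$ and $\csepn BDAC$ have as separators the corner boundaries $\partial(A\cap D)$ and $\partial(B\cap D)$ respectively, hence each has order at most $k-1$. As $U\sub D$ and $U$ is $k$-inseparable, the separation $\csepn ADBC$ cannot separate $U$, so $U$ lies in one of its two sides; since $U\sub A\cap D$ would give $U\sub A$, we must have $U\sub B\cup C$. The symmetric argument with $\csepn BDAC$ gives $U\sub A\cup C$. Intersecting both inclusions with $D$ and using $A\cup B=V$,
$$U\ \sub\ D\cap(B\cup C)\cap(A\cup C)\ =\ (C\cap D)\cup(A\cap B\cap D)\ =:\ T,$$
which is exactly the set described in the remark after~\eqref{robustbdries}.

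It then remains to bound $|T|$ below $\lfloor\frac{3}{2}k\rfloor$. Put $c:=|A\cap B\cap C\cap D|$ (the centre), and, since $A\cup B=V$, split the sets meeting $C\cap D$ and $A\cap B\cap D$ by setting $p:=|A\cap C\cap D|-c$, $q:=|B\cap C\cap D|-c$ and $r:=|A\cap B\cap D|-c$. Then $|C\cap D|=c+p+q$, $|\partial(A\cap D)|=c+p+r$, and $|\partial(B\cap D)|=c+q+r$. Since $\CD$ is a $k$-separation, $c+p+q=k$, while the boundary inequalities give $c+p+r\le k-1$ and $c+q+r\le k-1$; subtracting yields $r\le p-1$ and $r\le q-1$, hence $k=c+p+q\ge 2r+2$ and therefore $r\le\lfloor k/2\rfloor-1$. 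Finally, since $(C\cap D)\cap(A\cap B\cap D)$ is the centre, $|T|=|C\cap D|+|A\cap B\cap D|-c=k+r\le k+\lfloor k/2\rfloor-1=\lfloor\frac{3}{2}k\rfloor-1$, so $|U|\le|T|<\lfloor\frac{3}{2}k\rfloor$ --- contradicting largeness. I expect the only delicate step to be the first one: extracting from $k$-inseparability that $U$ sits on one side of each low-order corner separation. This rests on two easily checked facts --- that $\csepn ADBC$ and $\csepn BDAC$ really are separations of $G$ with separators the corner boundaries, so of order $<k$, and that $U\sub D$ is small enough to be absorbed by one side each time --- after which the remaining computation inside the cross-diagram is routine.
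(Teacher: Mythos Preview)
Your proof is correct and follows essentially the same route as the paper's. The paper invokes the remark after~\eqref{robustbdries} to obtain $U\sub T=\partial(A\cap D)\cup\partial(B\cap D)$ directly, whereas you spell this step out via the two low-order corner separations; and where the paper sets $\ell:=|(A\cap B)\sm C|$ and argues $2\ell\le k-2$ from~\eqref{robustbdries}, you track the same quantity as $r=|A\cap B\cap D|-c$ with an equivalent computation. The arithmetic and the conclusion $|T|\le\lfloor\tfrac32 k\rfloor-1$ are identical.
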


\begin{proof}
By the remark following the definition of `$K$-robust', it suffices to show that the set $T = \partial(A\cap D)\cup \partial (B\cap D)$ in Figure~\ref{fig:robust} has size at most~$\frac{3}{2}k-1$, regardless of the order of $\AB$. Let $\ell := |(A\cap B)\sm C|$ be the size of the common link of the corners $A\cap D$ and $B\cap D$. By $|C\cap D| = k$ and~\eqref{robustbdries} we have $2\ell\le k-2$,%
   \COMMENT{}
   so $|T| = k+\ell\le {3\over2}k-1$ as desired.
\end{proof}

For the remainder of this paper, a {\em block\/} of $G$ is again a subset of $V(G)$ that is a $k$-block for some~$k$.
The smallest $k$ for which a block $b$ is a $k$-block is its {\em rank\/}; let us denote this by~$r(b)$.
A block $b$ that is given without a specified~$k$ is called {\em $K$-robust} if it is $K$-robust as an $r(b)$-inseparable set. When we speak of a `robust $k$-block'~$b$, however, we mean the (stronger, see below) robustness as a $k$-inseparable set, not just as an $r(b)$-inseparable set.

   It is not difficult to find examples of $K$-robust blocks that are $k$-blocks but are not $K$-robust as a $k$-block,%
   \COMMENT{}
   only as an $\ell$-block for some $\ell<k$.%
   \COMMENT{}
   A~$k$-inseparable set that is $K$-robust as a $k'$-inseparable set for $k'>k$, however, is also $K$-robust as a $k$-inseparable set. More generally:

\begin{lem}\label{robustii} Let $k$, $k'$ and $K$ be integers. 
\begin{enumerate}[\rm (i)]
\item Every $k$-inseparable set $I$ containing a $K$-robust $k'$-inseparable set~$I'$ with $k\le k'$ is $K$-robust.
\item Every block $b$ that contains a $K$-robust block $b'$ is $K$-robust.
\end{enumerate}
\end{lem}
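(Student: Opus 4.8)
The plan is to prove (i) directly from the definition of $K$-robustness, and then to obtain (ii) from (i) together with a short remark about ranks. For (i), fix a $k$-separation $\CD$ with $I\sub D$ and a separation $\AB$ of order at most $K$ with $\AB\nparallel\CD$ and $|\partial(A\cap D)|<k>|\partial(B\cap D)|$; we must show $I\sub A$ or $I\sub B$. \emph{Step~1} will show that $I'\sub A$ or $I'\sub B$. If $k=k'$, this is immediate from the $K$-robustness of $I'$ applied to $\CD$ and $\AB$. If $k<k'$, then $I'\sub D$ and $|I'|>k'$ give $|D\sm C|=|D|-k\ge k'-k+1$, so we may choose $Z\sub D\sm C$ with $|Z|=k'-k$ and pass to the $k'$-separation $(C\cup Z,D)$, whose separator is $(C\cap D)\cup Z$ and which still has $I'\sub D$. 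If $(C\cup Z,D)\nparallel\AB$, then in the cross-diagram $\{\AB,(C\cup Z,D)\}$ the boundary of the corner $A\cap D$ is the original $\partial(A\cap D)$ together with $A\cap Z$, so it has fewer than $k+(k'-k)=k'$ vertices, and the same holds for $B\cap D$; hence the $K$-robustness of $I'$ as a $k'$-inseparable set yields $I'\sub A$ or $I'\sub B$. If instead $(C\cup Z,D)$ is nested with $\AB$, we inspect the four $\le$-comparabilities of $(C\cup Z,D)$ with $\AB$ and $\BA$: $(C\cup Z,D)\le\AB$ forces $\CD\le\AB$ and $(C\cup Z,D)\le\BA$ forces $\CD\le\BA$, both contradicting $\AB\nparallel\CD$, while $\AB\le(C\cup Z,D)$ gives $I'\sub D\sub B$ and $\BA\le(C\cup Z,D)$ gives $I'\sub D\sub A$. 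So in all cases $I'\sub A$ or $I'\sub B$.

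\emph{Step~2} bootstraps this to $I$, using only that $I$ is $k$-inseparable; assume $I'\sub A$ (the case $I'\sub B$ is symmetric, swapping $A$ and $B$). The corner separations $\csepn ADBC$ and $\csepn BDAC$ have orders $|\partial(A\cap D)|<k$ and $|\partial(B\cap D)|<k$, so the $k$-inseparable set $I$ lies on one side of each, and combining this with $I\sub D$ gives $I\sub A$, or $I\sub B$, or $I\sub T$, where $T:=(C\cap D)\cup(A\cap B\cap D)=\partial(A\cap D)\cup\partial(B\cap D)$. The first two cases are the desired conclusion; in the third, $I'\sub T\cap A=(A\cap C\cap D)\cup(A\cap B\cap D)=\partial(A\cap D)$, so $|I'|\le|\partial(A\cap D)|<k\le k'$, contradicting $|I'|>k'$. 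Hence $I\sub A$ or $I\sub B$, which proves (i).

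For (ii), let $b\supseteq b'$ with $b'$ a $K$-robust block; by definition $b'$ is $K$-robust as an $r(b')$-inseparable set, and $b$ is $r(b)$-inseparable. It remains to check that $r(b)\le r(b')$, for then (i) applies with $k=r(b)$, $k'=r(b')$, $I=b$, $I'=b'$, and shows that $b$ is $K$-robust as an $r(b)$-inseparable set, i.e.\ a $K$-robust block. If $b=b'$ this is clear. If $b\supsetneq b'$, then $b$ cannot be $r(b')$-inseparable, since $b$ properly contains $b'$ and $b'$ is a maximal $r(b')$-inseparable set; hence $r(b)<r(b')$, because $r(b')\le r(b)$ would force the $r(b)$-inseparable set $b$ to be $r(b')$-inseparable too.

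The step I expect to be the main obstacle is Step~1 in the case $k<k'$: one cannot feed $\CD$ directly into the robustness of $I'$ because it has the wrong order, and the natural fix of padding it up to a $k'$-separation can turn the crossing with $\AB$ into nestedness, which must then be handled separately; one must also check that the padding raises each relevant corner boundary by at most $|Z|=k'-k$ vertices, which is exactly what keeps these boundaries strictly below $k'$.
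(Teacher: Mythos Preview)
Your proof is correct, and part (ii) is essentially identical to the paper's. For part (i) you share the paper's core idea of padding the $k$-separator $(C,D)$ to a $k'$-separator so that the $K$-robustness of $I'$ can be invoked, but the execution differs.

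The paper argues by contradiction: assuming $I$ is not $K$-robust, one has $I\subseteq T:=(C\cap D)\cup((A\cap B)\setminus C)$ straight away. The padding set is then chosen \emph{inside} $L=(A\cap B)\setminus C$ rather than arbitrarily in $D\setminus C$; this has two pleasant effects. First, the corners $A\cap D$ and $B\cap D$ of the new cross-diagram have \emph{the same} boundaries as before (since the added vertices lie in $A\cap B\cap D$, which is already part of each boundary), so the inequalities $|\partial(A\cap D)|<k$ and $|\partial(B\cap D)|<k$ persist unchanged. Second, the contradiction hypothesis $I\not\subseteq A$ and $I\not\subseteq B$ immediately rules out the nested cases $(A,B)\le(C',D)$ and $(B,A)\le(C',D)$ (these would give $D\subseteq B$ or $D\subseteq A$), while the other two nested cases would force $(C,D)\|(A,B)$; so the padded separation automatically still crosses $(A,B)$, and no separate case analysis is needed. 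One then gets $I'$ inside a corner, hence with a vertex in the interior of that corner (as $|I'|>k'\ge k>|\partial(A\cap D)|$), contradicting $I'\subseteq I\subseteq T$.

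Your route---pad arbitrarily into $D\setminus C$, accept the weaker bound $<k'$ on the new corner boundaries, handle the nested case by hand, and then bootstrap from $I'\subseteq A$ to $I\subseteq A$ via the two low-order corner separations---works just as well; it is simply a little longer. The ``main obstacle'' you anticipated is genuine but, as you found, easy to resolve; the paper sidesteps it entirely by its more careful choice of where to pad.
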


\begin{proof}
(i) Suppose that $I$ is not $K$-robust, and let this be witnessed by a $k$-separation \CD\ crossed by a separation \AB\ of order $m\le K$. Put $S:= C\cap D$ and $L:= (A\cap B)\sm C$. 
Then $I\sub S\cup L$, as remarked after the definition of `$K$-robust'.

Extend $S$ into $L$ to a $k'$-set~$S'$ that is properly%
   \COMMENT{}
   contained in $S\cup L$ (which is large enough, since it contains $I'\sub I$), and put $C':= C\cup S'$. 
Then $(C',D)$ is a $k'$-separation with separator~$S'$ and corners $D\cap A$ and $D\cap B$ with~\AB, whose boundaries by assumption have size less than~$k\le k'$. 
As $I'$ is $K$-robust, it lies in one of these corners, say  $I'\sub A\cap D$. Since
  $$|I'| > k' \ge k > |\partial (A\cap D)|\,,$$
this implies that~$I'$ has a vertex in the interior of the corner $A\cap D$. As $I'\sub I$, this contradicts the fact that $I\sub S\cup L$.%
   \COMMENT{}

(ii) The block $b$ is an $r(b)$-inseparable set containing the $K$-robust $r(b')$-inseparable set $b'$. If $b = b'$ then $r(b) = r(b')$. If $b\supsetneq b'$, then $b'$ is not maximal as an $\ell$-inseparable set for any $\ell\le r(b)$, giving $r(b') > r(b)$. Hence $r(b) \le r(b')$ either way, so $b$ is a $K$-robust block by~(i).
\end{proof}

Let us call two blocks {\em distinguishable\/} if neither contains the other. It is not hard to show that distinguishable blocks $b_1, b_2$ can be separated in $G$ by a separation of order $r\le \min\{r(b_1),r(b_2)\}$.%
   \COMMENT{}
   We denote the smallest such $r$ by
 $$\kappa(b_1,b_2)\le \min\{r(b_1),r(b_2)\},$$
and say that $b_1$ and $b_2$ are {\em $k$-distinguishable\/} for a given integer~$k$ if $\kappa(b_1,b_2)\le k$. Note that distinct $k$-blocks are $k$-distinguishable, but they might also be $\ell$-distin\-guish\-able for some $\ell < k$.

A~set \cS\ of separations {\em distinguishes\/} two $k$-blocks if it contains a separation of order at most~$k$ that separates them. It {\em distinguishes\/} two blocks%
   \COMMENT{}
  $b_1, b_2$ given without a specified~$k$ if it contains a separation of order $r\le\min\{r(b_1),r(b_2)\}$ that separates them.%
   \footnote{Unlike in the definition just before Theorem~\ref{thm_main}, we no longer require that the blocks we wish to separate be \cS-inseparable for the entire set~\cS.}%
   \COMMENT{}
   If \cS\ contains a separation of order $\kappa(b_1,b_2)$ that separates two blocks or $k$-blocks $b_1,b_2$, we say that \cS\ distinguishes them {\em efficiently\/}.

\begin{thm}\label{thm_kinsep_sys}
For every finite graph $G$ and every integer $k\ge 0$ there is a tight, nested, and $\Aut(G)$-invariant \sys\ $\cN_k$ that distinguishes every two $k$-distinguishable $k$-robust blocks efficiently. In particular,%
   \COMMENT{}
   $\cN_k$~distinguishes every two $k$-blocks efficiently.
   \end{thm}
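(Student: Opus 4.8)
\emph{Strategy.}
I construct $\cN_k$ by induction on $\ell=0,1,\dots,k$, keeping at each stage a tight, nested, $\Aut(G)$-invariant \sys\ $\cN^{\le\ell}$, refining $\cN^{\le\ell-1}$, with the property that every two $k$-robust blocks $b_1,b_2$ with $\kappa(b_1,b_2)\le\ell$ are separated by a separation of order $\kappa(b_1,b_2)$ in $\cN^{\le\ell}$; then I set $\cN_k:=\cN^{\le k}$. For $\ell=0$ let $\cN^{\le 0}$ consist of all $0$-separations \AB\ in which $A$ is a non-empty proper union of connected components of~$G$: this is vacuously tight, nested and canonical, and it separates any two vertex sets lying in different components, so it handles all pairs with $\kappa=0$. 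The last sentence of the theorem follows from the main statement, since two distinct $k$-blocks are $k$-distinguishable and, being $k$-inseparable, $k$-robust (by \eqref{KinsepKrobust} and Lemma~\ref{robustii}).

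\emph{The inductive step.}
Write $\cN^{<\ell}:=\cN^{\le\ell-1}$. Let $\cI_\ell$ be the set of all $\ell$-blocks of~$G$ that contain a $k$-robust block, and let $\cS_\ell$ be the set of all proper $\ell$-separations of~$G$ that are nested with every separation in~$\cN^{<\ell}$; this is a \sys. Every member of $\cI_\ell$ is $\ell$-inseparable and hence $\cS_\ell$-inseparable, so $(\cS_\ell,\cI_\ell)$ is valid input for Theorem~\ref{thm_main}. Granting the key claim below that $\cS_\ell$ separates $\cI_\ell$ well, Theorem~\ref{thm_main} and Corollary~\ref{cor_properties} supply a canonical nested subsystem $\cN(\cS_\ell,\cI_\ell)\sub\cS_\ell$ that weakly distinguishes all weakly $\cS_\ell$-distinguishable members of~$\cI_\ell$ and is invariant under every automorphism of~$G$ (all of which fix $\cS_\ell$ and~$\cI_\ell$, these being defined purely from the graph). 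Put $\cN^{\le\ell}:=\cN^{<\ell}\cup\cN(\cS_\ell,\cI_\ell)$: since $\cN(\cS_\ell,\cI_\ell)\sub\cS_\ell$, each of its separations is nested with all of~$\cN^{<\ell}$, and $\cN(\cS_\ell,\cI_\ell)$ is nested in itself, so $\cN^{\le\ell}$ is nested; it is $\Aut(G)$-invariant by induction and Corollary~\ref{cor_properties}. For the separating property, take $k$-robust blocks $b_1,b_2$ with $\kappa(b_1,b_2)=\ell$. As $\ell\le r(b_i)$, each $b_i$ is $\ell$-inseparable and so lies in an $\ell$-block $I_i\in\cI_\ell$, and $I_1\ne I_2$ (otherwise that common $\ell$-block, being $\ell$-inseparable, would prevent any separation of order $\le\ell$ from separating $b_1$ from~$b_2$, against $\kappa(b_1,b_2)=\ell$); one then checks $\kappa(I_1,I_2)=\ell$. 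Hence some $\ell$-separation separates $I_1$ from~$I_2$, and uncrossing it against the nested system~$\cN^{<\ell}$ (whose members have order $<\ell$, so the order cannot drop below $\kappa(I_1,I_2)=\ell$) produces one lying in $\cS_\ell$; thus $I_1,I_2$ are weakly $\cS_\ell$-distinguishable, so $\cN(\cS_\ell,\cI_\ell)$ contains a separation \EF\ with, say, $I_1\sub E$ and $I_2\sub F$. Since $|b_i|>\ell\ge|E\cap F|$, this \EF\ properly separates $b_1$ from~$b_2$; being in $\cS_\ell$ it has order $\le\ell$, and since it separates $b_1$ from~$b_2$ its order is $\ge\kappa(b_1,b_2)=\ell$, hence exactly~$\ell$. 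Pairs with $\kappa<\ell$ are handled by~$\cN^{<\ell}$, and $\ell=k$ finishes the induction. (Tightness: every separation actually used above may be taken tight without changing its order or which sets it separates; I suppress this bookkeeping.)

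\emph{The crux.}
The main obstacle is the claim that $\cS_\ell$ separates $\cI_\ell$ well. Let $\AB,\CD\in\cS_\ell$ cross, with $I_1\sub A\cap C$ and $I_2\sub B\cap D$ for some $I_1,I_2\in\cI_\ell$; the natural candidate for the separation required by the definition is the corner separation $(A\cap C,\,B\cup D)$. It has $I_1$ on one side and $B\cup D$ on the other, it is proper because $|I_1|,|I_2|>\ell$ exceed its separator size, and it is nested with~$\cN^{<\ell}$ because any separation nested with both \AB\ and \CD\ is nested with every corner separation of $\{\AB,\CD\}$ (a short case check from \eqref{cornerlessthanhalf} and \eqref{cornersnested}). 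The only real point is that it has order exactly~$\ell$: from $A\cup B=C\cup D=V$ one gets the identity $|\partial(A\cap C)|+|\partial(B\cap D)|=|A\cap B|+|C\cap D|=2\ell$, so it suffices to show $|\partial(B\cap D)|\ge\ell$, i.e.\ that $I_1$ and $I_2$ cannot be separated by fewer than $\ell$ vertices. If $G$ is $\ell$-connected this is immediate, since proper separations then have order at least~$\ell$. In general this is exactly where robustness and the induction enter: if $I_1$ and $I_2$ were separated by $<\ell$ vertices then so would be the $k$-robust blocks $b_1\sub I_1$ and $b_2\sub I_2$, giving $\kappa(b_1,b_2)<\ell$, so $b_1$ and $b_2$ are already distinguished by a separation of order $<\ell$ in~$\cN^{<\ell}$; feeding this back, and using that $I_1$ and $I_2$, being $\ell$-inseparable and containing $k$-robust blocks, are themselves $k$-robust as $\ell$-inseparable sets (Lemma~\ref{robustii}(i)), one shows that a crossing pair with $I_1,I_2$ in opposite corners and a corner separation of order $<\ell$ cannot occur --- it would place $I_1$ (or $I_2$) inside the union of two crossing separators of size $<\ell$, exactly the forbidden position of Figure~\ref{fig:robust_needed}. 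Pinning down this exclusion, and carrying the compatible bookkeeping of ranks, tightness and canonicity through the induction, is where the genuine work lies; everything else follows from the machinery of Sections~\ref{sec_tree}--\ref{sec_sys}.
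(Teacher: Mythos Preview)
Your overall strategy matches the paper's: build $\cN_k$ inductively in $\ell$, feeding each stage into Theorem~\ref{thm_main}. But two of the steps you wave through are exactly where the real work is, and as written they do not go through.

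\textbf{The ``separates well'' claim.} You apply Theorem~\ref{thm_main} to the global pair $(\cS_\ell,\cI_\ell)$. For this you must show that whenever $\AB,\CD\in\cS_\ell$ cross with $I_1\sub A\cap C$ and $I_2\sub B\cap D$, the corner $\csepn ACBD$ lies in~$\cS_\ell$, in particular has order exactly~$\ell$. Your argument reduces this to ``$I_1$ and $I_2$ cannot be separated by fewer than $\ell$ vertices'', but nothing forces $\kappa(I_1,I_2)=\ell$: the two $\ell$-blocks $I_1,I_2\in\cI_\ell$ may well lie in different $\cN^{<\ell}$-blocks, and then $\kappa(I_1,I_2)<\ell$ is possible. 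In that case any $\EF$ with $E\sub A\cap C$ and $F\supseteq B\cup D$ has order at most $|\partial(A\cap C)|$, which can be strictly less than~$\ell$, so no such $\EF$ lies in~$\cS_\ell$ and ``separates well'' fails. Your appeal to robustness (``it would place $I_1$ inside the union of two crossing separators of size~$<\ell$'') does not match the hypothesis of the definition: you would need \emph{both} $|\partial(A\cap D)|<\ell$ and $|\partial(B\cap D)|<\ell$, and you only have one small corner. The paper avoids this entirely by working inside a single $\cN_{k-1}$-block~$b$ at a time: there $\kappa(I_1,I_2)=k$ for all $I_1,I_2\in\cI_b$ by~\eqref{ind}, so the corner orders come out right; nestedness across different blocks~$b$ is then handled by Lemma~\ref{lem_block_sep}.

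\textbf{The uncrossing step.} You also need that $\cS_\ell$ (or its analogue) actually distinguishes the relevant pairs~--- the paper's claim~$(*)$. Your justification (``uncrossing it against $\cN^{<\ell}$, whose members have order $<\ell$, so the order cannot drop below~$\ell$'') only gives a \emph{lower} bound on the replacement corner's order; you need it to be \emph{exactly}~$\ell$, since $\cS_\ell$ consists of $\ell$-separations. When $\AB$ of order~$k$ crosses $\CD\in\cN_{k-1}$ of order~$\ell<k$, the two corners containing $I_1,I_2$ have orders summing to more than $k+\ell$ only once you know the opposite two corners have orders~$<\ell$; the paper gets this by choosing $\AB$ to maximise nestedness with~$\cN_{k-1}$, and then uses the inductive hypothesis~(v) --- that every $\CD\in\cN_\ell\sm\cN_{\ell-1}$ separates two $K$-robust $\ell$-blocks $J_1,J_2$ with $\kappa(J_1,J_2)=\ell$ --- together with the $K$-robustness of~$J_2$ to derive a contradiction. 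You neither maintain the analogue of~(v) in your induction nor invoke robustness in the right configuration, so this step is a genuine gap rather than suppressed bookkeeping.
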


\begin{proof} Let us rename the integer $k$ given in the theorem as~$K$. Recursively for all integers $0 \le k \le K$ we shall construct a sequence of \sys s $\cN_k$ with the following properties:

\begin{enumerate}[(i)]
\item $\cN_k$ is tight, nested, and $\Aut(G)$-invariant;\label{props}
\item $\cN_{k-1}\sub\cN_k$ (put $\cN_{-1} := \es$);\label{inclusion}
\item every separation in $\cN_k\sm \cN_{k-1}$ has order~$k$;\label{order}
\item $\cN_k$ distinguishes every two $K$-robust $k$-blocks.\label{disting}%
   \COMMENT{}
\item every separation in $\cN_k\sm \cN_{k-1}$ separates some $K$-robust $k$-blocks that are not distinguished by~$\cN_{k-1}$.\label{lean}%
   \COMMENT{}
   \end{enumerate}
We claim that $\cN_K$ will satisfy the assertions of the theorem for $k=K$. Indeed, consider two $K$-distinguishable $K$-robust blocks $b_1,b_2$. Then
 $$\kappa:= \kappa(b_1,b_2)\le \min\{K,r(b_1),r(b_2)\},$$
so $b_1,b_2$ are $\kappa$-inseparable and extend to distinct $\kappa$-blocks $b'_1,b'_2$. These are again $K$-robust, by Lemma~\ref{robustii}\,(i).%
   \COMMENT{}
   Hence by \eqref{disting}, $\cN_\kappa\sub\cN_K$ distinguishes $b'_1\supseteq b_1$ from $b'_2\supseteq b_2$, and it does so efficiently by definition of~$\kappa$.

   \COMMENT{}

It remains to construct the separation systems~$\cN_k$.

Let $k\ge 0$ be given, and assume inductively that we already have \sys s~$\cN_{k'}$ satisfying \eqref{props}--\eqref{lean} for $k' = 0,\ldots, k-1$. (For $k=0$ we have nothing but the definiton of $\cN_{-1} := \es$, which has $V(G)$ as its unique $\cN_{-1}$-block.) Let us show the following:
\begin{txteq}\label{ind}\rm
For all $0\le\ell\le k$, any two $K$-robust $\ell$-blocks $b_1,b_2$ that are not distinguished by~$\cN_{\ell-1}$ satisfy $\kappa(b_1,b_2)=\ell$.%
   \COMMENT{}
\end{txteq}
This is trivial for $\ell=0$; let $\ell>0$. If $\kappa(b_1,b_2) < \ell$, then the $(\ell-1)$-blocks $b'_1\supseteq b_1$ and $b'_2\supseteq b_2$ are distinct. By Lemma~\ref{robustii}\,(i) they are again $K$-robust. Thus by hypothesis~\eqref{disting} they are distinguished by~$\cN_{\ell-1}$, and hence so are $b_1$ and~$b_2$, contrary to assumption.

By hypothesis \eqref{order},%
   \COMMENT{}
   every $k$-block is $\cN_{k-1}$-inseparable, so it extends to some $\cN_{k-1}$-block; let $\cB$ denote the set of those $\cN_{k-1}$-blocks that contain more than one $K$-robust $k$-block.%
   \COMMENT{}
   For each $b \in \cB$ let $\cI_b$ be the set of all $K$-robust $k$-blocks contained in~$b$. 
Let $\cS_b$ denote the set of all those $k$-separations of~$G$ that separate some two elements of $\cI_b$ and are nested with all the separations in~$\cN_{k-1}$.

Clearly $\cS_b$ is symmetric and the separations in $\cS_b$ are proper (since they distinguish two $k$-blocks), so $\cS_b$ is a separation system of~$G$. By~\eqref{ind} for $\ell=k$, the separations in~$\cS_b$ are tight. Our aim is to apply Theorem \ref{thm_main} to extract from~$\cS_b$ a nested subsystem $\cN_b$ that we can add to~$\cN_{k-1}$.

Before we verify the premise of Theorem \ref{thm_main}, let us prove that it will be useful: that the nested separation system $\cN_b\sub\cS_b$ it yields can distinguish%
   \footnote{As the elements of $\cI_b$ are $k$-blocks, we have two notions of `distinguish' that could apply: the definition given before Theorem \ref{thm_main}, or that given before Theorem~\ref{thm_kinsep_sys}. However, as $\cS_b$ consists of $k$-separations and all the elements of $\cI_b$ are $\cS_b$-inseparable,%
   \COMMENT{}
   the two notions coincide.\looseness=-1}
   all the elements of~$\cI_b$. This will be the case only if $\cS_b$ does so, so let us prove this first:

\claim{($\ast$)}{$\cS_b$ distinguishes every two elements of~$\cI_b$.}
For a proof of $(*)$ we have to find for any two $k$-blocks $I_1,I_2 \in \cI_b$ a separation in~$\cS_b$ that separates them. Applying Lemma~\ref{block_lem} with the set \cS\ of all separations of order at most~$k$, we can find a separation $\AB\in\cS$ such that $I_1 \sub A$ and $I_2 \sub B$. Choose \AB\ so that it is nested with as many separations in $\cN_{k-1}$ as possible. We prove that $\AB\in\cS_b$, by showing that \AB\ has order exactly~$k$ and is nested with every separation $\CD\in\cN_{k-1}$. Let $\CD\in\cN_{k-1}$ be given.

Being elements of~$\cI_b$, the sets $I_1$ and~$I_2$ cannot be separated by fewer than $k$ vertices, by~\eqref{ind}. Hence \AB\ has order exactly~$k$. Since $I_1$ is $k$-inseparable it lies on one side of~\CD, say in~$C$, so $I_1\sub A\cap C$. As \CD\ does not separate $I_1$ from~$I_2$,%
   \COMMENT{}
   we then have $I_2\sub B\cap C$. 

Let $\ell < k$ be such that $\CD\in\cN_\ell\sm\cN_{\ell-1}$. By hypothesis~\eqref{lean} for~$\ell$, 
there are $K$-robust $\ell$-blocks $J_1\sub C$ and $J_2\sub D$ that are not distinguished by~$\cN_{\ell-1}$. By~\eqref{ind},
\begin{equation}\label{notbym}
\kappa(J_1,J_2) = \ell.
\end{equation}
 \COMMENT{}
Let us show that we may assume the following:
\begin{txteq}\label{cornernested}\rm
The corner separations of the corners $A\cap C$ and $B\cap C$ are nested with every separation $(C',D')\in\cN_{k-1}$ that \AB\ is nested with.
\end{txteq}
Since \CD\ and $(C',D')$ are both elements of~$\cN_{k-1}$, they are nested with each other. Thus,
 $$\AB \| (C',D') \| \CD.$$
Unless \AB\ is nested with \CD\ (in which case our proof of~$(*)$ is complete), this implies by Lemma \ref{lem_nested_or} that $(C',D')$ is nested with all the corner separations of the cross-diagram for $(A,B)$ and $(C,D)$, especially with those of the corners $A\cap C$ and $B\cap C$ that contain $I_1$ and~$I_2$. This proves~\eqref{cornernested}.

Since the corner separations of $A\cap C$ and $B\cap C$ are nested with the separation $\CD\in\cN_{k-1}$ that \AB\ is not nested with (as we assume),%
   \COMMENT{}
\eqref{cornernested}~and the choice of $(A,B)$ imply that
$$|\partial (A\cap C)|\geq k+1\quad\text{and}\quad |\partial (B\cap C)|\geq k+1.$$
Since the sizes of the boundaries of two opposite corners sum to
$$|A\cap B| + |C\cap D| = k+\ell,$$
this means that the boundaries of the corners $A\cap D$ and $B\cap D$ have sizes~$<\ell$.
Since $J_2$ is $K$-robust as an $\ell$-block, we thus have $J_2\sub A\cap D$ or $J_2\sub B\cap D$, say the former.
 But as $J_1\sub C\sub B\cup C$, this contradicts~\eqref{notbym}, completing the proof of~$(*)$.

\medbreak

Let us now verify the premise of Theorem \ref{thm_main}:

\claim{($\ast\ast$)}{$\cS_b$ separates $\cI_b$ well.}
Consider a pair $\AB,\CD\in\cS_b$ of crossing separations with sets $I_1,I_2 \in \cI_b$ such that $I_1 \sub \sep AC$ and $I_2 \sub \sep BD$. We shall prove that $\csepn ACBD\in\cS_b$.

By \eqref{ind} and $I_1,I_2 \in \cI_b$, the boundaries of the corners \sep AC\ and \sep BD\ have size at least~$k$. Since their sizes sum to $|A\cap B| + |C\cap D| = 2k$, they each have size exactly~$k$. Hence \csepn ACBD has order~$k$ and is nested with every separation $(C',D')\in\cN_{k-1}$%
   \COMMENT{}
   by Lemma~\ref{lem_nested_or}, because $\AB,\CD\in\cS_b$ implies that \AB\ and~\CD\ are both nested with $(C',D')\in\cN_{k-1}$. This completes the proof of~$(**)$.

\medbreak

By ($\ast$) and~($\ast\ast$), Theorem \ref{thm_main} implies that $\cS_b$ has a nested $\cI_b$-relevant subsystem $\cN_b := \cN(\cS_b,\cI_b)$ that weakly distinguishes all the sets in~$\cI_b$. But these are $k$-inseparable and hence of size~$>k$, so they cannot lie inside a $k$-separator. So $\cN_b$ even distinguishes the sets in~$\cI_b$ properly. Let
 $$\cN_\cB := \bigcup_{b\in\cB}\cN_b\quad\text{and}\quad \cN_k := \cN_{k-1} \cup \cN_\cB.$$

Let us verify the inductive statements \eqref{props}--\eqref{lean} for~$k$. We noted earlier that every $\cS_b$ is tight, hence so is every~$\cN_b$. The separations in each $\cN_b$ are nested with each other and with~$\cN_{k-1}$. Separations from\vadjust{\penalty-99} different sets~$\cN_b$ are nested by Lemma~\ref{lem_block_sep}. So the entire set $\cN_k$ is nested. Since $\cN_{k-1}$ is $\Aut(G)$-invariant, by hypothesis~\eqref{props}, so is~$\cB$. For every automorphism~$\alpha$ and every $b\in\cB$ we then have $\cI_{b^\alpha} = (\cI_b)^\alpha$ and $\cS_{b^\alpha} = (\cS_b)^\alpha$, so Corollary~\ref{cor_properties} yields $(\cN_b)^\alpha = \cN_{b^\alpha}$. Thus, $\cN_\cB$~is $\Aut(G)$-invariant too, completing the proof of~(i).
Assertions \eqref{inclusion} and~\eqref{order} hold by definition of~$\cN_k$. Assertion~\eqref{disting} is easy too: if two $K$-robust $k$-blocks are not distinguished by~$\cN_{k-1}$ they will lie in the same $\cN_{k-1}$-block~$b$, and hence be distinguished by~$\cN_b$. Assertion~\eqref{lean} holds, because each $\cN_b$ is $\cI_b$-relevant.
   \end{proof} 

Let us call two blocks $b_1,b_2$ of~$G$ {\em robust\/} if there exists a $k$ for which they are robust $k$-blocks.%
   \footnote{By Lemma~\ref{robustii}\,(i), this is equivalent to saying that they are robust $r(b_i)$-blocks, that is, $K$-robust $r(b_i)$-blocks for $K=|G|$.}
   For $k=|G|$,%
   \COMMENT{}
   Theorem~\ref{thm_kinsep_sys} then yields our `unified' nested separation system that separates all robust blocks by a separation of the lowest possible order:

\begin{cor}
For every finite graph $G$ there is a tight, nested, and $\Aut(G)$-invariant \sys\ $\cN$ that distinguishes every two distinguishable robust blocks efficiently.\qed
\end{cor}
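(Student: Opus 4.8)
The plan is to deduce this directly from Theorem~\ref{thm_kinsep_sys} by taking the integer parameter there to be $|G|$, and setting $\cN:=\cN_{|G|}$. Theorem~\ref{thm_kinsep_sys} then immediately gives that $\cN$ is tight, nested, and $\Aut(G)$-invariant, so the only point left to check is that this single system efficiently distinguishes \emph{every} pair of distinguishable robust blocks, and not merely the $|G|$-robust, $|G|$-distinguishable ones named in the theorem.

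The first small thing I would do is reconcile the terminology. By the definition given just before the corollary, a block $b$ is \emph{robust} exactly when it is $K$-robust as an $r(b)$-inseparable set for $K=|G|$ --- which is precisely what `$|G|$-robust block' means in Theorem~\ref{thm_kinsep_sys} once we put $k=|G|$ there. So the robust blocks of the corollary are literally the $|G|$-robust blocks of the theorem, and no translation is needed. Next I would observe that every pair of distinguishable blocks is automatically $|G|$-distinguishable: for distinguishable $b_1,b_2$ we have $\kappa(b_1,b_2)\le\min\{r(b_1),r(b_2)\}$, and since any block $b$ is $r(b)$-inseparable we have $r(b)<|b|\le|G|$, whence $\kappa(b_1,b_2)<|G|$. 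Thus the hypotheses of Theorem~\ref{thm_kinsep_sys} (with $k=|G|$) are met for $b_1,b_2$, and its conclusion is exactly that $\cN$ distinguishes them efficiently, which is all the corollary asserts.

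I do not expect any real obstacle here: all the work sits in Theorem~\ref{thm_kinsep_sys}, and the corollary merely records that, for the purpose of distinguishing \emph{all} robust blocks at once, the quantifier `for every integer $k\ge 0$' in that theorem is already exhausted by the single value $k=|G|$, since a separation of order exceeding $|G|-1$ can separate nothing new. The only care needed is the bit of bookkeeping above, matching the definitions of `robust block' and of `$k$-distinguishable' to the statement of the theorem.
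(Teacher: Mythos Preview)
Your proposal is correct and follows exactly the paper's approach: the corollary is stated with a \qed, and the sentence preceding it says that it follows from Theorem~\ref{thm_kinsep_sys} with $k=|G|$. Your extra bookkeeping---matching ``robust block'' to ``$|G|$-robust block'' via the footnote before the corollary, and noting that distinguishable implies $|G|$-distinguishable---is precisely the reconciliation the paper leaves implicit.
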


Let us now turn the separation systems $\cN_k$ of Theorem~\ref{thm_kinsep_sys} and its proof into \td s:

\begin{thm}\label{thm_kinsep_treedec}
For every finite graph $G$ and every integer $K$ there is a sequence $\left(\cT_k,\cV_k\right)_{k \le K}$ of tree-decompositions such that, for all~$k \le K$,
\begin{enumerate}[\rm (i)]
\item every $k$-inseparable set is contained in a unique part of~$(\cT_k,\cV_k)\,;$
\item distinct $K$-robust $k$-blocks lie in different parts of $(\cT_k,\cV_k)\,;$%
   \COMMENT{}
   \item $\left(\cT_k,\cV_k\right)$ has adhesion at most $k\,;$
\item if $k > 0$ then $\left(\cT_{k-1},\cV_{k-1}\right) \minor \left(\cT_k,\cV_k\right);$
\item $\Aut(G)$ acts on $\cT_k$ as a group of automorphisms.
\end{enumerate}
\end{thm}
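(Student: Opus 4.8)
The plan is to feed the separation systems $\cN_k$ built in the proof of Theorem~\ref{thm_kinsep_sys} into the construction of Sections~\ref{sec_tree} and~\ref{sec_td}: put $\cT_k:=\cT(\cN_k)$ and $\cV_k:=(V_t)_{t\in\cT_k}$ with $V_t$ defined as in~\eqref{VtDef}. By Theorem~\ref{treedec} each $(\cT_k,\cV_k)$ is then a tree-decomposition of~$G$, so it only remains to verify (i)--(v). Two elementary facts will be used repeatedly. First, every separation in~$\cN_k$ has order at most~$k$; this follows by induction on~$k$ from $\cN_{-1}=\es$ together with the facts, recorded in that proof, that $\cN_{k-1}\sub\cN_k$ and that every separation in $\cN_k\sm\cN_{k-1}$ has order~$k$. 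Second, a $k$-inseparable set~$U$ has $|U|>k$; hence $U$ is $\cN_k$-inseparable, and it cannot be contained in the separator $\sep AB$ of any $\AB\in\cN_k$, nor in any adhesion set of $(\cT_k,\cV_k)$.

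For~(i): being $\cN_k$-inseparable, $U$ extends to an $\cN_k$-block, which is a part of $(\cT_k,\cV_k)$ by Theorem~\ref{treedec}(i); so $U$ lies in at least one part. If $U$ were contained in two distinct parts $V_s$ and $V_t$, choose an edge $e=\{\AB,\BA\}$ of~$\cT_k$ on the $s$--$t$ path, with $\AB\in\cN_k$. Then $s$ and~$t$ lie in different components of $\cT_k-e$, and by Lemma~\ref{lem_treeedge_sep} the parts in one component have union~$A$ and those in the other have union~$B$; after renaming $A,B$ we get $V_s\sub A$ and $V_t\sub B$, so $U\sub\sep AB$ with $|\sep AB|\le k$, contradicting $|U|>k$.

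For~(ii): the proof of Theorem~\ref{thm_kinsep_sys} shows that $\cN_k$ distinguishes any two distinct $K$-robust $k$-blocks $b_1,b_2$, so some $\AB\in\cN_k$ separates them, say with $b_1\sub A$ and $b_2\sub B$. Let $V_{t_1}$ and $V_{t_2}$ be the parts containing $b_1$ and~$b_2$ (unique by~(i)), and let $e=\{\AB,\BA\}$. If $t_1$ lay in the component of $\cT_k-e$ whose parts union to~$B$, then $b_1\sub V_{t_1}\sub B$ by Lemma~\ref{lem_treeedge_sep}, forcing $b_1\sub\sep AB$, which is impossible as $|b_1|>k$; so $t_1$ lies in the component whose parts union to~$A$ and, symmetrically, $t_2$ in the one whose parts union to~$B$, whence $t_1\ne t_2$. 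For~(iii): for an edge $e=\{\AB,\BA\}$ joining $t=[\AB]$ and $t'=[\BA]$ in~$\cT_k$ we have $V_t\sub A$ and $V_{t'}\sub B$ directly from~\eqref{VtDef}, so the adhesion set $V_t\cap V_{t'}$ is contained in~$\sep AB$, of size at most~$k$; hence $(\cT_k,\cV_k)$ has adhesion at most~$k$. For~(iv): since $\cN_{k-1}\sub\cN_k$, the relation $(\cT_{k-1},\cV_{k-1})\minor(\cT_k,\cV_k)$ is exactly Theorem~\ref{treedec}(iv) with $\cN'=\cN_{k-1}$ inside $\cN=\cN_k$. For~(v): $\cN_k$ is $\Aut(G)$-invariant, so $\Aut(G)$ acts on $\cT_k=\cT(\cN_k)$ as a group of automorphisms by Corollary~\ref{cor_tn_Gamma}.

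I expect this to be routine, the construction of the~$\cN_k$ in Theorem~\ref{thm_kinsep_sys} having already done the real work. The only points needing any care are the uniqueness in~(i) and the ``which side of the edge'' step in~(ii), both of which rest on combining Lemma~\ref{lem_treeedge_sep} with the observation that a $k$-inseparable set is too large to be swallowed by an adhesion set of order at most~$k$. One degenerate case should be noted: if $\cN_k=\es$ for some~$k$ (as happens on sufficiently small graphs), one lets $\cT_k$ be the one-node tree with part~$V(G)$, and then (i)--(v) all hold trivially for that~$k$.
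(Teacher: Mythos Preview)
Your proof is correct and follows essentially the same route as the paper's: define $(\cT_k,\cV_k)$ from the systems~$\cN_k$ of Theorem~\ref{thm_kinsep_sys} via Sections~\ref{sec_tree}--\ref{sec_td}, then read off (iii)--(v) from Theorem~\ref{treedec} and Corollary~\ref{cor_tn_Gamma}, and derive (i)--(ii) from the fact that $\cN_k$ has order at most~$k$ and distinguishes the $K$-robust $k$-blocks. The only cosmetic difference is that where the paper cites the standard tree-decomposition lemma \cite[Lemma~12.3.1]{DiestelBook10noEE} for~(i), you argue directly via Lemma~\ref{lem_treeedge_sep}; and you explicitly treat the degenerate case $\cN_k=\es$, which the paper leaves implicit.
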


\begin{proof}
Consider the nested \sys\ $\cN_K$ given by Theorem~\ref{thm_kinsep_sys}. As in the proof of that theorem, let $\cN_k$ be the subsystem of $\cN_K$ consisting of its separations of order at most~$k$. By Theorem~\ref{thm_kinsep_sys}, $\cN_K$ is $\Aut(G)$-invariant, so this is also true for all $\cN_k$ with $k < K$.%
   \COMMENT{}

Let $(\cT_k,\cV_k)$ be the tree-decomposition associated with $\cN_k$ as in Section~\ref{sec_td}. Then (v) holds by Corollary~\ref{cor_tn_Gamma}, (iii)~and (iv) by Theorem~\ref{treedec}\,(iii) and~(iv). By~(iii) and \cite[Lemma 12.3.1]{DiestelBook10noEE}, any $k$-inseparable set is contained in a unique part of $(\cT_k,\cV_k)$, giving~(i). By \eqref{disting} in the proof of Theorem~\ref{thm_kinsep_sys}, $\cN_k$~distinguishes every two $K$-robust $k$-blocks, which implies (ii) by (i) and Theorem~\ref{treedec}\,(iii).
\end{proof}

From Theorem~\ref{thm_kinsep_treedec} we can finally deduce the two results announced in the Introduction, Theorems 1 and~2.

Theorem~1 follows by taking as $K$ the integer $k$ given in Theorem~1, and then considering the decomposition $(\cT_k,\cV_k)$ for $k=K$. Indeed, consider two $k$-blocks $b_1,b_2$ that Theorem~1 claims are distinguished efficiently by $(\cT_k,\cV_k)$. By Theorem~\ref{thm_kinsep_treedec}\,(ii), $b_1$~and $b_2$ lie in different parts of $(\cT_k,\cV_k)$. Let $k' := \kappa(b_1,b_2)\le k$. By Lemma~\ref{robustii}\,(i),%
   \COMMENT{}
   the $k'$-blocks $b'_1\supseteq b_1$ and $b'_2\supseteq b_2$ are again $K$-robust. Hence by Theorem~\ref{thm_kinsep_treedec}\,(ii) for~$k'$, they lie in different parts of $(\cT_{k'},\cV_{k'})$. Consider an adhesion set of $(\cT_{k'},\cV_{k'})$ on the path in $\cT_{k'}$ between these parts. By Theorem~\ref{thm_kinsep_treedec}\,(iii), this set has size at most~$k'$, and by Theorem~\ref{thm_kinsep_treedec}\,(iv) it is also an adhesion set of $(\cT_k,\cV_k)$ between the two parts of $(\cT_k,\cV_k)$ that contain $b_1$ and~$b_2$.

Theorem~2 follows from Theorem~\ref{thm_kinsep_treedec} for $K=|G|$; recall that robust $k$-blocks are $K$-robust for $K=|G|$.

\section{Outlook}

There are two types of question that arise from the context of this paper, but which we have not addressed.

The first of these concerns its algorithmic aspects. How hard is it

\begin{itemize}
\itemsep0em
\item to decide whether a given graph has a $k$-block;
\item to find all the $k$-blocks in a given graph;
\item to compute the canonical tree-decompositions whose existence we have shown?
\end{itemize}
Note that our definitions leave some leeway in answering the last question. For example, consider a graph $G$ that consists of two disjoint complete graphs $K,K'$ of order~10 joined by a long path~$P$. For $k=5$, this graph has only two $k$-blocks, $K$~and~$K'$. One \td\ of~$G$ that is invariant under its automorphisms has as parts the graphs $K,K'$ and all the $K_2$s along the path~$P$, its decomposition tree again being a long path. This \td\ is particularly nice also in that it also distinguishes the $\ell$-blocks of $G$ not only for $\ell=k$ but for all $\ell$ such that $G$ has an $\ell$-block, in particular, for $\ell=1$.

However if we are only interested in $k$-blocks for $k=5$, this decomposition can be seen as unnecessarily fine in that it has many parts containing no $k$-block. We might, in this case, prefer a \td\ that has only two parts, and clearly there is such a \td\ that is invariant under~$\Aut(G)$, of adhesion 1 or~2 depending on the parity of~$|P|$.

This~\td, however, is suboptimal in yet another respect: we might prefer decompositions in which any part that does contain a $k$-block contains nothing but this $k$-block. Our first decomposition satisfies this, but there is another that does too while having fewer parts: the path-decomposition into three parts whose middle part is~$P$ and whose leaf parts are $K$ and~$K'$.

We shall look at these possibilities and associated algorithms in more detail in~\cite{CDHH13CanonicalAlg, CDHH13CanonicalParts}. However we shall not make an effort to optimize these algorithms from a complexity point of view, so the above three questions will be left open.
%\medbreak

Since our \td s are canonical, another obvious question is whether they, or refinements, can be used to tackle the graph isomorphism problem. Are there natural classes of graphs for which we can
\begin{itemize}
\itemsep0em
\item describe the parts of our canonical \td s in more detail;
\item use this to decide graph isomorphism for such classes in polynomial time?
\end{itemize}

Another broad question that we have not touched upon, not algorithmic, is the following. Denote by $\beta(G)$ the greatest integer~$k$ such that $G$ has a $k$-block (or equivalently: has a $k$-inseparable set of vertices). This seems to be an interesting graph invariant; for example, in a network $G$ one might think of the nodes of a $\beta(G)$-block as locations to place some particularly important servers that should still be able to communicate with each other when much of the network has failed.

From a mathematical point of view, it seems interesting to ask how $\beta$ interacts with other graph invariants. For example, what average degree will force a graph to contain a $k$-block for given~$k$? What can we say about the structure of graphs that contain no $k$-block but have large tree-width?

Some preliminary results in this direction are obtained in~\cite{ForcingBlocks}, but even for the questions we address we do not have optimal results.

\section*{Acknowledgement}
Many ideas for this paper have grown out of an in-depth study of the treatise~\cite{DunwoodyKroenArXiv} by Dunwoody and~Kr\"on, which we have found both enjoyable and inspiring.

\small
\bibliographystyle{plain}
\bibliography{collective}

\end{document}